\title[Smashing localizations of rings of weak dimension one]%
{Smashing localizations of rings of weak global dimension at most one}
\subjclass[2010]{Primary: 13D09, 18E35; Secondary: 16E45}
\keywords{Smashing localization, homological epimorphism, Telescope Conjecture}
\date{\today}
\thanks{First named author supported by grant CPDA105885/10 of Padova University ``Differential graded categories'' and by Progetto di Eccellenza Fondazione Cariparo ``Algebraic Structures and their applications''. Second named author supported by grant GA~\v{C}R P201/12/G028 ``Eduard \v{C}ech Institute for Algebra, Geometry and Physics'' from Czech Science Foundation.}
\author[S. Bazzoni]{Silvana Bazzoni}
\address[Silvana Bazzoni]{%
Dipartimento di Matematica \\
Universit\`a di Padova \\
via Trieste 63, I-35121 Padova - Italy}
\email{bazzoni@math.unipd.it}
\author[J. \v{S}\v{t}ov\'{\i}\v{c}ek]{Jan \v{S}\v{t}ov\'{\i}\v{c}ek}
\address[Jan \v{S}\v{t}ov\'{\i}\v{c}ek]{%
Charles University in Prague, Faculty of Mathematics and Physics \\
Department of Algebra \\
Sokolovsk\'a 83, 186 75 Praha 8, Czech Republic
}
\email{stovicek@karlin.mff.cuni.cz}
\renewcommand{\iff}{if and only if }
\newcommand{\st}{such that }
\newcommand{\wrt}{with respect to }
\newcommand{\la}{\longrightarrow}
\newcommand{\mapr}[1]{\overset{#1}\longrightarrow}
\newcommand{\dd}{\colon}
\newcommand{\ph}{\varphi}
\newcommand{\dif}{\partial}
\newcommand{\wfs}{weak factorization system }
\newcommand{\wfss}{weak factorization systems }
\newcommand{\inv}{^{-1}}
\newcommand{\bbN}{\mathbb{N}}
\newcommand{\bbZ}{\mathbb{Z}}
\newcommand{\bbQ}{\mathbb{Q}}
\newcommand{\ifr}{\mathfrak{i}}
\newcommand{\jfr}{\mathfrak{j}}
\newcommand{\m}{\mathfrak{m}}
\newcommand{\n}{\mathfrak{n}}
\newcommand{\p}{\mathfrak{p}}
\newcommand{\q}{\mathfrak{q}}
\newcommand{\Spec}{\operatorname{Spec}} 
\newcommand{\Max}{\operatorname{Max}}    
\newcommand{\iSpec}{\operatorname{iSpec}}  
\newcommand{\Inter}{\operatorname{Inter}}  
\newcommand{\Hom}{\operatorname{Hom}}
\newcommand{\HOM}{\mathcal{H}\mathnormal{om}}
\newcommand{\End}{\operatorname{End}}
\newcommand{\Ext}{\operatorname{Ext}}
\newcommand{\Tor}{\operatorname{Tor}}
\newcommand{\Ker}{\operatorname{Ker}}
\newcommand{\Img}{\operatorname{Im}}
\newcommand{\wgldim}{\operatorname{w.gl.dim}}
\newcommand{\unit}{\mathds{1}}
\newcommand{\Supp}{\mathrm{Supp}\,}
\newcommand{\A}{\mathcal{A}}
\newcommand{\C}{\mathcal{C}}
\newcommand{\D}{\mathcal{D}}
\newcommand{\I}{\mathcal{I}}
\newcommand{\clL}{\mathcal{L}}
\newcommand{\R}{\mathcal{R}}
\newcommand{\clS}{\mathcal{S}}
\newcommand{\T}{\mathcal{T}}
\newcommand{\X}{\mathcal{X}} 
\newcommand{\Y}{\mathcal{Y}}
\newcommand{\Z}{\mathcal{Z}}  
\newcommand{\Modr}[1]{\mathrm{Mod}\textrm{-}{#1}}
\newcommand{\ModR}{\mathrm{Mod}\textrm{-}R}
\newcommand{\modR}{\mathrm{mod}\textrm{-}R}
\newcommand{\Dga}[1]{\mathrm{Dga}({#1})}
\newcommand{\Ab}{\mathrm{Ab}}
\newcommand{\op}{^\textrm{op}}
\newcommand{\Der}[1]{\mathbf{D}({#1})}
\newcommand{\Cpx}[1]{\mathbf{C}({#1})}
\newcommand{\cof}{\mathrm{Cof}}
\newcommand{\we}{\mathrm{W}}
\newcommand{\fib}{\mathrm{Fib}}
\newcommand{\Ho}{\operatorname{Ho}}
\newcommand{\Rder}[1]{\mathbf{R}{#1}}
\newcommand{\Lotimes}{\otimes^{\mathbf{L}}}
\newcommand{\RHom}{\Rder\Hom}
\theoremstyle{plain}
\newtheorem{thm}{Theorem}[section]
\newtheorem{lem}[thm]{Lemma}
\newtheorem{prop}[thm]{Proposition}
\newtheorem{cor}[thm]{Corollary}
\theoremstyle{definition}
\newtheorem{defn}[thm]{Definition}
\newtheorem{constr}[thm]{Construction}
\newtheorem{nota}[thm]{Notation}
\theoremstyle{remark}
\newtheorem{rem}[thm]{Remark}
\newtheorem{expl}[thm]{Example}
\begin{document}

\begin{abstract}
We show for a ring $R$ of weak global dimension at most one that there is a bijection between the smashing subcategories of its derived category and the equivalence classes of homological epimorphisms starting in $R$. If, moreover, $R$ is commutative, we prove that the compactly generated localizing subcategories correspond precisely to flat epimorphisms. We also classify smashing localizations of the derived category of any valuation domain, and provide an easy criterion for the Telescope Conjecture (TC) for any commutative ring of weak global dimension at most one. As a consequence, we show that the TC holds for any commutative von Neumann regular ring $R$, and it holds precisely for those Pr\"ufer domains which are strongly discrete.
\end{abstract}

\maketitle

\setcounter{tocdepth}{1}
\tableofcontents

\section*{Introduction}

If $R$ is a ring and $\Der R$ its unbounded derived category, it is usually hopeless to try to understand all objects of $\Der R$. A fruitful and recently extensively studied approach is to try to understand the inner structure of $\Der R$ through various localizations of $\Der R$. As demonstrated by our present paper and also elsewhere, triangulated localization theory provides a fascinating natural meeting point for abstract homotopy theory, algebraic geometry, homological algebra, module theory and other fields.

However, only compactly generated localizations of $\Der R$ with $R$ commutative are well understood in general. In fact, the classification started with results by Devinatz, Hopkins and Smith~\cite{DHS88}, and Neeman~\cite{Nee92}, and was finished by Thomason~\cite{Th97}. These results have been recently considerably extended and further interesting applications found by Balmer~\cite{Bal05} and Benson, Iyengar and Krause~\cite{BIK08,BIK11}. For more general localizations, the situation remains not so clear. To understand all Bousfield localizations of $\Der R$ is generally an extremely difficult problem as illustrated in~\cite{Nee00-oddball,DwPa08,Ste14}.

However, there is an intermediate class of so-called smashing localizations---those where the localization functor is given by tensoring. In contrast to the present state of art in stable homotopy theory, in the case of derived categories of rings of weak global dimension $\le 1$ (that is, rings whose all modules have flat dimension $\le 1$) this is a perfectly tractable class. One of our main results is a complete classification of smashing localizations of $\Der R$ for a valuation domain $R$. This seems to give one of a very few positive results for non-compactly generated localizations of $\Der R$ with $R$ non-noetherian.

Of course, smashing localizations are also intimately related to the Telescope Conjecture from the works of Bousfield and Ravenel~\cite{Bou79,Rav84}. The conjecture asks whether every smashing localization is compactly generated. In fact, it makes more sense to ask whether a particular triangulated category satisfies the Telescope Conjecture as there are derived categories which do not have this property~\cite{Kel94-smash}. Although in the original setting, for the stable homotopy category, the answer seems still unclear, for $\Der R$ with $\wgldim R \le 1$ we are sometimes even able to provide a list of all smashing localizations which are not compactly generated. Our hope is that this new light shed on the problem will foster further research and in the end leads to better understanding of triangulated localizations.

\smallskip

Let us briefly list the highlights of the present paper.

\begin{enumerate}
\item In Theorem~\ref{thm:smashing-classif} we explain the reason for the assumption of weak global dimension $\le 1$. In general, smashing localizations of $\Der R$ for $R$ not necessarily commutative are in bijection with equivalence classes of homological epimorphisms in the homotopy category of dg algebras. If $\wgldim R \le 1$, it suffices to study classical homological epimorphisms of rings. This often allows one to study smashing localizations in the module rather than in the derived category.

\item If, moreover, $R$ is commutative, we will show in Theorem~\ref{thm:flat-epi} that compactly generated localizations correspond precisely to flat ring epimorphisms $f\dd R \to S$.

\item In Theorem~\ref{thm:hom-epi-vd} we classify all smashing localizations of $\Der R$ with $R$ a valuation domain. We will show that knowing $\Spec R$ as a topological space is in general not enough to determine the lattice of smashing localizations, but knowing in addition which prime ideals are idempotent suffices. In particular, we immediately see which of the localizations are flat and whether the Telescope Conjecture holds for $\Der R$.

\item For commutative rings $R$ of weak global dimension $\le 1$ we are able to combine (2) and (3) in Theorem~\ref{thm:tc} to get a simple criterion for the Telescope Conjecture for $\Der R$. In particular we show that the conjecture holds for any commutative von Neumann regular ring $R$, generalizing a result from~\cite{Ste14}.
\end{enumerate}

\subsection*{Acknowledgments}

The second named author would like to thank Pavel P\v{r}\'\i{}hoda for many interesting discussions, during which we were among others able to prove Theorem~\ref{thm:hom-epi-vd} for valuation domains with finite Zariski spectrum using a completely different method than the one presented here. Although unfortunately none of this was in the end used in the present text, his help was very important to finish this work.

\section{Smashing localization of triangulated categories}
\label{sec:smash}

Let $\T$ be a triangulated category with small coproducts and let us denote the suspension functor by $\Sigma$. We refer to~\cite{Nee01} for the definitions and abstract theory.

Smashing localizations and smashing subcategories arise naturally if, moreover, $\T$ admits a closed symmetric monoidal structure which is compatible with the triangulated structure in the sense of~\cite[Appendix A.2]{HPS97}. Such categories are also called tensor triangulated; see~\cite{BaFa11}. This situation arises in particular when one considers the stable homotopy category of spectra with the smash product, or the derived category $\Der R$ of a commutative ring together with the usual derived tensor product $\Lotimes_R$. 

Here, however, we shall mostly focus on a different branch of the theory of smashing localizations which does not require any monoidal structure on $\T$. Our main references are Pauksztello~\cite{Pauk09} and Nicol\'as and Saor\'\i{}n~\cite{NS09}.

\subsection{Bousfield localization}
\label{subsec:Bousfield}

When $\T$ is a triangulated category with coproducts, we are often interested only in Verdier quotients $\T/\X$ (see~\cite[Ch. 2]{Nee01}) \st the canonical functor $q^*\dd \T \to \T/\X$ preserves coproducts. Equivalently, the class $\X$ is localizing in the following sense.

\begin{defn} \label{def:localizing}
A full subcategory $\X$ of $\T$ is \emph{localizing} if it is triangulated and closed under set indexed coproducts.
\end{defn}

Note that such an $\X$ is automatically closed under direct summands by~\cite[1.6.8]{Nee01}. In this situation it often happens that $q^*$ has a right adjoint $q_*\dd \T/\X \to \T$. In fact, the existence of $q_*$ is often equivalent to the fact that $\T/\X$ has small homomorphism spaces, so that it is a category in the usual sense (see for instance~\cite[Lemma 3.5]{Kr00} or~\cite[Proposition 9.1.19]{Nee01}).

The existence of the right adjoint $q_*$ to the localization functor $q^*$ is equivalent to the existence of a right adjoint functor $i^!$ to the inclusion functor $i_*\dd \X\to \T$. This is further equivalent to the existence of a so-called Bousfield localization functor $L\colon \T\to \T$ such that $\Ker L=\X$; see for instance~\cite[Proposition 4.9.1]{Kr10}.

\begin{defn} \label{def:Bousfield}
A \emph{Bousfield localization functor} is a triangulated endofunctor $L\colon \T\to \T$ together with a natural transformation $\eta:Id_{\T}\la L$ with $L\eta :L \la L^2$ being invertible and $L\eta = \eta L$. The objects in the essential image of $L$ are called \emph{$L$-local} and the objects in $\Ker L$ are called \emph{$L$-acyclic}.
\end{defn}

A very convenient way to describe a Bousfield localization functor $L$ is via a triangulated analogue of a torsion pair. If $L\dd \T \to \T$ is such a functor, then the pair $(\X,\Y) = (\Ker L,\Img L)$ of full subcategories of $\T$ enjoys the following properties:

\begin{enumerate}
\item $\X = \Sigma\X$ and $\Sigma \Y = \Y$;
\item $\T(X,Y) = 0$ for all $X \in \X$ and $Y \in \Y$;
\item For each $W \in \T$, there is a triangle of the form
\[ X \la W \la Y \la \Sigma X \]
with $X \in \X$ and $Y \in \Y$.
\end{enumerate}

It is well known that the map $X \to W$ in a triangle as in (3) is always an $\X$-coreflection, and the map $W \to Y$ is a $\Y$-reflection. Moreover, $\X$ and $\Y$ are triangulated subcategories of $\T$ and determine each other: $\X = {^\perp\Y}$ and $\Y = \X^\perp$. Here we use the following notation for so-called \emph{perpendicular classes} to a class of objects $\C \subseteq \T$:
\begin{align*}
\C^{\perp} &=
\{X\in \T\mid \Hom_{\T}(\Sigma^nC,X)=0 \mathrm{\ for\ all\ } C\in \C \mathrm{\ nad\ }n\in \bbZ \},
\\
^{\perp }\C &=
\{X\in \T\mid \Hom_{\T}(X,\Sigma^nC)=0 \mathrm{\ for\ all\ } C\in \C \mathrm{\ nad\ }n\in \bbZ \}.
\end{align*}

It is also a standard fact (see for instance~\cite[Ch. 9]{Nee01} or~\cite[\S4.9]{Kr10}) that on the other hand a pair $(\X,\Y)$ satisfying (1)--(3) determines a Bousfield localization functor $L$ with $\Ker L = \X$, and such an $L$ is unique up to a suitably defined natural equivalence. More precisely, if $L'$ is another such functor together with $\eta':Id_{\T}\la L'$, then there is a unique natural equivalence $\xi\dd L \to L'$ \st $\eta' = \xi\eta$.

Needless to say that this setup has been observed in other situations. Bondal and Orlov~\cite{BO95} call such $(\X,\Y)$ semiorthogonal decompositions of $\T$. The pair $(\X,\Y)$ can also be viewed as a $t$-structure with a trivial heart in the sense of Beilinson, Bernstein and Deligne \cite{BBD}.

\subsection{Smashing localization and TTF triples}
\label{subsec:TTF-triples}

If $\T$ admits a closed symmetric monoidal structure $(\T,\otimes,\unit)$, one may ask under which conditions a Bousfield localization functor is equivalent to $- \otimes Y$ for some $Y \in \T$. This in particular happens for localizations generated by a small set of compact objects~\cite[Theorem 3.3.3]{HPS97}:

\begin{defn} \label{def:comp-gen-loc}
An object $C \in \T$ of an additive category with arbitrary small coproducts is called \emph{compact}, if the natural homomorphism
\[ \coprod_{i \in I} \T(C,Y_i) \la \T(C, \coprod_{i \in I} Y_i) \]
is an isomorphism for any small collection $(Y_i \mid i \in I)$ of objects of $\T$.

A Bousfield localization functor is called \emph{compactly generated} if there is a small set $\C \subseteq \T$ of compact objects such that the class of $L$-local objects is equal to $\C^\perp$. Equivalently we may require that the set of $L$-acyclic objects is the smallest localizing subcategory of $\T$ containing~$\C$; see~\cite[Lemma 1.7]{Nee92-loc}.
\end{defn}

One feature of a localization functor of the form $L = - \otimes Y$ is that $L$ preserves coproducts in $\T$. As the latter property rather often characterizes localizations of the form $L = - \otimes Y$ (see~\cite[Definition 3.3.2]{HPS97}), it was taken by Krause~\cite{Kr00,Kr05-telescope} as the definition of a smashing localization in the absence of a tensor product:

\begin{defn} \label{def:smashing}
A Bousfield localization functor $L\dd \T \to \T$ is called \emph{smashing} if it preserves coproducts. A localizing class $\X \subseteq \T$ is called \emph{smashing} if it is the class of acyclic objects for a smashing localization functor.
\end{defn}

If we do not wish to refer to the localization functor explicitly, we can use the following lemma:

\begin{lem} \label{lem:smash-coprod}
Let $\X \subseteq \T$ be a localizing subcategory. Then the following are equivalent:
\begin{enumerate}
\item $\X$ is smashing.
\item The inclusion functor $i_*\dd \X \to \T$ admits a right adjoint $i^!$ and the perpendicular class $\X^\perp$ is closed under small coproducts.
\end{enumerate}
\end{lem}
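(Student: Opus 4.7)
My plan is to use the correspondence between Bousfield localization functors and pairs $(\X,\Y)$ satisfying properties (1)--(3) of Section~\ref{subsec:Bousfield}, combined with the fact that such pairs are determined by the triangle of reflection/coreflection. The first step, common to both implications, is to recall that the existence of a right adjoint $i^!$ to the inclusion $i_*\dd \X \to \T$ is equivalent to the existence of a Bousfield localization functor $L$ with $\Ker L = \X$, in which case $\Img L = \X^\perp$.

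For (1)$\Rightarrow$(2): Assume $L$ is smashing. Then in particular $L$ is a Bousfield localization functor, so $i^!$ exists. To show $\X^\perp = \Img L$ is closed under coproducts, take any family $(Y_j)_{j\in J}$ of $L$-local objects, i.e.\ such that each unit $\eta_{Y_j}\dd Y_j \to LY_j$ is an isomorphism. Because $L$ preserves coproducts and $\eta$ is natural, $\eta_{\coprod Y_j}$ is (isomorphic to) the coproduct of the $\eta_{Y_j}$, hence an isomorphism, so $\coprod Y_j$ is $L$-local.

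For (2)$\Rightarrow$(1): Assume $i^!$ exists and $\X^\perp$ is closed under coproducts. Then $L$ exists and, for each $W\in \T$, we have the localization triangle
\[
 i_*i^! W \la W \la LW \la \Sigma i_*i^! W
\]
with $i_*i^! W \in \X$ and $LW \in \X^\perp$. Given a family $(W_j)_{j\in J}$ in $\T$, take the coproduct of these triangles to obtain a distinguished triangle
\[
 \coprod_j i_*i^! W_j \la \coprod_j W_j \la \coprod_j LW_j \la \Sigma\coprod_j i_*i^! W_j.
\]
Since $\X$ is localizing, the first term lies in $\X$; by the hypothesis that $\X^\perp$ is closed under coproducts, the third term lies in $\X^\perp$. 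Hence this triangle has the form of the localization triangle for $\coprod_j W_j$, and by the uniqueness of such decompositions (properties (1)--(3) in Section~\ref{subsec:Bousfield}) we obtain a canonical isomorphism $L\bigl(\coprod_j W_j\bigr) \cong \coprod_j LW_j$, so $L$ preserves coproducts.

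I do not expect any real obstacle here; the only subtlety is the implicit appeal to functoriality of the localization triangle to upgrade the object-level isomorphism to a natural one, and the fact that coproducts of distinguished triangles are distinguished, which is standard in a triangulated category with coproducts.
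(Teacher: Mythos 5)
Your proof is correct, and it is the standard argument that the paper is implicitly deferring to by citing~\cite[Definition~3.3.2]{HPS97}: translate $\X$ smashing into the existence of a Bousfield localization $L$ with $\Ker L = \X$ and $\Img L = \X^\perp$, then in one direction push the unit $\eta$ through coproducts using naturality of $\eta$ and $L$ preserving coproducts, and in the other direction take coproducts of localization triangles (using that coproducts of distinguished triangles are distinguished) and invoke the uniqueness of the $(\X,\X^\perp)$-decomposition. The only point worth spelling out slightly more is the very last step of (2)$\Rightarrow$(1): the uniqueness of the decomposition gives an isomorphism $\coprod_j LW_j \cong L(\coprod_j W_j)$ intertwining $\coprod_j \eta_{W_j}$ with $\eta_{\coprod_j W_j}$, and one then checks, by precomposing with $\eta_{W_j}$ and using that maps out of $LW_j$ into $\Y$-objects are determined by their restriction along $\eta_{W_j}$, that this isomorphism agrees with the canonical comparison map $\coprod_j LW_j \to L(\coprod_j W_j)$; you flagged this as the remaining subtlety and it indeed goes through.
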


\begin{proof}
See the argument in~\cite[Definition 3.3.2]{HPS97}.
\end{proof}

If now $L$ is a smashing localizing functor and $\Y = \Img L$ is the class of $L$-local objects, it is again a localizing class. This suggests that there should exist another Bousfield localization $L'\dd \T \to \T$ \st $\Y = \Ker L'$. This is indeed often the case, assuming a technical condition on $\T$.
A sufficient one is a so called Brown representability condition. That is, we require that every cohomological functor $F\dd \T\op \to \Ab$ (in the sense of~\cite[Remark 1.1.9]{Nee01}) which preserves small products is isomorphic to $\T(-,E)$ for an object $E \in \T$.
Note that any compactly generated or well generated triangulated category in the sense of~\cite{Nee01,Kr10} has this property. In particular, the unbounded derived category $\Der R$ of any ring $R$ (commutative or not), or more generally of any dg algebra $R$, is an example, see~\cite[\S8.1.3]{Kel98} and \cite[\S4.2 and \S5.2]{Kel94}.

Now we can give the characterization, which closely relates smashing localizations to recollements~\cite[\S4.13]{Kr10} (see also Remark~\ref{rem:recollement}).

\begin{defn} \label{def:ttf}
A \emph{torsion-torsion-free triple} (\emph{TTF triple} for short) on a triangulated category $\T$ is a triple $(\X, \Y, \Z)$ of full subcategories of $\T$ such that both $(\X,\Y)$ and $(\Y,\Z)$ enjoy properties (1)--(3) stated in~\S\ref{subsec:Bousfield}. Equivalently, $(\X,\Y)$ and $(\Y,\Z)$ both determine $t$-structures on $\T$ in the sense of~\cite{BBD}.
\end{defn}

\begin{prop} \label{prop:recollement}
Let $\T$ be a triangulated category with small coproducts \st every cohomological functor which preserves small products is representable. Let $L\dd \T \to \T$ be a Bousfield localization functor and $\X = \Ker L$. Then the following are equivalent:
\begin{enumerate}
\item $\X$ is smashing;
\item There is a TTF triple $(\X, \Y, \Z)$ on $\T$.
\end{enumerate}
\end{prop}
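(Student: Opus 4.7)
The direction $(2)\Rightarrow(1)$ is immediate from Lemma~\ref{lem:smash-coprod}: given a TTF triple $(\X,\Y,\Z)$, the semiorthogonal decomposition $(\X,\Y)$ already supplies a right adjoint $i^!$ to the inclusion $i_*\dd\X\to\T$, so it only remains to check that $\X^\perp=\Y$ is closed under small coproducts. For any family $\{Y_i\}\subseteq\Y$ and any $Z\in\Z$ one computes $\T(\coprod Y_i,Z)=\prod\T(Y_i,Z)=0$, so $\coprod Y_i\in{}^{\perp}\Z=\Y$ by the second semiorthogonal decomposition $(\Y,\Z)$.

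For the converse, let $L\dd\T\to\T$ denote the Bousfield localization functor associated to $(\X,\Y)$, so $\Ker L=\X$ and the essential image is $\Y=\X^\perp$; the smashing hypothesis says exactly that $L$ preserves small coproducts. By the dictionary of Subsection~\ref{subsec:Bousfield} it suffices to produce, for every $W\in\T$, a triangle $Y_W\to W\to Z_W\to\Sigma Y_W$ with $Y_W\in\Y$ and $Z_W\in\Y^\perp$. The object $Y_W$ will come from the Brown representability hypothesis on $\T$ applied to the functor
\[ F_W\dd\T\op\to\Ab, \qquad F_W(T)=\T(LT,W). \]
Here $F_W$ is cohomological because $L$ is triangulated, and it preserves products because $L$ preserves coproducts while $\T(-,W)$ converts coproducts into products. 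Brown representability then gives $F_W\cong\T(-,Y_W)$ naturally for some $Y_W\in\T$, and the vanishing $F_W(X)=0$ for $X\in\Ker L=\X$ forces $Y_W\in\X^\perp=\Y$.

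To assemble the triangle, restrict the natural isomorphism $\T(-,Y_W)\cong F_W$ to $\Y$, where $LT\cong T$ canonically; this yields $\T(Y,Y_W)\cong\T(Y,W)$ for $Y\in\Y$. Evaluating at $Y=Y_W$ and tracing $\mathrm{id}_{Y_W}$ produces a morphism $f_W\dd Y_W\to W$ that, by Yoneda, realizes the isomorphism by post-composition. Completing $f_W$ to a triangle $Y_W\to W\to Z_W\to\Sigma Y_W$ and applying $\T(Y,-)$ for $Y\in\Y$, the isomorphisms above extended by suspension (legitimate since $\Sigma\Y=\Y$) force $\T(Y,\Sigma^n Z_W)=0$ for every $n\in\bbZ$ via the long exact sequence, so $Z_W\in\Y^\perp$. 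Setting $\Z=\Y^\perp$ delivers the semiorthogonal decomposition $(\Y,\Z)$ and the desired TTF triple $(\X,\Y,\Z)$. The delicate point — and the sole step where the smashing hypothesis is genuinely used — is the verification that $F_W$ preserves products, which would fail without $L$ commuting with coproducts.
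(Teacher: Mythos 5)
Your proof is correct and follows the same Brown representability strategy as the paper's cited reference (Krause, Proposition 5.5.1): the nontrivial direction $(1)\Rightarrow(2)$ is handled by representing the product-preserving cohomological functor $T\mapsto\T(LT,W)$, observing the representing object lies in $\Y=\X^\perp$, and completing the resulting coreflection map to a triangle whose cone lies in $\Y^\perp$. The reverse direction via closure of ${}^\perp\Z$ under coproducts and Lemma~\ref{lem:smash-coprod} is also the standard argument.
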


\begin{proof}
See the proof of~\cite[Proposition 5.5.1]{Kr10}.
\end{proof}

\section{Homological epimorphisms for dg algebras}
\label{sec:hom-epi-dga}

Next we shall consider smashing localizations at the level of model categories, which for us always means dg algebras and dg modules over them (see Appendix~\ref{sec:homotopy-dg} for details). The advantage is that building on Pauksztello's definition of a homological epimorphism of dg algebras and the results in~\cite{NS09}, a smashing localization turns out to be always given by a certain tensor product, even if the triangulated category in question is not tensor triangulated. Later on we will show that for derived categories of rings of weak global dimension at most one everything simplifies to the notion of classical (homological) epimorphisms of ordinary rings.

To start with, note that every morphism $A \to C$ in the homotopy category $\Ho\Dga k$ of dg algebras over $k$ is by Remark~\ref{rem:morphisms-in-model-cat} and Proposition~\ref{prop:model-dgas} represented by a fraction
\[
\xymatrix{
& B \ar[dl]_\sigma \ar[dr]^f \\
A && C,
}
\]
where $\sigma\dd B \to A$ is a surjective quasi-isomorphism of dg algebras and $B$ is cofibrant. Generalizing Pauksztello's~\cite[Definition 3.10]{Pauk09}, we say that:

\begin{defn} \label{def:hom-epi-dga}
A morphism $f\sigma\inv\dd A \to C$ in $\Ho\Dga k$ (with $\sigma\dd B \to A$ and $f\dd B \to C$ as above) is a \emph{homological epimorphism} if the canonical map $C \Lotimes_B C \to C$ coming from the multiplication morphism is a quasi-isomorphism.
%
\end{defn}

\begin{rem} \label{rem:context}
The symbol $C \Lotimes_B C$ may cause some confusion since $- \otimes_B -$ may be viewed as a functor $\Cpx B \times \Cpx{B\op} \to \Cpx k$, but also $\Cpx B \times \Cpx{B\op \otimes_k C} \to \Cpx C$ and in other similar contexts. Firstly, it turns out that it actually does not matter for Definition~\ref{def:hom-epi-dga} which of these functors we derive. Secondly, we need that $C \Lotimes_B C \to C$ is a quasi-isomorphism when we view $- \otimes_B -$ as a functor $\Cpx B \times \Cpx{B\op \otimes_k C} \to \Cpx C$ because this is the interpretation in~\cite{NS09} and yields crucial Proposition~\ref{prop:descr-dga}.
\end{rem}

Now we need to prove that our notion of a homological epimorphism is well defined in the following sense:

\begin{lem} \label{lem:hom-epi-dga}
The definition of a homological epimorphism in $\Ho\Dga k$ is independent of the particular choice of the representing fraction $f\sigma\inv$.
\end{lem}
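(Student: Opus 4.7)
The plan is to reduce, in stages, any two representing fractions for the morphism $A \to C$ in $\Ho\Dga k$ to direct applications of Lemma~\ref{lem:dga-change}. Given fractions $(\sigma\dd B \to A, f\dd B \to C)$ and $(\tau\dd B' \to A, g\dd B' \to C)$ with $\sigma, \tau$ trivial fibrations and $B, B'$ cofibrant, the first step is to invoke the lifting axiom of the model structure from Proposition~\ref{prop:model-dgas}: cofibrancy of $B$ together with $\tau$ being a trivial fibration yield $h\dd B \to B'$ in $\Dga k$ with $\tau h = \sigma$, and two-out-of-three forces $h$ to be a quasi-isomorphism. The equality $f\sigma\inv = g\tau\inv$ in $\Ho\Dga k$ then translates into $f = gh$ in $\Ho\Dga k$, so $f$ and $gh$ are homotopic in $\Dga k$ (as $B$ is cofibrant and $C$ is fibrant).

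The second step is to verify that the homological epimorphism condition is invariant under homotopy of the numerator map. A homotopy between $f_0, f_1\dd B \to C$ is realized as a dg algebra map $H\dd B \otimes I \to C$ out of a cylinder object for $B$, and both end inclusions $i_\epsilon\dd B \to B \otimes I$ are trivial cofibrations, hence in particular quasi-isomorphisms of dg algebras. Applying Lemma~\ref{lem:dga-change} to each $i_\epsilon$ with $X = Y = C$ (viewed as a $B \otimes I$-module via $H$) yields, for $\epsilon = 0, 1$, a natural isomorphism $C \Lotimes_B C \cong C \Lotimes_{B \otimes I} C$ in $\Der k$, where the left-hand side uses the $B$-structure on $C$ coming from $f_\epsilon = H i_\epsilon$. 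Unwinding the proof of Lemma~\ref{lem:dga-change}, these isomorphisms intertwine the multiplication maps down to $C$, so the conditions for $\epsilon = 0$ and $\epsilon = 1$ both reduce to the single condition over $B \otimes I$ and hence coincide.

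Once this is established, one replaces $f$ by $gh$ and applies Lemma~\ref{lem:dga-change} once more to the quasi-isomorphism $h\dd B \to B'$, obtaining a natural isomorphism $C \Lotimes_B C \cong C \Lotimes_{B'} C$ in $\Der k$ compatible with the multiplications to $C$; this establishes the desired independence of the homological epimorphism condition from the choice of fraction.

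The main obstacle is verifying the compatibility of the isomorphisms from Lemma~\ref{lem:dga-change} with the multiplications. At the level of a cofibrant resolution $P \to C$ in $\Cpx B$, one has $C \Lotimes_B C = P \otimes_B C$ and $C \Lotimes_{B'} C = (P \otimes_B B') \otimes_{B'} C$, and the canonical map $p \otimes c \mapsto (p \otimes 1) \otimes c$ intertwines $p \otimes c \mapsto pc$ on both sides. The subtlety flagged in Remark~\ref{rem:context}---that the derived tensor product must take values in $\Der C$ rather than merely $\Der k$ for the multiplication to $C$ to be meaningful---is handled by choosing $P$ to be cofibrant as a $B$-$C\op$-bimodule, after which the required compatibility follows by naturality.
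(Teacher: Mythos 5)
Your proof is correct and follows essentially the same route as the paper's: you lift the identity of $B$ against the trivial fibration $\tau$ to obtain a quasi-isomorphism $h\dd B \to B'$ (the paper's $\sigma''$), deduce $f \sim gh$, realize the homotopy by a cylinder object, and apply Lemma~\ref{lem:dga-change} along $h$ and the two cylinder inclusions. Your explicit verification that the natural isomorphisms from Lemma~\ref{lem:dga-change} intertwine the multiplication maps down to $C$ fills in a point the paper leaves implicit, and is a genuine strengthening of the argument. One small caveat: Remark~\ref{rem:context} states that for Definition~\ref{def:hom-epi-dga} itself it does not matter whether $C \Lotimes_B C$ is computed in $\Der k$ or in $\Der C$---a quasi-isomorphism of the underlying $k$-complexes is detected the same way either way, and the $\Der C$ interpretation is only needed later for Proposition~\ref{prop:descr-dga}---so the bimodule-cofibrant resolution you bring in to force a $\Der C$ interpretation is unnecessary for the well-definedness being proved here, though it does no harm.
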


\begin{proof}
If $f'(\sigma')\inv$ is another fraction \st $\sigma'\colon B' \to A$ is a surjection from a cofibrant dg algebra and $f\sigma\inv = f'(\sigma')\inv$ in $\Ho\Dga k$,
%
%
there is a quasi-isomorphism $\sigma''\dd B \to B'$ of dg algebras \st $\sigma = \sigma'\sigma''$ and $f \sim f'\sigma''$ (see~Remark~\ref{rem:morphisms-in-model-cat}). That is, there is a cylinder object (Definition~\ref{def:htpy})
\[
B \amalg B \mapr{i_0+i_1} D \mapr{\tau} B
\]
and a map $h\dd D \to C$ \st $hi_0 = f$ and $hi_1 = f'\sigma''$.

Since $\tau i_0=1_B=\tau i_1$ by definition, the 2-out-of-3 property implies that also $i_0, i_1$ are quasi-isomorphisms. Thus,  Lemma~\ref{lem:dga-change} provides us with isomorphisms in $\Der k$:
\[ C \Lotimes_{B'} C \cong C \Lotimes_B C \cong C \Lotimes_D C \cong C \Lotimes_B C. \]
Here, $C$ is viewed as a dg $D$-bimodule via the map $h$.
Our final comment is regarding the double occurrence of $C \Lotimes_B C$ in the chain of isomorphisms---this is because the first copy is taken \wrt the morphism $f'\sigma''\dd B \to C$, while the second one is \wrt $f\dd B \to C$.
\end{proof}

\begin{rem} \label{rem:recollement}
Note that if $f\sigma\inv\dd A \to C$ represents a homomorphism in $\Ho\Dga k$, then the quasi-isomorphism $\sigma\dd B \to A$ induces a triangle equivalence $\sigma_*\dd \Der A \to \Der B$, whose quasi-inverse is $\sigma^* = -\Lotimes_B A\dd \Der B \to \Der A$ (see \cite[Lemma 6.1~(a)]{Kel94}). Hence we have a triangle functor
\[ \xymatrix@1{\Der C \ar[rrr]^{\sigma^*f_*} &&& \Der A}, \]
which takes the role of the functor induced by the restriction of scalars. Then~\cite[Theorem 3.9]{Pauk09} says that $\sigma^*f_*$ is fully faithful \iff $f\sigma\inv$ is a homological epimorphism in the sense of Definition~\ref{def:hom-epi-dga}.

It is not difficult to convince oneself that then $-\Lotimes_B C$ and $\RHom_{B}(C,-)$ are, respectively, left and right adjoint of the functor $\sigma^*f_*$. The situation can be described by the following diagram:  
\[
\xymatrix{
\Der C  \ar[rrr]^{\sigma^*f_*}
&&&\Der A
\ar@/^2pc/ [lll]^{\mathbb \RHom_{B}(C,-)}
\ar@/_2pc/ [lll]_{- \Lotimes_B C}
}
\]
Denoting $\X = \Ker (- \Lotimes_B C)$, $\Y = \Img \sigma^*f_*$ and $\Z = \Ker\big(\RHom_B(C,-)\big)$, where $\Ker$ and $\Img$ stand for the kernel on objects and the essential image, respectively, we obtain a torsion-torsion-free triple, that is a triple $(\X,\Y,\Z)$ in $\Der A$ as in Definition~\ref{def:ttf}.

We will also need to understand the corresponding localization functor $(L,\eta)$ (see Definition~\ref{def:Bousfield}) such that $\Ker L = \X$. Following the recipe in~\cite[\S4.9]{Kr10}, we take $L = \sigma^*f_*(-\Lotimes_B C)$ and for $\eta$ the unit of the adjunction. Since $f_*$ is just a forgetful functor and $\sigma^* = - \Lotimes_B A$, we have $L \cong - \Lotimes_B (C \Lotimes_B A)$. Since $C \Lotimes_B A \Lotimes_B C \cong C \Lotimes_B B \Lotimes_B C \cong C \Lotimes_B C \cong C$, we indeed get that $L \cong L^2$ via $\eta$.
\end{rem}

Following a suggestion of Pedro Nicol\'as, we now provide a slight improvement of the main result of~\cite{NS09} which basically says that the converse of the latter observation is true. That is, every torsion-torsion-free triple in $\Der A$ occurs in this way.

\begin{prop} \label{prop:descr-dga}
Let $A$ be a dg algebra over a commutative ring $k$ and $\X \subseteq \Der A$ be a smashing localizing class. Then there is a homological epimorphism $g = f\sigma\inv\dd A \to C$ in $\Ho\Dga k$, represented by homomorphisms of dg algebras $\sigma\dd B \to A$ and $f\dd B \to C$ as in Definition~\ref{def:hom-epi-dga}, \st
\[ \sigma^*f_*\dd \Der C \la \Der A. \]
is fully faithful, its essential image coincides with $\X^\perp$ and $\X = \{ X \in \Der A \mid X \Lotimes_B C = 0 \}$.

Conversely, if $g = f\sigma\inv\dd A \to C$ is a homological epimorphism in the category $\Ho\Dga k$, then $\X = \{ X \in \Der A \mid X \Lotimes_B C = 0 \}$ is a smashing localizing class in $\Der A$.
\end{prop}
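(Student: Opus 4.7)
My strategy is to reduce the forward direction to the case where $A$ itself is cofibrant in $\Dga k$ and then invoke the main theorem of Nicol\'as--Saor\'\i{}n~\cite{NS09}; the converse will follow formally from Remark~\ref{rem:recollement} combined with Proposition~\ref{prop:recollement}.

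For the forward direction, I would first choose a cofibrant replacement $\sigma\dd B \to A$ in $\Dga k$ by Proposition~\ref{prop:model-dgas}, so that $\sigma$ is a surjective quasi-isomorphism with $B$ cofibrant. By~\cite[Lemma 6.1(a)]{Kel94}, recalled in Remark~\ref{rem:recollement}, the functor $\sigma^* = -\Lotimes_B A$ is a triangle equivalence $\Der B \to \Der A$ with quasi-inverse $\sigma_*$. Hence $\X' := \sigma_*(\X) \subseteq \Der B$ is again a smashing localizing subcategory, and since $\Der B$ is compactly generated, Proposition~\ref{prop:recollement} provides a TTF triple $(\X', \Y', \Z')$ on $\Der B$. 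The main theorem of~\cite{NS09}, which applies because $B$ is cofibrant, then produces a strict homological epimorphism of dg algebras $f\dd B \to C$ in the sense of~\cite[Definition 3.10]{Pauk09} realizing this TTF triple. I would set $g = f\sigma\inv\dd A \to C$ in $\Ho\Dga k$; this is a homological epimorphism by Definition~\ref{def:hom-epi-dga} and Lemma~\ref{lem:hom-epi-dga}.

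The stated properties of $g$ are now read off directly. Full faithfulness of $\sigma^*f_*\dd \Der C \to \Der A$ is~\cite[Theorem 3.9]{Pauk09}. Via the adjunctions $(-\Lotimes_B C) \dashv \sigma^*f_* \dashv \RHom_B(C,-)$ recalled in Remark~\ref{rem:recollement}, the essential image of $\sigma^*f_*$ equals the middle term $\Y$ of the TTF triple, which by the defining properties of a TTF triple coincides with $\X^\perp$; moreover $\X$ is identified with the kernel of the left adjoint $-\Lotimes_B C$. For the converse, given any homological epimorphism $g = f\sigma\inv$ in $\Ho\Dga k$, Remark~\ref{rem:recollement} furnishes $\sigma^*f_*$ as a fully faithful triangle functor equipped with both adjoints, hence a TTF triple on $\Der A$ whose leftmost member $\Ker(-\Lotimes_B C)$ is smashing by Proposition~\ref{prop:recollement}.

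The main obstacle, and the content of the ``slight improvement'' over~\cite{NS09}, is the passage from cofibrant $B$ to arbitrary $A$: one must verify that the equivalence $\Der B \simeq \Der A$ transports TTF triples and derived tensor products coherently. Lemma~\ref{lem:dga-change} is exactly tailored for this, ensuring that computations such as $X \Lotimes_A Y$ and $X \Lotimes_B Y$ agree in $\Der k$, while Lemma~\ref{lem:hom-epi-dga} eliminates the ambiguity in the choice of fraction representing $g$.
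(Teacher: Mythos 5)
Your argument is correct and follows essentially the same route as the paper's own proof: pass to a cofibrant replacement $\sigma\dd B \to A$, transfer the smashing class along the equivalence $\sigma_*\dd \Der A \to \Der B$, apply the Nicol\'as--Saor\'\i{}n theorem to obtain a strict homological epimorphism $f\dd B \to C$, form the fraction $g = f\sigma\inv$, and read off the stated properties via Remark~\ref{rem:recollement}. The only cosmetic difference is that you route the TTF-triple step explicitly through Proposition~\ref{prop:recollement} where the paper defers to the statements in~\cite[\S\S4--5]{NS09}, but this does not constitute a different method.
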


\begin{proof}
Suppose that $\X \subseteq \Der A$ is smashing localizing and let $\sigma\dd B \to A$ be a cofibrant replacement of $A$. Then $B$ is homotopically projective in $\Cpx k$ by Proposition~\ref{prop:model-dgas} and the restriction functor $\sigma_*\dd \Der A \to \Der B$ is a triangle equivalence with $\sigma^* = - \Lotimes_B A$ as a quasi-inverse. Let now $\X'$ be the essential image of $\X$ under $\sigma_*$. It is a localizing subcategory of $\Der B$ and it is smashing since this property is preserved by equivalences of categories by Lemma~\ref{lem:smash-coprod}.
So by \cite[Theorem in \S4]{NS09} there exists a morphism of dg algebras $f\dd B \to C$ \st $C \Lotimes_B C \to C$ is a quasi-isomorphism and $\Img f_* = (\X')^\perp$. Hence $g = f\sigma\inv\dd A \to C$ is a homological epimorphism in $\Ho\Dga k$ and $\X^\perp$ is the essential image of the fully faithful functor $\sigma^*f_*\dd \Der C \la \Der A$. It also follows that  
\[ \X' = \{ X' \in \Der B \mid X' \Lotimes_B C = 0 \} \]
(see Remark~\ref{rem:recollement} and the recollement on p.~1232 in~\cite[\S4]{NS09}).
Transferring this along the triangle equivalence $\sigma^*$ provides the formula for $\X$.

The last part follows from Remark~\ref{rem:recollement} and~\cite[Theorem in \S5]{NS09}.
\end{proof}

Hence there is a surjective correspondence from the class of homological epimorphisms in $\Ho\Dga k$ originating in $A$ to the set (not a proper class, see~\cite{Kr00}) of smashing localizing classes in $\Der A$. One might ask when exactly two homological epimorphisms $g\dd A \to C$ and $g'\dd A \to C'$ induce the same smashing localizing class. The answer is given by the following result which we will prove in a special case in the next section.

\begin{prop} \label{conj:hom-epi-dga}\cite{NS13}
Two homological epimorphisms $g\dd A \to C$ and $g'\dd A \to C'$ induce the same smashing localizing class in $\Der A$ \iff there exists an isomorphism $\ph\dd C \to C'$ in $\Ho\Dga k$ \st $g' = \ph g$.
\end{prop}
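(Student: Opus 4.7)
The plan is to pick a common cofibrant replacement $\sigma\dd B \to A$ in $\Dga k$ so that both $g$ and $g'$ are represented by honest dg algebra morphisms $f\dd B \to C$ and $f'\dd B \to C'$, and to transport the smashing class to $\X' \subseteq \Der B$ via the triangle equivalence $\sigma_*\dd \Der A \to \Der B$. The \emph{if} direction is then routine: any isomorphism $\ph\dd C \to C'$ in $\Ho\Dga k$ with $\ph g = g'$ can, by standard model-categorical manipulations, be represented by a dg algebra quasi-isomorphism $\tilde\ph\dd C \to C'$ satisfying $\tilde\ph f = f'$ in $\Ho\Dga k$; hence $C \simeq C'$ as dg $B$-modules, and $X \Lotimes_B C \simeq X \Lotimes_B C'$ for every $X \in \Der B$, so the two induced smashing classes coincide.

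For the nontrivial direction, assume that the smashing classes induced by $g$ and $g'$ coincide. By Proposition~\ref{prop:descr-dga} the essential images $f_*(\Der C)$ and $f'_*(\Der{C'})$ both equal $(\X')^\perp$, and both functors are fully faithful. Let $L\dd \Der B \to (\X')^\perp$ denote the reflection. The strategy is to recover $C$, together with its structural map from $B$, from the object $L(B) \in \Der B$ alone. Choose a homotopically projective replacement $Q \to C$ in $\Cpx B$, where $C$ is viewed as a dg $B$-module via $f$, and form the dg algebra $E := \HOM_B(Q,Q)$. The homological epimorphism property $C \Lotimes_B C \simeq C$ combined with the hom-tensor adjunction provides a zig-zag of dg algebra quasi-isomorphisms linking $E$ to $C$, compatibly with the canonical maps from $B$---namely right multiplication on $E$ and $f$ on $C$. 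The same construction applied to a homotopically projective replacement $Q' \to C'$ in $\Cpx B$ yields a dg algebra $E'$ quasi-isomorphic to $C'$ over $B$.

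It remains to construct a dg algebra quasi-isomorphism between $E$ and $E'$ over $B$. Since $L(B)$ is determined up to quasi-isomorphism in $\Der B$ by the smashing class alone, the modules $Q$ and $Q'$ represent the same object in the dg category of homotopically projective dg $B$-modules, and are linked by a zig-zag of quasi-isomorphisms of such modules. The main obstacle is to upgrade these module-level quasi-isomorphisms to quasi-isomorphisms of the corresponding dg endomorphism algebras, which is delicate because, for a quasi-isomorphism $Q \to Q'$, the natural object $\HOM_B(Q,Q')$ is only a dg bimodule, not a dg algebra. This is the classical Morita-style problem for dg algebras, resolved by the dg enhancement machinery: invoking the result of Nicol\'as and Saor\'\i{}n cited as~\cite{NS13} produces a dg algebra quasi-isomorphism $E \simeq E'$ over $B$, and combining it with the identifications of the previous paragraph yields the desired isomorphism $\ph\dd C \to C'$ in $\Ho\Dga k$ with $\ph g = g'$.
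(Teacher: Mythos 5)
The paper does not actually prove Proposition~\ref{conj:hom-epi-dga}: it is stated as a result of Nicol\'as and Saor\'\i{}n and cited directly to \cite{NS13}, with the surrounding text only promising a proof ``in a special case in the next section'' (namely the ring case, Theorem~\ref{thm:smashing-classif}). So there is no in-paper argument against which to check your proposal.

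That said, your proposal is not a self-contained proof either, and the reason is circular. For the nontrivial direction you reduce the statement to the assertion that the two dg endomorphism algebras $E = \HOM_B(Q,Q)$ and $E' = \HOM_B(Q',Q')$ are linked by a quasi-isomorphism of dg algebras under $B$, and at precisely that point you write ``invoking the result of Nicol\'as and Saor\'\i{}n cited as~\cite{NS13}\ldots'' --- but that is the very reference the proposition itself carries. You have thus reduced the statement to itself. The step you defer is genuinely the heart of the matter: passing from a zig-zag of quasi-isomorphisms between the dg modules $Q$ and $Q'$ to a zig-zag of dg algebra quasi-isomorphisms between their endomorphism dg algebras, compatibly with the structure maps from $B$, is a nontrivial rectification/Morita statement, not a formal consequence of the hypotheses. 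Without supplying an argument for it (for instance via a Keller-style construction replacing $Q$ and $Q'$ by a single bimodule resolution, or via an explicit roof in $\Dga k$), the argument has a hole exactly where one was needed.

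There are also two smaller points worth flagging. First, in the ``if'' direction you claim that an isomorphism $\ph\dd C \to C'$ in $\Ho\Dga k$ with $\ph g = g'$ ``can, by standard model-categorical manipulations, be represented by a dg algebra quasi-isomorphism $\tilde\ph\dd C \to C'$''; this is only automatic when $C$ is cofibrant (every object is fibrant here), so you should either arrange $C$ cofibrant or work with a roof $C \leftarrow D \rightarrow C'$ and apply Lemma~\ref{lem:dga-change} twice. Second, the assertion that $E = \HOM_B(Q,Q)$ is connected to $C$ by a zig-zag of dg algebra quasi-isomorphisms ``compatibly with the canonical maps from $B$'' also deserves justification: the map $B \to E$ is given by the right $B$-action on $Q$, while $B \to C$ is $f$, and the comparison goes through a $B$-$C$-bimodule resolution and the identification $\RHom_B(C,C) \simeq \RHom_C(C,C) \simeq C$; it is correct, but not ``by adjunction'' alone.
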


\section{Homological epimorphisms for rings of weak dimension one}
\label{sec:hom-epi-semihered}

The main objects of interest in our paper are smashing localizations of the derived category $\Der R$ of a ring of weak global dimension at most one. As it turns out, the situation in this case is extremely favorable in that for studying smashing localizations of $\Der R$ it will be enough to consider homological epimorphisms of ordinary algebras (Definition~\ref{def:hom-epi-rings}) instead of homological epimorphisms of dg algebras (Definition~\ref{def:hom-epi-dga}). The aim of the section is to explain this reduction, which is already known for hereditary algebras~\cite{KrSt}.

\subsection{Basics on homological epimorphisms of rings}
\label{subsec:hom-epi-rings}

We start by recalling some standard facts which we will need. Let $R,S$ be associative and unital algebras over a fixed base commutative ring $k$. This is no restriction at all since $k = \bbZ$ is a legal choice, but in some cases it may be convenient to take other base rings. 
We will denote by $\ModR$ and $\Modr{S}$ the categories of right $R$-modules and $S$-modules, respectively. An algebra homomorphism $f\dd R\to S$ is an epimorphism if it is an epimorphism in the category of $k$-algebras. Ring (and algebra) epimorphisms were investigated in \cite{Sil67,Sten75,GdlP87,Laz69}.

An algebra homomorphism $f\colon R\to S$ is an epimorphism \iff $S\otimes_RS\cong S$, \iff $1_R\otimes x=x\otimes 1_R$ in $S\otimes_RS$ for every $x\in S$, \iff the restriction functor $f_*\dd \Modr{S}\to \ModR$ is fully faithful (or the same holds for left modules). A direct way to present elements of $S$ in terms of elements of $R$, which is essentially due to Mazet~\cite{Maz68}, will be discussed later in \S\ref{subsec:lots-of-idemp}.

Two algebra epimorphisms $f\dd R \to S$ and $f'\dd R \to S'$ are said to be \emph{equivalent} if there exists a $k$-algebra isomorphism $\ph\dd S \to S'$ \st $f' = \ph f$. Equivalently, the essential images of $f_*$ and $f'_*$ in $\ModR$ coincide.

The following results will be useful in the sequel.

\begin{prop} \label{prop:silver-lazard}
Let $R$ be a commutative $k$-algebra and $f\dd R\to S$ an algebra homomorphism. The following hold true:
\begin{enumerate}
\item \cite[Corollary 1.2]{Sil67} If $f$ is an epimorphism, then $S$ is a commutative algebra.
\item \cite[Lemma 1.1]{Laz69} $f$ is an epimorphism \iff $f_\p\dd R_\p\to S\otimes_R R_\p$ is an epimorphism for every prime ideal $\p$ of $R$.
\end{enumerate}
\end{prop}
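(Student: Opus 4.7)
My plan is to handle parts (1) and (2) separately, in both cases using the characterization recalled just before the statement: $f$ is an epimorphism iff the multiplication map $\mu\dd S \otimes_R S \to S$ is an isomorphism, equivalently $1_R \otimes s = s \otimes 1_R$ in $S \otimes_R S$ for every $s \in S$.

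For (1), I would apply a dual numbers trick twice. Fix $r \in R$ and consider the inner $k$-derivation $\delta_r\dd S \to S$ defined by $\delta_r(s) = f(r)s - sf(r)$. Because $R$ is commutative, $\delta_r$ vanishes on the image of $f$. I would then assemble $\delta_r$ into a $k$-algebra homomorphism $h\dd S \to S[\epsilon]/(\epsilon^2) \cong S \otimes_k k[\epsilon]/(\epsilon^2)$ by $h(s) = s + \delta_r(s)\epsilon$; since $\epsilon$ is central, the multiplicativity of $h$ reduces precisely to the Leibniz rule for $\delta_r$, and this is the single point in the proof that needs genuine care. The composition $h \circ f$ then agrees with the canonical embedding $\iota\dd S \hookrightarrow S[\epsilon]/(\epsilon^2)$ precomposed with $f$, so the epimorphism property forces $h = \iota$, whence $\delta_r \equiv 0$. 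This shows $f(R) \subseteq Z(S)$. Repeating the argument with the derivation $[t,-]\dd S \to S$ for an arbitrary $t \in S$, which now vanishes on $f(R)$ because $f(R)$ has just been shown to be central, yields $[t,s] = 0$ for every $s \in S$, so $S$ is commutative.

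For (2), I would localize the multiplication map $\mu$. For $(\Rightarrow)$, tensoring the isomorphism $\mu$ with $R_\p$ over $R$ and using the canonical isomorphism
\[
(S \otimes_R S) \otimes_R R_\p \cong (S \otimes_R R_\p) \otimes_{R_\p} (S \otimes_R R_\p)
\]
exhibits the multiplication map $S_\p \otimes_{R_\p} S_\p \to S_\p$ as an isomorphism, so $f_\p$ is an epimorphism. For $(\Leftarrow)$, if every $f_\p$ is an epimorphism then each localization $\mu_\p$ of $\mu$ is an isomorphism of $R_\p$-modules, and the standard local-global principle for $R$-modules forces $\mu$ itself to be an isomorphism, whence $f$ is an epimorphism. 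This direction is essentially a routine diagram chase, and once (1) is in hand it contains no real surprises.
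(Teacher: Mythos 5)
Your proof is correct. The paper itself cites both statements to Silver and Lazard without reproducing a proof, so there is no in-text argument to compare against, but your argument for (1) is genuinely different from Silver's original. Silver works directly with the relation $1 \otimes s = s \otimes 1$ in $S \otimes_R S$: for $r \in R$ the map $a \otimes b \mapsto a f(r) b$ is well-defined on $S \otimes_R S$ precisely because $R$ is commutative, and evaluating it on the two sides of the relation gives $f(r)s = sf(r)$; once $f(R)$ is central, the map $a \otimes b \mapsto a t b$ is well-defined for each $t \in S$, whence $ts = st$. Your dual-numbers argument reaches the same conclusion by bootstrapping from inner derivations vanishing on $f(R)$. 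Both are valid; yours makes the role of the epimorphism property (uniqueness of extensions $S \to S[\epsilon]/(\epsilon^2)$) more transparent, at the cost of the Leibniz-rule verification, while Silver's stays entirely inside the tensor-product characterization you recall at the outset.

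For (2), the argument via the multiplication map $\mu\dd S \otimes_R S \to S$ and the local-global principle is correct, granting the standard base-change identification
\[
(S \otimes_R S) \otimes_R R_\p \;\cong\; (S \otimes_R R_\p) \otimes_{R_\p} (S \otimes_R R_\p).
\]
One caveat worth making explicit: in the $(\Leftarrow)$ direction, this identification, and indeed the ring structure on $S \otimes_R R_\p$ itself, quietly use that $f(R)$ is central in $S$. Since the statement already presupposes that $S \otimes_R R_\p$ carries a ring structure making $f_\p$ an algebra map, this is an implicit hypothesis of the formulation rather than a gap in your argument; but your remark that $(\Leftarrow)$ contains ``no real surprises once (1) is in hand'' would read better if you explained the dependence, because (1) as stated only gives commutativity of $S$ under the conclusion of (2), not under its $(\Leftarrow)$-hypothesis.
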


\begin{defn} \label{def:hom-epi-rings}
A $k$-algebra epimorphism $f\dd R\to S$ is a \emph{homological epimorphism} if $\Tor_i^R(S,S)=0$ for every $i\geq 1$.
\end{defn}

\emph{Homological algebra epimorphisms} have been introduced and characterized by Geigle and Lenzing in \cite{GL91}, see Proposition~\ref{prop:char-hom-epi} below. While an algebra epimorphism $R\to S$ implies that the category of $S$-modules is equivalent to a full subcategory of the category of $R$-modules, homological epimorphisms are characterized by the analogous property for derived categories.

An algebra epimorphism $f\dd R\to S$ with $S$ flat as a left or right $R$-module is clearly a homological epimorphism. It is called a \emph{flat epimorphism}.

\begin{prop} \label{prop:char-hom-epi} \cite[4.4]{GL91}
Let $R$, $S$ be $k$-algebras. An algebra homomorphism $f\dd R\to S$ is a homological ring epimorphism \iff one of the following equivalent conditions holds:
\begin{enumerate}
\item $S\otimes_RS\cong {_SS}_S$ and $\Tor_i^R(S,S)=0$ for every $i\geq 1$ (i.e.\ the natural map $S\Lotimes_RS\to S$ is an isomorphism).

\item For every right $S$-module $N$ and a left $S$-module $M$, the natural map $\Tor_i^R(N, M)\to \Tor_i^S(N, M)$ is an isomorphism for every $i\geq 0$ (i.e.\ the natural map $N \Lotimes_R M \to N \Lotimes_S M$ is an isomorphism).

\item For every $S$-modules $M, M'$, the natural morphism $\Ext_S^i(M,M')\to \Ext_R^i(M,M')$ is an isomorphism for every $i\geq 0$ (i.e.\ the natural morphism $\RHom_S(M,M') \to \RHom_R(M,M')$ is an isomorphism).

\item The induced functor $f_*\dd \Der S \la \Der R$ is a full embedding of triangulated categories.
\end{enumerate}
\end{prop}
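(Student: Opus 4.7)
The plan is to establish that (1)--(4) are all equivalent to the definition by exploiting the adjunctions between (derived) restriction and extension of scalars. First I would note that Definition~\ref{def:hom-epi-rings} amounts to (1): being a ring epimorphism is equivalent to the multiplication $S \otimes_R S \to S$ being an isomorphism (see \S\ref{subsec:hom-epi-rings}), which together with the Tor-vanishing condition is exactly the assertion that the canonical morphism $S \Lotimes_R S \to S$ is an isomorphism in $\Der{S \otimes_k S\op}$.

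For (1) $\Leftrightarrow$ (2) I would use the chain of natural isomorphisms
\[ N \Lotimes_R M \cong (N \Lotimes_S S) \Lotimes_R (S \Lotimes_S M) \cong N \Lotimes_S (S \Lotimes_R S) \Lotimes_S M, \]
which under (1) collapses to $N \Lotimes_S M$. Taking homology yields (2); conversely, (2) specialized to $N = M = S$ gives back $S \otimes_R S \cong S$ and $\Tor_i^R(S, S) = 0$ for $i \geq 1$, hence (1). The equivalence (1) $\Leftrightarrow$ (3) is dual: the adjunction yields
\[ \RHom_R(M, M') \cong \RHom_S(M, \RHom_R(S, M')), \]
and $\RHom_R(S, M') \cong \RHom_S(S \Lotimes_R S, M') \cong M'$ under (1). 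For the converse (3) $\Rightarrow$ (1), the case $i = 0$ delivers the ring-epimorphism condition, and for the Tor-vanishing I would invoke the Pontryagin-duality isomorphism $\Ext_R^i(M, N^+) \cong \Tor_i^R(N, M)^+$ with $N^+ = \Hom_\bbZ(N, \bbQ/\bbZ)$, applied to $M = N = S$, converting the Ext-vanishing supplied by (3) into the required Tor-vanishing.

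For (1) $\Leftrightarrow$ (4), I would invoke the adjunction $- \Lotimes_R S \dashv f_*\dd \Der S \to \Der R$. Fully faithfulness of $f_*$ is equivalent to the counit $f_*(Y) \Lotimes_R S \to Y$ being an isomorphism for every $Y \in \Der S$. The case $Y = S$ is precisely (1), and the full subcategory of those $Y \in \Der S$ for which the counit is an isomorphism is triangulated and closed under coproducts; since $S$ is a compact generator of $\Der S$, this subcategory is the whole of $\Der S$.

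The one place where anything beyond formal manipulation is needed is the implication (3) $\Rightarrow$ (1), where Ext-vanishing has to be translated into Tor-vanishing via character modules; all the rest reduces to associativity and adjunction isomorphisms for $\Lotimes$ and $\RHom$, supplemented by a single triangulated-generation argument for the passage from $S$ to all of $\Der S$.
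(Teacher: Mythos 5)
The paper does not prove this proposition; it simply cites it as \cite[4.4]{GL91}, so there is no in-paper proof against which to compare. Evaluating your argument on its own, it is correct and is essentially the standard adjunction-theoretic argument one finds in Geigle--Lenzing: one first reduces everything to the single quasi-isomorphism $S \Lotimes_R S \to S$, and then propagates it to all $S$-modules via derived associativity, tensor--Hom adjunction, and a dévissage along the compact generator $S$ of $\Der S$. Your handling of $(1)\Leftrightarrow(4)$ via the counit of $(-\Lotimes_R S) \dashv f_*$ and triangulated generation is exactly right, and the passage $(3)\Rightarrow(1)$ via character modules is the usual trick to convert Ext-vanishing into Tor-vanishing.

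Two small points worth tightening. First, in the display
\[ N \Lotimes_R M \cong N \Lotimes_S (S \Lotimes_R S) \Lotimes_S M \]
the re-association implicitly requires $S \Lotimes_R S$ to be modeled by a complex carrying an honest $S$-$S$-bimodule structure and compatible with the quasi-isomorphism to $S$; this is straightforward (take $Q_\bullet \to S$ a projective resolution of $S$ as a right $R$-module, so $Q_\bullet \otimes_R S$ is a bounded-above complex of projective right $S$-modules, and the induced map $Q_\bullet \otimes_R S \to S$ is right $S$-linear and by $(1)$ a quasi-isomorphism), but it is the one place the argument is not purely formal and is worth a sentence. Second, the duality isomorphism should read $\Ext_R^i(M,N^+) \cong \bigl(\Tor_i^R(M,N)\bigr)^+$ with $M$ a right $R$-module and $N$ a left $R$-module; with $M=N=S$ the conclusion you draw is unaffected, but the variance as written has a handedness slip. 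Neither issue affects correctness.
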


In the coming lemma we collect some easy observations about homological epimorphisms.

\begin{lem}\label{lem:hom-epi-localization} ~
\begin{enumerate}
\item[(1)] The composition of homological epimorphisms is a homological epimorphism.
\end{enumerate}
If, moreover, $R$ is a commutative ring, then also the following hold:
\begin{enumerate} 
\item[(2)] If $\Sigma$ is a multiplicative subset of $R$, then $R\to R\Sigma^{-1}$ is a flat epimorphism.
\item[(3)] A ring homomorphism $f\dd R\to S$ is a homological epimorphism \iff $f_\p\dd R_\p\to S\otimes_R R_\p$ is a homological epimorphism for every prime ideal $\p\in \Spec R$.
\end{enumerate}
\end{lem}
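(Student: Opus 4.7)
The plan is to verify the three items in turn, using the equivalent characterizations of a homological epimorphism from Proposition~\ref{prop:char-hom-epi} together with standard properties of localization.

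For (1), let $f\dd R\to S$ and $g\dd S\to T$ be homological epimorphisms. The composition $gf$ is a ring epimorphism because epimorphisms compose, so the only point is the derived condition. I would apply condition (1) of Proposition~\ref{prop:char-hom-epi} and compute
\[ T\Lotimes_R T \;\cong\; T\Lotimes_S (S\Lotimes_R S)\Lotimes_S T \;\cong\; T\Lotimes_S S\Lotimes_S T \;\cong\; T, \]
invoking $S\Lotimes_R S\cong S$ from $f$ and $T\Lotimes_S T\cong T$ from $g$. Alternatively, one could invoke condition (4): the composition of the fully faithful functors $g_*\dd \Der T\to \Der S$ and $f_*\dd \Der S\to \Der R$ remains fully faithful, and this is the fully faithful functor induced by $gf$.

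For (2), the ring $R\Sigma\inv$ is flat over $R$ by exactness of localization, and the standard identification $R\Sigma\inv\otimes_R R\Sigma\inv\cong R\Sigma\inv$ (inverting already-invertible elements is trivial) shows that the canonical map is a ring epimorphism. Flatness then upgrades this to a flat, and hence homological, epimorphism.

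For (3), write $S_\p := S\otimes_R R_\p$ and exploit that $R_\p$ is flat over $R$. In the forward direction, if $f$ is a homological epimorphism then $f_\p$ is a ring epimorphism by Proposition~\ref{prop:silver-lazard}(2), and applying $-\Lotimes_R R_\p$ to the isomorphism $S\Lotimes_R S\cong S$ yields $S_\p\Lotimes_{R_\p} S_\p \cong (S\Lotimes_R S)\Lotimes_R R_\p \cong S_\p$. For the converse, the same proposition gives that $f$ is a ring epimorphism whenever every $f_\p$ is, and the vanishing of $\Tor_i^R(S,S)$ for $i\ge 1$ follows from the flatness of $R_\p$ via the identification $\Tor_i^R(S,S)\otimes_R R_\p \cong \Tor_i^{R_\p}(S_\p,S_\p) = 0$ combined with the fact that an $R$-module vanishes if and only if all its localizations at prime ideals vanish. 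The only delicate point throughout is keeping track of the bimodule structures when chaining derived tensor products; everything else is formal.
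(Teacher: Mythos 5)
Your proposal is correct and follows essentially the same route as the paper. The paper proves (1) by directly invoking Proposition~\ref{prop:char-hom-epi}(2), viewing $T$ as an $S$-bimodule so that $\Tor_i^R(T,T)\cong\Tor_i^S(T,T)=0$; your first computation via $T\Lotimes_S(S\Lotimes_R S)\Lotimes_S T$ is a derived-category repackaging of the same base-change fact, and your alternative via condition (4) (composing fully faithful restriction functors) is equally clean. For (3) the paper argues symmetrically in both directions using the exactness of $-\otimes_R R_\p$, the isomorphism $\Tor_i^R(S,S)\otimes_R R_\p\cong\Tor_i^{R_\p}(S_\p,S_\p)$, the local-to-global vanishing principle, and Proposition~\ref{prop:silver-lazard}(2) for the epimorphism condition, which is exactly what you use; your slight asymmetry in phrasing the two directions is immaterial.
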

\begin{proof} (1) Let $f\dd R\to S$ and $g\dd S\to T$ be two homological epimorphisms. Clearly $gf$ is an epimorphism and, since $T$ is an $S$-bimodule, $\Tor_i^R(T,T)\cong\Tor_i^S(T,T)=0$.

(2) It is well known that $R\Sigma^{-1}$ is flat and that $R\Sigma^{-1}\otimes_RR\Sigma^{-1}\cong R\Sigma^{-1}$.

(3) Let $i\geq 1$; then $\Tor_i^{R}(S,S)=0$ if and only if $\Tor_i^{R}(S,S)\otimes_R R_\p=0$ for every prime ideal $\p$ of $R$ (see the argument in~\cite[Definition 2.4.10]{EJ00}), and $\Tor_i^{R}(S, S)\otimes_R R_\p\cong \Tor_i^{R_\p}(S\otimes_R R_\p, S\otimes_R R_\p)$, for every prime ideal $\p$ of $R$, since $-\otimes_RR_\p$ is an exact functor (see also \cite[Theorem 2.1.11]{EJ00}). Thus the conclusion follows  by Proposition~\ref{prop:silver-lazard}(2).
\end{proof}

\subsection{An application of K\"unneth's theorem}
\label{subsec:Kunneth}

Now we specialize to not necessarily commutative $k$-algebras $R$ of weak global dimension ($\wgldim$) at most $1$. That is, we require by definition that $\Tor_2^R(-,-) \equiv 0$, or equivalently that submodules of flat modules are flat. We start with collecting some easy facts about homological ring epimorphisms in this case. Compare also with \cite[Example 4]{NS09}.

\begin{lem} \label{lem:kernel-idempotent}
Let $R$ be a $k$-algebra with $\wgldim R\leq 1$ and let $f\dd R\to S$ be a homological epimorphism. Then the following hold:
\begin{enumerate}
\item $\Ker f$ is an idempotent two-sided ideal of $R$ and $\wgldim S\leq 1$.
\item The canonical projection $\pi\dd R \to R/\Ker f$ and the induced homomorphism $\overline{f}\dd R/\Ker f \to S$  are homological ring epimorphisms.
\end{enumerate}
Moreover, for any two sided ideal $I$, the canonical projection $R\to R/I$ is a homological ring epimorphism if and only if $I$ is an idempotent two-sided ideal of $R$.
\end{lem}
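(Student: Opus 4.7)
The plan is to establish the ``moreover'' equivalence first, then use it to tackle parts (1) and (2) in turn.

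For the moreover statement, observe that any quotient $\pi\dd R\to R/I$ is automatically a ring epimorphism, so in view of the bound $\wgldim R\le 1$ (which kills $\Tor^R_i$ for $i\ge 2$) it is homological iff $\Tor_1^R(R/I,R/I)=0$. I would compute this Tor group using the flat resolution $0\to I\to R\to R/I\to 0$, which is flat because $I$ is a submodule of the flat left $R$-module $R$ and $\wgldim R\le 1$. The induced differential $R/I\otimes_R I\to R/I\otimes_R R$ becomes the map $I/I^2\to R/I$ induced by inclusion, and this map is zero since $I\subseteq\Ker(R\to R/I)$. Hence $\Tor_1^R(R/I,R/I)\cong I/I^2$, which vanishes iff $I=I^2$.

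For part (1), put $I=\Ker f$ and factor $f=\bar f\pi$ with $\bar f\dd R/I\hookrightarrow S$ injective; let $C=\Coker\bar f$, giving a short exact sequence $0\to R/I\to S\to C\to 0$ of $R$-bimodules. I would apply $S\otimes_R-$ on the left and use $\Tor_1^R(S,S)=0$ (because $f$ is homological) together with $\Tor_2^R(S,C)=0$ (because $\wgldim R\le 1$) to pinch the long exact Tor sequence into $\Tor_1^R(S,R/I)=0$. Computing the same Tor group via the flat resolution of $R/I$ identifies it with $S\otimes_R I$, so $S\otimes_R I=0$. Finally, tensoring the injection $\bar f\dd R/I\hookrightarrow S$ with the flat left $R$-module $I$ yields an injection $I/I^2=R/I\otimes_R I\hookrightarrow S\otimes_R I=0$, hence $I=I^2$. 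The bound $\wgldim S\le 1$ then follows immediately from Proposition~\ref{prop:char-hom-epi}(2), which identifies $\Tor_i^S$ with $\Tor_i^R$ on $S$-modules.

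For part (2), combining (1) with the moreover shows that $\pi$ is a homological epimorphism. The map $\bar f$ is a ring epimorphism since $\pi$ is surjective and $f=\bar f\pi$ is. To upgrade $\bar f$ to homological, I would invoke Proposition~\ref{prop:char-hom-epi}(4) and cancel fully faithful functors: both $f_*=\pi_*\bar f_*\dd \Der S\to\Der R$ and $\pi_*\dd\Der{R/I}\to\Der R$ are full embeddings, so the chain $\Hom_{\Der S}(M,N)\cong\Hom_{\Der R}(f_*M,f_*N)\cong\Hom_{\Der{R/I}}(\bar f_*M,\bar f_*N)$ shows that $\bar f_*$ is fully faithful, whence $\bar f$ is homological.

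The main obstacle is the idempotency $I=I^2$ in (1): it requires chaining two Tor long exact sequences (one from $0\to R/I\to S\to C\to 0$, the other from the flat resolution $0\to I\to R\to R/I\to 0$) and then exploiting the flatness of $I$ to inject $I/I^2$ into a group which the first chain forces to vanish. Everything else reduces to the characterization in Proposition~\ref{prop:char-hom-epi} and a routine Hom-space cancellation.
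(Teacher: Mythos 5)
Your proof is correct. The overall strategy is the same as the paper's (exploit the definition of homological epimorphism via Tor vanishing, analyze both short exact sequences $0\to I\to R\to R/I\to 0$ and $0\to R/I\to S\to S/f(R)\to 0$, and invoke Proposition~\ref{prop:char-hom-epi}), but a few sub-steps differ in a way worth noting. To get $I^2=I$, you derive $\Tor_1^R(S,R/I)=0$ just as the paper does, then identify $\Tor_1^R(S,R/I)\cong S\otimes_R I$ (because the map $S\otimes_R I\to S$ factors through $f|_I=0$) and use flatness of $I$ to embed $I/I^2$ into $S\otimes_R I=0$; the paper instead chases a second Tor long exact sequence, obtained by applying $-\otimes_R R/I$ to $0\to R/I\to S\to S/f(R)\to 0$, to conclude $\Tor_1^R(R/I,R/I)=0$ directly and then reads off $I/I^2=0$ from the four-term sequence. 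To show $\overline{f}$ is homological, you cancel fully faithful derived restriction functors using Proposition~\ref{prop:char-hom-epi}(4); the paper uses the change-of-rings isomorphism $\Tor_1^{R/I}(S,S)\cong\Tor_1^R(S,S)=0$ from Proposition~\ref{prop:char-hom-epi}(2). Both routes are equally valid and of comparable length; your version of the $I^2=I$ step is arguably a bit more conceptual (flatness of $I$ does the work), while the paper's avoids invoking flatness of $I$ explicitly. Your reorganization of proving the ``moreover'' equivalence first is harmless, since that computation ($\Tor_1^R(R/I,R/I)\cong I/I^2$) does not depend on the rest of the lemma.
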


\begin{proof}
(1) Let $I=\Ker f$ and apply the functors $S\otimes_R-$ and $-\otimes_R R/I$ to the exact sequence
\[ 0 \la R/I \mapr{\overline{f}} S \la S/f(R) \la 0 \]
to get $0 = \Tor_2^R(S,S/f(R)) \to \Tor_1^R(S,R/I) \to \Tor_1^R(S,S)=0$ and $0=\Tor_2^R(S/f(R),R/I)\to \Tor_1^R(R/I,R/I)\to \Tor_1^R(S, R/I) = 0$. Consequently $\Tor_1^R(R/I,R/I)=0$. Now consider the exact sequence $0\to I\to R\to R/I\to 0$ and apply the functor $R/I\otimes_R-$ to obtain the exact sequence
\[ 0 \la R/I\otimes_R I \cong I/I^2 \la R/I \la R/I\otimes_RR/I \la 0, \] 
which yields $I^2=I$, since $R/I\otimes_RR/I\cong R/I$.

By Proposition~\ref{prop:char-hom-epi}(2), $\Tor_2^S(-,-)\cong \Tor_2^R(-,-)$, hence $\wgldim S\leq 1$.

(2) Let $I=\Ker f$. From the proof of part (1) $\Tor_1^R(R/I, R/I)=0$, thus, $\pi$ is a homological ring epimorphism. In particular, $\Tor_1^{R/I}(S,S)\cong \Tor^R_1(S,S)=0$, since $S$ is an $R/I$-bimodule. Moreover, $\overline{f}$ is clearly an algebra epimorphism, so also homological.

The previous arguments show that a two-sided ideal is idempotent if and only if $\Tor_1^R(R/I, R/I)=0$, hence the last statement follows immediately.
\end{proof}

In order to relate this to smashing localizations of $\Der R$ and homological epimorphisms of dg algebras, we state a version of K\"unneth's theorem.
   
\begin{prop}\label{prop:Kunneth}
Let $R$ be a $k$-algebra with $\wgldim R\leq 1$. Let $X$ be a complex of right $R$ modules and $Z$ a complex of left $R$-modules. Then, the following are equivalent:
\begin{enumerate}
\item $X \Lotimes_R Z = 0$ in $\Der\Ab$;
\item $\Tor_i^R\big(H^p(X), H^q(Z)\big)=0$ for every $p,q\in \bbZ$ and every $i\geq 0$;
\item $H^p(X) \Lotimes_R H^q(Z)=0$, for every $p,q\in \bbZ$.
\end{enumerate}
\end{prop}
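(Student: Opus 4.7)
The proof strategy is to establish a K\"unneth-style short exact sequence
\[ 0 \to \bigoplus_{p+q=n} H^p(X) \otimes_R H^q(Z) \to H^n(X \Lotimes_R Z) \to \bigoplus_{p+q=n+1} \Tor_1^R\bigl(H^p(X), H^q(Z)\bigr) \to 0 \]
for every $n \in \bbZ$, from which all three equivalences can be read off directly.

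The equivalence (2) $\Leftrightarrow$ (3) is easy and requires no K\"unneth argument. For any right $R$-module $M$ and left $R$-module $N$, the hypothesis $\wgldim R \le 1$ forces $\Tor_i^R(M,N) = 0$ for $i \ge 2$, so the derived tensor product $M \Lotimes_R N$ has cohomology concentrated in at most two degrees and vanishes precisely when both $M \otimes_R N$ and $\Tor_1^R(M,N)$ vanish. Applying this to $M = H^p(X)$ and $N = H^q(Z)$ for every $p,q$ shows that (2) and (3) are equivalent.

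To obtain the displayed short exact sequence, I would first pick a K-flat resolution $\pi\dd P \to X$ in which every term $P^n$ is a flat right $R$-module; such a resolution exists for any (unbounded) complex by Spaltenstein's construction. Since $\wgldim R \le 1$, every submodule of a flat module is flat, so the cycle modules $Z^n(P) = \ker(\dif^n)$, as well as the boundary modules $B^n(P)$, are flat. Under these hypotheses the classical K\"unneth theorem applies to $P \otimes_R Z$, which represents $X \Lotimes_R Z$ in $\Der\Ab$, and yields the desired short exact sequence. One can alternatively invoke the hyper-Tor spectral sequence
\[ E_2^{p,q} = \bigoplus_{i+j=p} \Tor_{-q}^R\bigl(H^i(X), H^j(Z)\bigr) \Longrightarrow H^{p+q}(X \Lotimes_R Z), \]
which degenerates to the above SES because only the two rows $q = 0$ and $q = -1$ can be nonzero.

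The implication (2) $\Rightarrow$ (1) is immediate from the SES, since both end terms vanish. Conversely, if $X \Lotimes_R Z = 0$, then $H^n(X \Lotimes_R Z) = 0$ for every $n$, and the SES forces each individual summand on either end to vanish, giving $H^p(X) \otimes_R H^q(Z) = 0 = \Tor_1^R\bigl(H^p(X), H^q(Z)\bigr)$ for all $p, q$, which, combined with the automatic vanishing of higher Tors, yields (2). The only nontrivial technical point is thus the K\"unneth short exact sequence in the unbounded setting with the mild flatness data we can extract from $\wgldim R \le 1$; once this is in hand, everything else is bookkeeping.
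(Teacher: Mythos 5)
Your main argument is correct and matches the paper's approach: both apply the classical Cartan--Eilenberg K\"unneth short exact sequence to a replacement of $X$ whose boundaries are flat thanks to $\wgldim R \le 1$ (the paper takes a homotopically projective replacement with projective terms; you take a K-flat one with flat terms, an inessential difference). One caution on your suggested alternative: the hyper-Tor spectral sequence arises from a filtration whose convergence is not automatic for complexes unbounded in both directions, so in this generality the direct K\"unneth argument is the safe route.
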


\begin{proof}
Let $P \to X$ be a homotopically projective replacement of $X$ in $\Cpx R$ in the sense of \S\ref{subsec:model-dg-mod}, so that the morphism is a quasi-isomorphism and $P$ is homotopically projective. We have $X \Lotimes_R Z=0$ \iff $H^n(P\otimes_RZ)=0$ for every $n\in \bbZ$. The complex $P$ has projective terms, so the coboundary module $\dif(P^n)$, where $\dif\dd P^n \to P^{n+1}$ is the differential of $P$, is a flat submodule of $P^{n+1}$ for every $n\in \bbZ$. Similarly all cocycle modules of $P$ are flat. By K\"unneth's theorem~\cite[Ch. VI, Theorem 3.1]{CE56} there is an exact sequence:
\begin{multline} \label{eqn:Kunneth}
0 \la \bigoplus_{p+q=n} H^p(P) \otimes_R H^q(Z) \la H^n(P\otimes_RZ) \la \\ \la \bigoplus_{p+q=n+1} \Tor_1^R\big(H^p(P),H^q(Z)\big) \la 0.
\end{multline}

This establishes the equivalence (1) $\Leftrightarrow$ (2). The sequence (\ref{eqn:Kunneth}) considered for the complexes $H^q(Z)$ and $H^p(X)$ concentrated in degree zero gives the equivalence of conditions (2) and (3).
\end{proof}


Suppose now that we have a homomorphism $f\sigma\inv\dd R \to C$ in $\Ho\Dga k$, assuming that $R$ is an algebra over a commutative base ring $k$. If $\sigma = 1_R$ (i.e.\ $f$ is represented by a morphism of dg algebras rather than a fraction), the last proposition says, using the notation of Remark~\ref{rem:recollement}, that
\begin{align*}
\X &= \Ker(-\Lotimes_R C) = \\ &= \big\{ X \in \Der R \mid \Tor_i^R\big(H^p(X), H^q(C)\big) = 0 \textrm{ for all } p,q \in \bbZ \textrm{ and } i \ge 0 \big\}.
\end{align*}
This is very convenient as it is enough to consider modules rather than complexes.

A complete analogy is true for general homomorphisms $f\sigma\inv$ starting at $R$, but more work is required. We first establish an auxiliary lemma, which is analogous to~\cite[Lemma 6.3]{Kel94}.

\begin{lem} \label{lem:representing-bimod}
Let $k$ be a commutative ring and $A$ be a dg algebra over $k$. Given a homomorphism $f\sigma\inv\dd A \to C$ in $\Ho\Dga k$, represented by homomorphisms of dg algebras $\sigma\dd B \to A$ and $f\dd B \to C$ as in Definition~\ref{def:hom-epi-dga}, \st $C$ is a cofibrant dg algebra in the sense of~Proposition~\ref{prop:model-dgas}, there exists a dg $A$-$C$-bimodule ${_AZ_C}$, homotopically projective as a left $A$-module, \st the functor
\[ -\otimes_A Z\dd \Der A \la \Der C \]
is naturally equivalent to $- \Lotimes_B C\dd \Der A \to \Der C$ (compare to Remark~\ref{rem:recollement}).
\end{lem}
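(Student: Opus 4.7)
The plan is to build $Z$ by first replacing $C$ as a dg $B$-$C$-bimodule and then extending scalars along $\sigma$. View $C$ as a dg $B$-$C$-bimodule, i.e.\ as a dg module over $B \otimes_k C\op$, where $B$ acts on the left via $f$ and $C$ on the right by multiplication, and take a cofibrant replacement $p\dd Q \to C$ in $\Cpx{B \otimes_k C\op}$, so that $Q$ is homotopically projective as a dg $B$-$C$-bimodule and $p$ is a surjective quasi-isomorphism.

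The key technical step is to show that $Q$ is still homotopically projective when regarded just as a left dg $B$-module. By Proposition~\ref{prop:htp-proj-dgmod}, $Q$ is a summand of a module $P$ that is the union of a chain $0 = P_0 \subseteq P_1 \subseteq \cdots$ of dg $B$-$C$-subbimodules whose successive factors are coproducts of suspensions of $B \otimes_k C\op$. Since $C$ is assumed to be a cofibrant dg algebra, Proposition~\ref{prop:model-dgas} ensures that the underlying complex of $C$, and hence of $C\op$, is homotopically projective in $\Cpx k$. Applying Lemma~\ref{lem:htp-proj-tensor}(2) to ${_BB_k}$ and ${_kC\op}$ shows that $B \otimes_k C\op$ is homotopically projective as a left dg $B$-module; Lemma~\ref{lem:htp-proj-tensor}(1) then promotes this to $P$, and passing to the summand $Q$ finishes the claim.

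Now set $Z := A \otimes_B Q$, endowed with its inherited dg $A$-$C$-bimodule structure ($A$ acting on the left, $C$ on the right through $Q$). A further application of Lemma~\ref{lem:htp-proj-tensor}(2), this time to ${_AA_B}$ and ${_BQ}$, shows that $Z$ is homotopically projective as a left dg $A$-module, so that $-\otimes_A Z$ descends to a well-defined triangle functor $\Der A \to \Der C$. To compare it with $-\Lotimes_B C$, observe that for every $M \in \Cpx A$ the unitor gives a natural isomorphism $M \otimes_A Z = M \otimes_A (A \otimes_B Q) \cong M \otimes_B Q$ of right dg $C$-modules. Choosing a functorial homotopically projective replacement $\pi\dd M' \to M_B$ in $\Cpx B$, the diagram
\[
M \otimes_B Q \mapl{\pi \otimes Q} M' \otimes_B Q \mapr{M' \otimes p} M' \otimes_B C
\]
is a natural zigzag of quasi-isomorphisms: the left arrow because $Q$ is homotopically projective as a left $B$-module, and the right arrow because $M'$ is homotopically projective as a right $B$-module and $p$ is a quasi-isomorphism. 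The right-hand term is by definition $M \Lotimes_B C$, yielding the required natural equivalence of functors $\Der A \to \Der C$. The main obstacle is really the passage from bimodule to one-sided homotopical projectivity in the second paragraph; this is precisely where the cofibrancy of $C$ as a dg algebra---and thus the model structure on $\Dga k$ provided by Proposition~\ref{prop:model-dgas}---enters in an essential way.
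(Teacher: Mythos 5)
Your proof is correct and follows essentially the same route as the paper's: cofibrantly replace $C$ as a dg $B$-$C$-bimodule, use the description of cofibrant dg modules together with the cofibrancy of $C$ as a dg algebra to deduce one-sided homotopical projectivity over $B$, set $Z = A \otimes_B Q$, and compare via the standard zigzag. The only cosmetic difference is that you write the bimodule algebra as $B \otimes_k C\op$ rather than $B\op \otimes_k C$ (a choice of convention for which side acts where, given the paper's convention that $\Cpx A$ is right $A$-modules), and you spell out the final zigzag of quasi-isomorphisms that the paper compresses into a chain of isomorphisms.
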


\begin{proof}
Recall that $f\sigma\inv$ is a right fraction in $\Dga k$ of the form
\[
\xymatrix{
& B \ar[dl]_\sigma \ar[dr]^f \\
A && C,
}
\]
where $C$ is a cofibrant dg algebra and as such $C$ is homotopically projective in $\Cpx k$ by Proposition~\ref{prop:model-dgas}.

Let $v\dd {_BV_C} \to {_BC_C}$ be a homotopically projective resolution of $C$ in $\Cpx{B\op \otimes_k C}$. Since $C$ is homotopically projective in $\Cpx k$, it follows from Lemma~\ref{lem:htp-proj-tensor}(2) that $B\op \otimes_k C$ is homotopically projective as a left dg $B$-module. Applying Proposition~\ref{prop:htp-proj-dgmod} and Lemma~\ref{lem:htp-proj-tensor}(1), we deduce that any homotopically projective $B$-$C$-bimodule is a homotopically projective left dg $B$-module, and in particular so is ${_BV_C}$. Thus, if we put ${_AZ_C} = A \otimes_B V$, then $Z$ is homotopically projective in $\Cpx A$ again by Lemma~\ref{lem:htp-proj-tensor}(2) and we have the following natural isomorphisms in $\Der C$
\[ X \Lotimes_B C \cong X \otimes_B V \cong X \otimes_A (A \otimes_B V) = X \otimes_A Z \]
for each $X \in \Cpx A$.
\end{proof}

Now we can compute the kernel of $- \Lotimes_B C\dd \Der R \to \Der C$.

\begin{lem} \label{lem:kernel-of-localization}
Let $R$ be a $k$-algebra with $\wgldim R \le 1$ and let $f\sigma\inv\dd R \to C$ be a homomorphism in $\Ho\Dga k$. Using the notation of Remark~\ref{rem:recollement}, we put $\X = \Ker(-\Lotimes_B C) \subseteq \Der R$.

Given $X \in \Der R$, then $X \in \X$ \iff $H^p(X) \in \X$ for every $p \in \bbZ$ \iff $H^p(X) \Lotimes_R H^q(C) = 0$ for every $p,q \in \bbZ$.
\end{lem}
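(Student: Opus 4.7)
The plan is to reduce the statement to an application of Proposition~\ref{prop:Kunneth} after replacing the functor $-\Lotimes_B C$ by tensor product with a concrete $R$-complex.

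First, I will invoke Lemma~\ref{lem:representing-bimod} (taking $A = R$) to produce a dg $R$-$C$-bimodule $Z$, homotopically projective as a left dg $R$-module, such that $-\otimes_R Z \dd \Der R \to \Der C$ represents $-\Lotimes_B C$. Since a complex over $C$ vanishes in $\Der C$ precisely when it is acyclic, the condition $X \in \X$ translates to $X \otimes_R Z = 0$ in $\Der\Ab$; and as $Z$ is homotopically projective over $R$, this in turn equals $X \Lotimes_R Z$ there.

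Second, I need to identify $H^q(Z)$ with $H^q(C)$ as left $R$-modules. From the construction given in the proof of Lemma~\ref{lem:representing-bimod}, $Z = R \otimes_B V$ where $v\dd V \to C$ is a homotopically projective resolution in $\Cpx{B\op \otimes_k C}$. As noted there, $V$ is then automatically homotopically projective as a left dg $B$-module, so the quasi-isomorphism $\sigma\dd B \to R$ yields a quasi-isomorphism $V = B \otimes_B V \to R \otimes_B V = Z$. Composing with $v$ gives $H^q(Z) \cong H^q(C)$, compatibly with the $R$-module structures on both sides.

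Third, I will apply Proposition~\ref{prop:Kunneth} to the complexes $X$ (of right $R$-modules) and $Z$ (of left $R$-modules): we get $X \Lotimes_R Z = 0$ in $\Der\Ab$ if and only if $H^p(X) \Lotimes_R H^q(Z) = 0$ for all $p,q \in \bbZ$. Combined with the identification of Step~2, this yields the equivalence of $X \in \X$ with the condition $H^p(X) \Lotimes_R H^q(C) = 0$ for all $p,q$. Running the same chain of equivalences for each module $H^p(X)$ (viewed as a complex concentrated in degree~$0$) then gives the remaining characterization $H^p(X) \in \X$ for every $p$.

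The main obstacle I expect is the compatibility of $R$-module structures in the identification $H^q(Z) \cong H^q(C)$: the $R$-action on $H^q(C)$ is induced by the morphism $f\sigma\inv\dd R \to C$ in $\Ho\Dga k$ (which on $H^0$ produces an honest ring homomorphism into $H^0(C)$), whereas the action on $H^q(Z)$ is inherited from the explicit left $R$-action on $Z = R \otimes_B V$. Verifying this compatibility amounts to unwinding both the construction of $Z$ and the description of the quasi-inverse $\sigma^{*} = -\Lotimes_B R$ of $\sigma_*$ from Remark~\ref{rem:recollement}; the remaining steps are then routine.
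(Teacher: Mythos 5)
Your outline reproduces the paper's own argument: reduce to the case of tensoring with a concrete bimodule via Lemma~\ref{lem:representing-bimod}, identify $H^q(Z)$ with $H^q(C)$ over $R$, and feed everything into Proposition~\ref{prop:Kunneth}. The only real gap is that you invoke Lemma~\ref{lem:representing-bimod} directly, but that lemma \emph{requires} $C$ to be a cofibrant dg algebra, and there is no such hypothesis in the statement you are proving. The cofibrancy is not cosmetic: you use it in Step~2 (via $C$ being homotopically projective in $\Cpx k$) to conclude that any homotopically projective $B$-$C$-bimodule, and in particular $V$, is homotopically projective as a left dg $B$-module. So you need to first argue that one may replace $C$ by a cofibrant dg algebra without changing $\X$. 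The paper does this by taking a trivial fibration $g\dd C' \to C$ with $C'$ cofibrant; since $B$ is cofibrant, the morphism $f\dd B \to C$ lifts to $f'\dd B \to C'$ with $gf' = f$, and as $g$ is a quasi-isomorphism, $-\Lotimes_B C$ and $-\Lotimes_B C'$ have the same kernel. Adding this reduction closes the gap; the remaining steps, including the compatibility of $R$-module structures you flagged, are routine as you anticipate.
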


\begin{proof}
In order to make sense of the expression $H^p(X) \Lotimes_R H^q(C) = 0$, we inspect the fraction
\[
\xymatrix{
& B \ar[dl]_\sigma \ar[dr]^f \\
R && C.
}
\]
The quasi-isomorphism $\sigma$ induces an isomorphism of $k$-algebras $R \cong H^0(B)$ and each cohomology $H^p(C)$ is naturally an $H^0(B)$-module via $f$.

In order to prove the proposition, we may without loss of generality assume that $C$ is a cofibrant dg algebra over $k$. Indeed, otherwise we could take a trivial fibration $g\dd C' \to C$ in $\Dga k$ with $C'$ cofibrant and, $B$ being cofibrant, the map $f\dd B \to C$ would factor through $g$, keeping the class $\X$ unchanged.

After this reduction, we are in the situation of Lemma~\ref{lem:representing-bimod} and can interpret the functor $-\Lotimes_B C\dd \Der R \to \Der C$ as $-\otimes_R Z$ for a suitable dg bimodule ${_RZ_C}$ which is homotopically projective as a complex of left $R$-modules. Proposition~\ref{prop:Kunneth} now yields the equivalences
\begin{align*}
X \in \X & \quad \Longleftrightarrow \quad H^p(X) \in \X \textrm{ for every } p \in \bbZ  \\
         & \quad \Longleftrightarrow \quad \Tor_i^R\big(H^p(X),H^q(Z)\big) = 0 \textrm{ for every } p,q \in \bbZ \textrm{ and } i \ge 0.
\end{align*}

Finally notice that $H^q(Z) \cong H^q(C)$ as left $R$-modules for each $q\in\bbZ$. Indeed, consider the quasi-isomorphism $v\dd {_BV_C} \to {_BC_C}$ from the proof Lemma~\ref{lem:representing-bimod}. Clearly $H^q(C) \cong H^q(V)$ as left $R$-modules via $v$. If we apply $- \otimes_B V$ to the quasi-isomorphisms of dg $B$-modules $\sigma\dd B_B \to R_B$, we get a quasi-isomorphisms $V \cong B \otimes_B V \to R \otimes_B V = Z$ since ${_BV}$ is homotopically projective in $\Cpx{B\op}$, and the induced isomorphisms $H^q(V) \cong H^q(Z)$ are easily checked to be isomorphisms of left $R$-modules.
\end{proof}

We can make the statement of the last lemma even stronger, showing that only the zeroth cohomology is enough to determine the kernel of $- \Lotimes_B C$.

\begin{prop} \label{prop:Ho-is-enough}
Let $R$ be a $k$-algebra \st $\wgldim R\leq 1$ and $f\sigma\inv\dd R\to C$ be a homomorphism in $\Ho\Dga k$. 
Let $X\in \D(R)$; then the following are equivalent:
\begin{enumerate}
\item $X \Lotimes_B C=0$;
\item $X \Lotimes_R H^0(C)=0$;
\item $H^p(X) \Lotimes_R H^0(C)=0$ for every $p \in \bbZ$.
\end{enumerate}
\end{prop}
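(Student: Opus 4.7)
The plan is to use Lemma~\ref{lem:kernel-of-localization} together with Proposition~\ref{prop:Kunneth} to reduce the statement to a purely module-theoretic claim, which can then be settled using $\wgldim R\le 1$.

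First, by Lemma~\ref{lem:kernel-of-localization}, condition (1) is equivalent to $H^p(X) \Lotimes_R H^q(C) = 0$ for every $p, q \in \bbZ$. Specializing at $q = 0$ gives (1) $\Rightarrow$ (3) immediately. The equivalence (2) $\Leftrightarrow$ (3) follows from Proposition~\ref{prop:Kunneth} applied to the stalk complex $Z = H^0(C)$ placed in degree zero. Hence the only nontrivial implication is (3) $\Rightarrow$ (1), and by the above it amounts to showing that $H^p(X) \Lotimes_R H^0(C) = 0$ for every $p$ forces $H^p(X) \Lotimes_R H^q(C) = 0$ for every $p$ and $q$.

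Set $S := H^0(C)$. Because $C$ is a dg algebra, $H^\ast(C)$ is a graded $k$-algebra, so each $H^q(C)$ is canonically a left $S$-module, and the left $R$-action on $H^q(C)$ induced by $f\sigma\inv$ is precisely the restriction of this $S$-action along the ring map $R \cong H^0(B) \xrightarrow{H^0(f)} S$. The problem therefore reduces to proving the following purely module-theoretic claim: \emph{for a right $R$-module $M$ with $M \Lotimes_R S = 0$ and any left $S$-module $N$, one has $M \Lotimes_R N = 0$.} Applied to $M = H^p(X)$ and $N = H^q(C)$, this recovers exactly the vanishing required by Lemma~\ref{lem:kernel-of-localization}.

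To prove the displayed claim, use $\wgldim R\le 1$ to choose a two-term flat resolution $0 \to F_1 \to F_0 \to M \to 0$ of $M$. Then $M \Lotimes_R S$ is represented by the two-term complex $F_1 \otimes_R S \to F_0 \otimes_R S$ of right $S$-modules, and its acyclicity says precisely that this map is an isomorphism of right $S$-modules. Applying the additive functor $- \otimes_S N$ preserves the isomorphism, and under the canonical identifications $(F_i \otimes_R S) \otimes_S N \cong F_i \otimes_R N$ we deduce that $F_1 \otimes_R N \to F_0 \otimes_R N$ is an isomorphism too; but this latter two-term complex represents $M \Lotimes_R N$, whence $M \Lotimes_R N = 0$. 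The only real obstacle is conceptual, namely realizing that the higher cohomologies $H^q(C)$ are not merely $R$-modules but in fact $S$-modules, so that the hypothesis concerning the single slice $q = 0$ can be transported to all $q$ via a single tensor product over $S$; once this is recognized, $\wgldim R \le 1$ collapses the derived tensor product to a two-term complex and the transport is automatic.
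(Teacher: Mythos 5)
Your proof is correct, and the overall strategy is essentially the same as the paper's: reduce to modules via Lemma~\ref{lem:kernel-of-localization} and Proposition~\ref{prop:Kunneth}, then exploit the observation that each $H^q(C)$ is a left $S$-module (because $H^\ast(C)$ is a graded algebra and the $R$-action factors through $R \to H^0(C) = S$) to carry the vanishing from $S$ to $H^q(C)$. The one place you diverge is the final module-theoretic step. You argue directly: take a two-term flat resolution $0 \to F_1 \to F_0 \to M \to 0$ (available since $\wgldim R \le 1$), note that $M \Lotimes_R S = 0$ means $F_1 \otimes_R S \to F_0 \otimes_R S$ is an isomorphism of $S$-modules, and then apply $- \otimes_S N$ together with the cancellation $(F_i \otimes_R S) \otimes_S N \cong F_i \otimes_R N$ to conclude. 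The paper instead establishes $M \otimes_R N = 0$ by the same $S$-module observation, and for the higher $\Tor$'s invokes the Cartan--Eilenberg/Mitchell change-of-rings isomorphism $\Tor_n^R(M,N) \cong \Tor_n^S(M \otimes_R S, N)$, valid when $\Tor_i^R(M,S)=0$ for $i\ge 1$. Your version is more self-contained and elementary -- it avoids citing the mapping theorem and makes the role of $\wgldim R \le 1$ completely transparent -- at the cost of being specific to weak dimension one, whereas the change-of-rings formulation is the argument one would reach for in a more general homological setting.
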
 

\begin{proof}
Consider the class $\X = \Ker(-\Lotimes_B C) \subseteq \Der R$ as in Lemma~\ref{lem:kernel-of-localization}, and denote $S = H^0(C)$. Since $\sigma$ is a quasi-isomorphism, $f\sigma\inv$ induces a homomorphism $R \to S$ of $k$-algebras. In view of Proposition~\ref{prop:Kunneth} we ought to prove that
\[ \X = \big\{ X \in \Der R \mid \Tor_i^R\big(H^p(X),S\big) = 0 \textrm{ for all } p,q \in \bbZ \textrm{ and } i = 0,1 \big\}, \]

The inclusion $\subseteq$ is clear by Lemma~\ref{lem:kernel-of-localization}. For the other inclusion, suppose that $\Tor_i^R\big(H^p(X),S\big) = 0$ for all $p \in \bbZ$ and $i=0,1$. If $q\in \bbZ$ is arbitrary, we have
\[ H^p(X) \otimes_R H^q(C) = 0 \qquad \textrm{ for all } q \in \bbZ \]
since $H^q(C)$ is a left $S$-module and $H^p(X) \otimes_R S = 0$ by the assumption. Since we also assume that $\Tor_i^R(H^p(X),S) = 0$ for all $i \ge 1$, we have isomorphisms
\[ \Tor_n^R\big(H^p(X),H^q(C)\big) \cong \Tor_n^S\big(H^p(X) \otimes_R S,H^q(C)\big) \]
for all $p,q \in \bbZ$ and $n \ge 1$ by~\cite[Ch. VI, Proposition 4.1.1]{CE56} or~\cite[The Mapping Theorem]{Mitch73}. The latter term is zero thanks to our assumption that $H^p(X) \otimes_R S = 0$, showing that $X \in \X$ as required.
\end{proof}

Now we aim to state and prove the main result of the section. We remark that a result from~\cite{KrSt} says the same as the theorem below, but under a much more restrictive condition that $R$ is a one-sided hereditary ring.

\begin{thm} \label{thm:smashing-classif}
Let $R$ be a (possibly non-commutative) algebra of weak global dimension at most one over a commutative ring $k$. Then the assignment
\[
f \longmapsto \{ X \in \Der R \mid X \Lotimes_R S = 0 \}
\]
is a bijection between
\begin{enumerate}
\item equivalence classes of homological epimorphisms $f\dd R \to S$ originating at $R$, and
\item smashing localizing subcategories $\X \subseteq \Der R$.
\end{enumerate}
Moreover, the class $\X$ corresponding to a given $f$ consists precisely of the complexes $X \in \Der R$ \st $H^n(X) \otimes_R S = 0 = \Tor_1^R\big(H^n(X), S\big)$ for all $n \in \bbZ$.
\end{thm}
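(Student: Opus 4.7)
My plan is to handle the theorem in three stages, beginning with well-definedness together with the cohomological ``moreover'' description. I view a classical homological ring epimorphism $f\dd R \to S$ as a genuine morphism of dg $k$-algebras with $S$ concentrated in degree zero, so Proposition~\ref{prop:char-hom-epi}(1) gives that $S\Lotimes_R S \to S$ is a quasi-isomorphism, and Proposition~\ref{prop:descr-dga} applied with $\sigma = \unit_R$ produces a smashing class $\X_f = \{X \in \Der R \mid X \Lotimes_R S = 0\}$. Substituting $Z = S$ into the K\"unneth sequence~(\ref{eqn:Kunneth}) from the proof of Proposition~\ref{prop:Kunneth}, and using $\wgldim R \le 1$ to kill higher $\Tor$'s, yields the stated cohomological description of~$\X_f$; equivalence $f \sim f'$ manifestly preserves $\X_f$.

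For surjectivity, I start with a smashing $\X \subseteq \Der R$, invoke Proposition~\ref{prop:descr-dga} for a homological dg epimorphism $g = f\sigma\inv\dd R \to C$ with $\X = \Ker(-\Lotimes_B C)$, and use Proposition~\ref{prop:Ho-is-enough} to set $S = H^0(C)$ and rewrite $\X = \{X \mid X \Lotimes_R S = 0\}$, with the induced candidate ring homomorphism $R \cong H^0(B) \xrightarrow{H^0(f)} S$. The main obstacle---the technical heart of the proof---is to verify that this $R \to S$ is a classical homological ring epimorphism, and my strategy is to analyse the reflection of $R$ into $\X^\perp$. By Proposition~\ref{prop:descr-dga} this reflection is $Y := \sigma^* f_*(C) \in \Der R$ (so $H^i(Y) \cong H^i(C)$) fitting into a triangle $K \to R \xrightarrow{\phi} Y \to \Sigma K$ with $K \in \X$ and with $H^0(\phi)$ realising the map $R \to S$. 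Tensoring over $R$ with $S$ and using $K \Lotimes_R S = 0$ yields an isomorphism $S \xrightarrow{\sim} Y \Lotimes_R S$ in $\Der R$. Applying the K\"unneth sequence~(\ref{eqn:Kunneth}) to $Y \Lotimes_R S$ then gives, for each $n \in \bbZ$, a short exact sequence
\[
0 \to H^n(Y) \otimes_R S \xrightarrow{k_n} H^n(Y \Lotimes_R S) \to \Tor_1^R(H^{n+1}(Y), S) \to 0
\]
whose middle term is $S$ for $n = 0$ and vanishes otherwise. The cases $n \ne 0$ force $\Tor_1^R(H^m(Y), S) = 0$ for all $m \ne 1$. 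For $n = 0$, naturality of K\"unneth identifies the iso $S \xrightarrow{\sim} H^0(Y \Lotimes_R S)$ with the composition $S \xrightarrow{\eta} S \otimes_R S \xrightarrow{k_0} H^0(Y \Lotimes_R S)$, where $\eta(s) = 1 \otimes s$ is a section of the multiplication $\mu_S$; since $k_0$ is injective and the composition is an iso, both $k_0$ and $\eta$ (hence $\mu_S = \eta\inv$) must be isomorphisms. This gives $S \otimes_R S \cong S$ and also $\Tor_1^R(H^1(Y), S) = 0$, which combined with the $n \ne 0$ cases yields $\Tor_1^R(S, S) = \Tor_1^R(H^0(Y), S) = 0$, completing the proof that $R \to S$ is a homological ring epimorphism.

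For injectivity, if $f\dd R \to S$ and $f'\dd R \to S'$ are both homological ring epimorphisms with $\X_f = \X_{f'}$, then by Proposition~\ref{prop:char-hom-epi}(4) the common perpendicular class is the essential image of the fully faithful restrictions $\Der S, \Der{S'} \hookrightarrow \Der R$. Both $R \to S$ and $R \to S'$ are then reflections of $R$ into this subcategory---the targets lie in $\X^\perp$ and the cones lie in $\X$ since they become acyclic upon tensoring with the respective targets by the ring epimorphism property---so uniqueness of reflections yields an $R$-bimodule isomorphism $\ph\dd S \to S'$ with $f' = \ph f$. Since the ring structure of a ring epi quotient is determined by its $R$-bimodule structure via $S \otimes_R S \cong S$, $\ph$ is automatically a ring isomorphism.
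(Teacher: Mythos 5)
Your proposal is correct and follows essentially the same route as the paper: reduce via Propositions~\ref{prop:descr-dga} and~\ref{prop:Ho-is-enough} to a candidate ring map $R\to S=H^0(C)$, apply K\"unneth to the $\X^\perp$-reflection of $R$ tensored with $S$ to deduce $S\otimes_R S\cong S$ and $\Tor_1^R(S,S)=0$, and handle injectivity by comparing essential images of the restriction functors (the paper invokes~\cite[Theorem 1.2]{GdlP87} for the last step you assert directly). The only point worth tightening is your unargued claim that ``$H^0(\phi)$ realises the map $R\to S$'': the paper pins this down by identifying $H^0(W)\cong S_R$ through the chain $S\cong\Der R(W,W)\xrightarrow{\Der R(\eta,W)}\Der R(R,W)\cong H^0(W)$, which is worth spelling out since the K\"unneth naturality argument hinges on it.
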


\begin{proof}
Suppose that $\X \subseteq \Der R$ is a smashing localizing class. Then by Remark~\ref{rem:recollement} and Proposition~\ref{prop:descr-dga}, viewing $R$ as a dg algebra concentrated in degree $0$, there is a homological epimorphism
\[
\xymatrix{
& B \ar[dl]_\sigma \ar[dr]^h \\
R && C,
}
\]
in $\Ho\Dga k$ \st $\sigma^*h_*\dd \Der C \to \Der R$ is fully faithful, the essential image of $\sigma^*h_*$ is $\X^\perp$, and $\X = \{ X \in \Der R \mid X \Lotimes_B C = 0 \}$ in $\Der R$. 
We can assume that $C$ is cofibrant in $\Dga k$ for the same reason as in the proof of Lemma~\ref{lem:kernel-of-localization}.
Let $S = H^0(C)$ and $f = H^0(h)H^0(\sigma)\inv\dd R \to S$ be the induced homomorphisms of ordinary $k$-algebras. By Proposition~\ref{prop:Ho-is-enough} we have
\begin{equation} \label{eqn:ker-S}
\X = \{ X \in \Der R \mid H^p(X) \Lotimes_R S = 0 \textrm{ for all } p \in \bbZ \}. 
\end{equation}

Now we consider the corresponding Bousfield localization functor $(L,\eta)$ from Remark~\ref{rem:recollement}, and the morphism $\eta_R\dd R \to L(R)$ in $\Der R$. As explained in Remark~\ref{rem:recollement}, this is none other than an $\X^\perp$-reflection of $R$ in $\Der R$ and that $L(R) \cong R \Lotimes_B C \Lotimes_B R$. Furthermore, as $\sigma\dd B \to R$ is a quasi-isomorphism, we obtain in $\Der R$ isomorphism
\[ L(R) \cong B \Lotimes_B C \Lotimes_B R \cong C \Lotimes_B R. \]

Observe further that $H^0(L(R)) \cong S_R$ as right $R$-modules. This is since we have isomorphisms of right $R$-modules $H^0(C \Lotimes_B R) \cong H^0(C \Lotimes_B B) \cong H^0(C)=S$, where the $R$-module structure of the second term is induced from the natural $H^0(B)$-module structure via the isomorphism of algebras $H^0(\sigma)\dd H^0(B) \to R$, and the $R$-module structure of the rightmost term is given by the homomorphism $f\dd R \to S$.

Since $L$ is a Bousfield localization, $L\eta = \sigma^*h_*(\eta\Lotimes_B C)$ is an isomorphism. As $\sigma^*h_*$ is fully faithful, $\eta\Lotimes_B C$ must be an isomorphism in $\Der C$. If we denote by $Q$ a mapping cone of $\eta_R$, we therefore have $Q \in \X$, and equality (\ref{eqn:ker-S}) together with Proposition~\ref{prop:Ho-is-enough} imply that $Q \Lotimes_R S = 0$ and $\eta_R \Lotimes_R S$ is a quasi-isomorphism in $\Cpx S$. Let $W_R$ be a homotopically projective resolution of $L(R)$ in $\Cpx R$, so that $L(R) \Lotimes_R S \cong W \otimes_R S$ and $H^0(W) \cong S_R$, and let $\eta'\colon R \to W$ be a lift of $\eta$. Then $\eta'\otimes_R S$ is a quasi-isomorphism. In particular $\eta'$ induces an isomorphism of $S$-modules $t\dd S \to H^0(W \otimes_R S)$ and $H^{-1}(W \otimes_R S) = 0$.

Applying K\"unneth's theorem to $W \otimes_R S$, we obtain a short exact sequence
\[ 0 \la H^0(W) \otimes_R S \mapr{i} H^0(W \otimes_R S) \la \Tor_1^R(H^1(W),S) \la 0 \]
of $S$-modules and the isomorphism $t$ factors as
\[ S \cong R \otimes_S S \overset{f \otimes 1_S}\la S \otimes_R S \overset{H^0(\eta')\otimes 1_S}\la H^0(W) \otimes_R S \overset{i}\la H^0(W \otimes_R S). \]
Thus, the morphism $i$, being a monomorphism and an epimorphism at the same time, is clearly an isomorphism, and so is the multiplication map $S \otimes_R S \to S$. Invoking K\"unneth's theorem once again, we also obtain a short exact sequence
\[ 0 \la H^{-1}(W) \otimes_R S \la H^{-1}(W \otimes_R S) \la \Tor_1^R(H^0(W),S) \la 0, \]
which tells us that $\Tor_1^R(S,S) = 0$. Hence $f\dd R \to S$ is a homological epimorphism.

Finally notice that if two homological epimorphisms $f\dd R \to S$ and $f'\dd R \to S'$ induce the same smashing localizing subcategory $\X$, then the essential images of $f_*\dd \Der S \to \Der R$ and $f'_*\dd \Der{S'} \to \Der R$ also coincide by Remark~\ref{rem:recollement}. Clearly also $\Img f_* \cap \ModR = \Img f'_* \cap \ModR$, which implies that the essential images of the restriction functors
\[ \Modr S \la \ModR \qquad \textrm{and} \qquad \Modr{S'} \la \ModR \]
are the same (see~\cite[Lemma 4.6]{AKL11}). Hence $f$ and $f'$ are equivalent homological epimorphisms by~\cite[Theorem 1.2]{GdlP87}.
\end{proof}

\section{A direct module theoretic approach}
\label{sec:loc-modules}

It is rather clear from the previous results that smashing localizations of $\Der R$ for an algebra of weak global dimension at most one can be mostly described using module categories rather than invoking derived categories. We will show here how this approach can be worked out.

Suppose $R$ is a $k$-algebra \st $\wgldim R \le 1$ and $\X \subseteq \Der R$ is a smashing localizing class. Then Theorem~\ref{thm:smashing-classif} and~\cite[Lemma 4.6]{AKL11} imply that given the corresponding TTF triple $(\X,\Y,\Z)$ in $\Der R$, there is a homological ring epimorphism $f\dd R \to S$ \st
\begin{align*}
\X &= \big\{ X \in \Der R \mid H^n(X) \otimes_R S = 0 = \Tor_1^R\big(H^n(X), S\big) \textrm{ for all } n \in \bbZ \big\},  \\
\Y &= \big\{ Y \in \Der R \mid H^n(Y) \in \Modr S \textrm{ for all } n \in \bbZ \big\}.
\end{align*}
Thus, both $\X$ and $\Y$ are determined by their intersections with $\ModR$, which we denote $\X_0$ and $\Y_0$, respectively. Adjusting the results from~\cite[\S2]{KrSt}, we will see that $(\X_0,\Y_0)$ is a what is called an Ext-orthogonal pair there.

\begin{thm} \label{thm:ext-ortho}
Let $R$ be a $k$-algebra of $\wgldim R \le 1$ and let $f\dd R \to S$ be a homological epimorphism. Denote
\[
\X_0 = \big\{ X' \in \ModR \mid X' \otimes_R S = 0 = \Tor_1^R(X', S) \big\}.
\]
Then, given any $M \in \ModR$, there is a $5$-term exact sequence
\[ \varepsilon_M\dd \quad 0 \to \Tor_1^R(M, S) \to X_M \to M \to M \otimes_R S \to X^M \to 0. \]
Moreover, the map $X_M \to M$ is an $\X_0$-coreflection and $M \to M \otimes_R S$ is a $(\Modr S)$-reflection. Therefore, $\varepsilon_M$ is unique up to a unique isomorphism and functorial in $M$, and the functor $M \mapsto \varepsilon_M$ commutes with direct limits.
\end{thm}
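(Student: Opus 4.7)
The plan is to extract $\varepsilon_M$ from the TTF triple $(\X,\Y,\Z)$ on $\Der R$ corresponding to $\X_0$ via Theorem~\ref{thm:smashing-classif}. Viewing $M \in \ModR$ as a stalk complex in degree $0$, the TTF triple produces a triangle
\[ X \la M \la Y \la \Sigma X \]
in $\Der R$ with $X \in \X$ and $Y \in \Y$. Since $\Y$ is the essential image of $f_*\dd \Der S \to \Der R$ and this embedding has the left adjoint $-\Lotimes_R S$, we have $Y \cong M \Lotimes_R S$. The assumption $\wgldim R \le 1$ forces $M \Lotimes_R S$ to have cohomology concentrated in degrees $-1$ and $0$, namely $H^{-1}(Y) = \Tor_1^R(M,S)$ and $H^0(Y) = M \otimes_R S$. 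The long exact cohomology sequence of the triangle then forces $H^n(X) = 0$ for $n \notin \{0,1\}$ and yields the desired 5-term sequence upon setting $X_M := H^0(X)$ and $X^M := H^1(X)$. The last assertion of Theorem~\ref{thm:smashing-classif} also guarantees $X_M, X^M \in \X_0$.

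Next I would verify the two universal properties. Since $f$ is a ring epimorphism, the restriction $\Modr{S} \to \ModR$ is fully faithful, so standard tensor--hom adjunction identifies $M \to M \otimes_R S$ as the $(\Modr{S})$-reflection of $M$. For the coreflection, pick $X' \in \X_0$ and apply $\Hom_{\Der R}(X',-)$ to the canonical truncation triangle
\[ H^0(X) \la X \la H^1(X)[-1] \la \Sigma H^0(X); \]
noting that $\Hom_{\Der R}(X', H^1(X)[-n]) = \Ext_R^{-n}(X',H^1(X)) = 0$ for $n=1,2$, this gives an isomorphism $\Hom_R(X',X_M) \cong \Hom_{\Der R}(X',X)$. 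Composing with the isomorphism $\Hom_{\Der R}(X',X) \cong \Hom_{\Der R}(X',M) = \Hom_R(X',M)$ provided by the $\X$-coreflection property of $X \to M$, we conclude that $X_M \to M$ is the $\X_0$-coreflection.

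Uniqueness of $\varepsilon_M$ up to a unique isomorphism together with functoriality in $M$ are then immediate from the universal properties of the reflection and the coreflection. For compatibility with direct limits, both $-\otimes_R S$ and $\Tor_1^R(-,S)$ commute with $\varinjlim$, hence $\X_0$ is closed under direct limits and $X^M = \Coker(M \to M \otimes_R S)$ has the same property. Taking $\varinjlim$ of $\varepsilon_{M_i}$ term by term preserves exactness, and the resulting sequence has outer terms realizing the reflection and coreflection of $\varinjlim M_i$; by the uniqueness just established it must coincide with $\varepsilon_{\varinjlim M_i}$. The main subtlety I anticipate is precisely the collapse of the truncation triangle in the second paragraph: it relies quietly on $\wgldim R \le 1$ to ensure $X$ has only two non-vanishing cohomologies, and on $X' \in \X_0$ to kill the relevant $\Ext$ groups so that the truncation isolates $H^0(X)$.
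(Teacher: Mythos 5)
Your proof is correct and takes the same route as the paper: both derive $\varepsilon_M$ from the cohomology long exact sequence of the triangle $X \to M \to M \Lotimes_R S \to \Sigma X$ coming from the TTF triple associated to $f$, and both read off $X_M, X^M \in \X_0$ from Theorem~\ref{thm:smashing-classif}; the paper simply defers the universal-property and direct-limit verifications to \cite[Lemmas~2.9 and~5.3(2)]{KrSt}, where you supply direct arguments instead. One small misattribution in your closing remark: the vanishings $\Hom_{\Der R}\big(X', H^1(X)[-n]\big) = \Ext_R^{-n}\big(X',H^1(X)\big) = 0$ for $n=1,2$ hold automatically for \emph{any} module $X'$ (negative $\Ext$ groups vanish), so the truncation identifies $\Hom_R(X',H^0(X))$ with $\Hom_{\Der R}(X',X)$ unconditionally---the hypothesis $X' \in \X_0$ is really used only in the next step $\Hom_{\Der R}(X',X) \cong \Hom_{\Der R}(X',M)$, via the fact that $X \to M$ is the $\X$-coreflection in $\Der R$.
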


\begin{proof}
Let $\X = \{ X \in \Der R \mid H^p(X) \in \X_0 \textrm{ for all } p \in \bbZ\}$ be the smashing subcategory corresponding to $f$. In order to obtain $\varepsilon_M$, we simply consider the triangle
\[
X \la M \la M \Lotimes_R S \la \Sigma X
\]
as in \S\ref{subsec:Bousfield} with $X \in \X$ and apply the cohomology functor. Note that $M \to M \Lotimes_R S$ is a $\Y$-reflection functor by Remark~\ref{rem:recollement} and $X \to M$ is an $\X$-coreflection. It follows from Theorem~\ref{thm:smashing-classif} that $X_M, X^M \in \X_0$. Further, $\Ext_R^i(X',N) = 0$ for each $i \ge 0$, $X' \in \X_0$ and $N \in \Modr S$ because of the TTF triple from Remark~\ref{rem:recollement}. The fact that we have an $\X_0$-coreflection and $(\Modr S)$-reflection in $\varepsilon_M$ was proved in~\cite[Lemma 2.9]{KrSt}. Finally, since both $\X_0$ and $\Modr S$ are closed under direct limits in $\ModR$, the assignment $M \mapsto \varepsilon_M$ commutes with direct limits by the same argument as for~\cite[Lemma 5.3(2)]{KrSt}.
\end{proof}

Furthermore, the essential images of $\Modr S \to \ModR$ for homological epimorphisms $f\dd R \to S$ are simply determined by closure properties:

\begin{prop} \label{prop:hom-epi-semihered}
Let $R$ be a $k$-algebra of weak global dimension at most one. Then the assignment $f \mapsto \A = \Img f_*$ yields a bijective correspondence between
\begin{enumerate}
 \item equivalence classes of homological epimorphisms $f\dd R \to S$, and
 \item subcategories $\A \subseteq \ModR$ which are closed under limits, colimits and extensions.
\end{enumerate}
\end{prop}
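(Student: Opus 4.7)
The proof proceeds in three parts---well-definedness, injectivity, and (the substantive) surjectivity of $f \mapsto \Img f_*$. For a homological epimorphism $f\dd R\to S$, the restriction functor $f_*$ is fully faithful (since $f$ is a ring epimorphism) and admits both a left adjoint $-\otimes_R S$ and a right adjoint $\Hom_R(S,-)$, so $\A=\Img f_*$ is closed under all small limits and colimits in $\ModR$. For closure under extensions, take $0\to A\to M\to A'\to 0$ in $\ModR$ with $A,A'\in\Modr S$: Proposition~\ref{prop:char-hom-epi}(2) gives $\Tor_i^R(-,S)\cong\Tor_i^S(-,S)$ on $\Modr S$, so $\Tor_1^R(A,S) = 0 = \Tor_1^R(A',S)$; the Tor long exact sequence yields $\Tor_1^R(M,S) = 0$ and an exact sequence $0\to A\to M\otimes_R S\to A'\to 0$, which by the five-lemma forces the unit $M \to M\otimes_R S$ to be an isomorphism, hence $M \in \A$. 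Injectivity of the assignment is then exactly the statement of~\cite[Theorem~1.2]{GdlP87}, used as in the closing paragraph of the proof of Theorem~\ref{thm:smashing-classif}.

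For surjectivity, let $\A\subseteq\ModR$ be closed under limits, colimits and extensions. Closure under limits together with the Grothendieck structure of $\ModR$ yields a left adjoint $L\dd \ModR \to \A$ to the inclusion by the special adjoint functor theorem, while closure under colimits ensures that colimits in $\A$ coincide with those in $\ModR$, so $L$ preserves them. Set $S = L(R)$ with unit $f\dd R \to S$. Since short exact sequences in $\A$ agree with those in $\ModR$ (closure under kernels and cokernels), the functor $\Hom_\A(S,-) = \Hom_R(R,-)|_\A$ is exact, coproduct-preserving and faithful on $\A$; equivalently, $S$ is a compact projective generator of $\A$. A Watts-style theorem now provides an equivalence $\A\simeq\Modr{\End_\A(S)}$, and the canonical identifications $\End_\A(S)\cong\Hom_R(R,S) = S$ promote $S$ to a ring and $f$ to a ring homomorphism under which this equivalence becomes the restriction functor $f_*$. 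In particular $f$ is a ring epimorphism with $\Img f_* = \A$.

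The final, and main, obstacle is to show that $f$ is \emph{homological}. Since $\wgldim R\le 1$ automatically gives $\Tor_n^R(S,S) = 0$ for $n\ge 2$, only the vanishing of $\Tor_1^R(S,S)$ remains, and I would prove it by a Pontryagin duality trick. Let $S^+ = \Hom_\bbZ(S,\bbQ/\bbZ)$ be the Pontryagin dual of $S$ viewed as a left $R$-module. The left $S$-action on $S$ endows $S^+$ with a right $S$-module structure whose restriction along $f$ recovers its canonical right $R$-module structure, so $S^+ \in \A$. Any extension $0 \to S^+ \to M \to S \to 0$ in $\ModR$ therefore has $M \in \A = \Modr S$ by closure under extensions, so it is $S$-linear and splits because $S$ is projective over itself. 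Hence $\Ext^1_R(S,S^+) = 0$, and the universal coefficient isomorphism $\Ext^1_R(S,S^+) \cong \Tor_1^R(S,S)^+$ together with the cogenerator property of $\bbQ/\bbZ$ forces $\Tor_1^R(S,S) = 0$, completing the proof.
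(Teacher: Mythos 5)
Your argument reproduces the paper's in its two substantive steps: the $\Tor$ long exact sequence argument showing $\Img f_*$ is closed under extensions is the paper's, and the Pontryagin duality computation $\Tor_1^R(S,S)^+ \cong \Ext_R^1(S,S^+) \cong \Ext_S^1(S,S^+) = 0$, using closure of $\A$ under extensions and projectivity of $S$ over itself, is literally the displayed chain of isomorphisms in the paper's proof. Injectivity via~\cite[Theorem~1.2]{GdlP87} is also what the paper does.

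The one place you deviate is in producing the ring epimorphism $f\dd R \to S$ with $\Img f_* = \A$. The paper simply invokes~\cite[Theorem~1.2]{GdlP87} for existence and uniqueness, whereas you sketch an independent derivation via the special adjoint functor theorem and Mitchell's recognition of module categories. As written this has a gap: applying SAFT to $i\dd \A \hookrightarrow \ModR$ requires a small cogenerating set \emph{in $\A$}, and this does not follow from ``the Grothendieck structure of $\ModR$'' because $\A$, while closed under kernels, is not closed under arbitrary subobjects (already for $\Modr{\bbQ} \hookrightarrow \Modr{\bbZ}$ the subobject $\bbZ\subseteq\bbQ$ leaves $\Modr{\bbQ}$, and the cogenerator $\bbQ/\bbZ$ of $\Modr{\bbZ}$ is not in $\Modr{\bbQ}$), so one cannot just restrict a cogenerator of $\ModR$ to $\A$. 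This existence statement is exactly what~\cite[Theorem~1.2]{GdlP87} supplies, so citing it directly is both cleaner and closes the gap; with that substitution your proof coincides with the paper's.
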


\begin{proof}
If we start with $\A$ as in (2), there is an algebra epimorphism $f\dd R \to S$ \st $\Img f_* = \A$ and $f$ is unique up to equivalence; see~\cite[Theorem 1.2]{GdlP87}. If we denote by $DM$ the character module $\Hom_Z(M,\bbQ/\bbZ)$, then
\[ D\Tor_1^R(S,S) \cong \Ext^1_R(S,DS) \cong \Ext^1_S(S,DS) = 0, \]
where the second isomorphism follows from the fact that $\A$ is closed under extensions. So $f\dd R \to S$ is a homological epimorphism.

Suppose conversely that $f\dd R \to S$ is a homological epimorphism. Then $\Img f_*$ is closed under limits, colimits and extensions even without any restriction on $R$. Indeed, $\Img f_*$ is closed under limits and colimits since the forgetful functor $f_*\dd\Modr{S} \to \ModR$ is fully faithful and preserves limits and colimits.
Suppose that we have a short exact sequence $0 \to X \to Y \to Z \to 0$ in $\ModR$ \st $X,Z \in \Img f_*$. Then we have a commutative diagram with isomorphisms in the two marked columns
\[
\begin{CD}
        0     @>>>       X        @>>>       Y       @>>>       Z       @>>>     0     \\
@. @V{X \otimes_R f}V{\cong}V @V{Y \otimes_R f}VV @V{Z \otimes_R f}V{\cong}V           \\
\Tor_1^R(Z,S) @>>> X \otimes_R S  @>>> Y \otimes_R S @>>> Z \otimes_R S @>>>     0     \\
\end{CD}
\]
Since $\Tor_1^R(Z,S) \cong \Tor_1^S(Z,S) = 0$ by Proposition~\ref{prop:char-hom-epi}, $Y \otimes_R f$ is an isomorphism and $Y \in \Img f_*$.
\end{proof}

Finally, we mention a relation to universal localizations.

\begin{defn} \label{def:univ-loc} \cite[Ch. 4]{Scho85}
A ring homomorphism $f\dd R\to S$ is called a \emph{universal localization} if there exists a set $\mathfrak{S}$ of morphisms between finitely generated projective $R$-modules such that
\begin{enumerate}
\item $\sigma \otimes_R S$ is an isomorphism of $S$-modules for all $\sigma \in\mathfrak{S}$, and
\item every ring homomorphism $R\to S'$ such that $\sigma \otimes_R S'$ is an isomorphism of $S'$-modules for all $\sigma \in \mathfrak{S}$ factors uniquely through $f\dd R\to S$.
\end{enumerate} 
\end{defn}

Note that for any set $\mathfrak{S}$ of morphisms between finitely generated projective $R$-modules there is a corresponding universal localization by~\cite[Theorem 4.1]{Scho85}. If $R$ is a $k$-algebra, so is naturally $S$ via the composite structure map $k \to R \to S$. Moreover, the following hold:

\begin{lem} \label{lem:univ-loc-basic}
Let $R$ be a $k$-algebra and $f\dd R\to S$ be a universal localization at $\mathfrak{S}$. Then $f$ is a ring epimorphism, $\Tor^1_R(S,S)=0$, and the essential image of the forgetful functor $f_*\dd \Modr S \to \ModR$ consists precisely of the modules $M$ \st $\Hom_R(\sigma,M)$ is an isomorphism for each $\sigma \in \mathfrak{S}$.
\end{lem}

\begin{proof}
The universal localization is a ring epimorphism by the construction in~\cite[Theorem 4.1]{Scho85}, and $\Tor^1_R(S,S)=0$ by~\cite[Theorems 4.7 and 4.8]{Scho85}. In particular, $f_*$ is fully faithful by the discussion in \S\ref{subsec:hom-epi-rings}.

It remains to determine the essential image of $f_*$. Although this seems to be known to experts (compare with the proof of \cite[Theorem 4.7]{Scho85}), we give a full argument since we lack a good reference. To start with, for each $P,M \in \ModR$ and a $k$-module $N$, there is a homomorphism
\[ \varphi_{P,M,N}\dd P \otimes_R \Hom_k(M,N) \la \Hom_k(\Hom_R(P,M),N), \]
natural in all $P,M,N$, given by $\varphi_{P,M,N}(p\otimes u)(g) = u(g(p))$, where $p\in P$, $u \in \Hom_k(M,N)$ and $g \in \Hom_R(P,M)$. Since $\varphi_{P,M,N}$ is an isomorphism for $P = R_R$, it is also an isomorphism whenever $P$ is finitely generated projective over $R$. In particular, if $P$ is finitely generated projective and $M = N$, we have a natural isomorphism
\[ \varphi_{P,M,M}\dd P \otimes_R \End_k(M) \la \Hom_k(\Hom_R(P,M),M). \]

Suppose now that $M \in \ModR$. If $\Hom_R(\sigma,M)$ is an isomorphism for all $\sigma\in\mathfrak{S}$, so are $\Hom_k(\Hom_R(\sigma,M),M)$ and also $\sigma \otimes_R \End_k(M)$. Hence the structure morphism $R \to \End_k(M), r \mapsto -\cdot r$ extends to $S \to \End_k(M)$ by the universal property of $f$, giving $M$ an $S$-module structure.

If conversely $M \in \Img f_*$, then $\Hom_R(\sigma,M)$ is an isomorphism is an isomorphism \iff $\Hom_S(\sigma\otimes_RS,M)$ (we use that $f_*\dd \ModR \to \Modr{S}$ has a left adjoint $-\otimes_RS$). However, $\sigma\otimes_RS$ is an isomorphism for each $\sigma\in\mathfrak{S}$ by the definition of $f$.
\end{proof}

The final proposition will be useful for future reference, especially in connection with the Telescope Conjecture studied in Section~\ref{sec:tc}.

\begin{prop} \label{prop:univ-loc-wd1}
Let $R$ be an algebra of weak global dimension at most one. If $f\dd R \to S$ is a universal localization, it is a homological epimorphism and it corresponds to a compactly generated localization in the correspondence of Theorem~\ref{thm:smashing-classif}.

If, moreover, $R$ is right semihereditary (that is, every finitely generated submodule of a right projective module is again projective), then the bijection from Theorem~\ref{thm:smashing-classif} restricts to a bijection between
\begin{enumerate}
\item equivalence classes of universal localizations $f\dd R \to S$, and
\item compactly generated localizing subcategories $\X \subseteq \Der R$.
\end{enumerate}
\end{prop}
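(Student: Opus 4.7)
The plan for Part~1 is to convert the universal localization $f\dd R\to S$ into a compactly generated smashing subcategory of $\Der R$ and then to identify $f$ with the homological epimorphism produced by Theorem~\ref{thm:smashing-classif}. Fix a set $\mathfrak{S}$ of morphisms between finitely generated projective $R$-modules that witnesses $f$ being a universal localization, and for each $\sigma\dd P_\sigma\to Q_\sigma$ in $\mathfrak{S}$ let $C_\sigma\in\Der R$ denote the two-term complex $[P_\sigma\to Q_\sigma]$. Each $C_\sigma$ is compact in $\Der R$ and, by the defining property of $f$, satisfies $C_\sigma\Lotimes_RS=0$. Setting $\X:=\Loc\{C_\sigma\mid\sigma\in\mathfrak{S}\}$ produces a compactly generated (hence smashing) localizing subcategory, which by Theorem~\ref{thm:smashing-classif} corresponds to some homological ring epimorphism $g\dd R\to T$. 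It remains to prove that $g$ is equivalent to $f$ in the sense of \S\ref{subsec:hom-epi-rings}.

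The equivalence $f\sim g$ I would establish via two mutually inverse ring homomorphisms. Since $C_\sigma\in\X$, the map $\sigma\otimes_RT$ is invertible for every $\sigma\in\mathfrak{S}$, and the universal property of $f$ yields a unique $\bar f\dd S\to T$ with $g=\bar f f$. To build an inverse, I would verify $S\in\X^\perp$; applying $\RHom_R(-,S)$ to the triangle $P_\sigma\to Q_\sigma\to C_\sigma\to P_\sigma[1]$ and using the projectivity of $P_\sigma,Q_\sigma$ reduces the task to checking that $\Hom_R(\sigma,S)$ is an isomorphism, which via the identification $\Hom_R(Q_\sigma,S)\cong Q_\sigma^*\otimes_RS$ is equivalent to $\sigma^*\otimes_RS$ being invertible---automatic because $\sigma\otimes_RS$ is an isomorphism of finitely generated projective $S$-modules. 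Theorem~\ref{thm:ext-ortho} then places $S$ in the essential image of restriction along $g$, so $S$ acquires a $T$-module structure refining its $R$-module structure; since $f$ is a ring epimorphism, $\End_R(S)\cong S$, and this $T$-action is encoded by a ring map $h\dd T\to S$ satisfying $hg=f$. The relations $g=\bar f f$ and $f=hg$, together with $f$ and $g$ being ring epimorphisms, force $h\bar f=\mathrm{id}_S$ and $\bar f h=\mathrm{id}_T$, so $\bar f$ is an isomorphism and $f$ is equivalent to the homological epimorphism $g$.

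For Part~2 the right semihereditarity of $R$ lets me represent every compact object of $\Der R$ by two-term complexes of finitely generated projectives. Indeed, every finitely presented right $R$-module admits a projective resolution $0\to P\to Q\to M\to 0$ by finitely generated projectives, and by repeatedly splitting off projective images of differentials one checks that any bounded complex of finitely generated projectives decomposes in $\Der R$ as a finite direct sum of shifts of such two-term complexes. Consequently any compactly generated smashing $\X\subseteq\Der R$ admits a generating set of the shape $\{C_\sigma\mid\sigma\in\mathfrak{S}\}$, and the universal localization at $\mathfrak{S}$ then corresponds to $\X$ under Theorem~\ref{thm:smashing-classif} by Part~1; combined with the injectivity inherited from Theorem~\ref{thm:smashing-classif}, this produces the claimed bijection. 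The main obstacle I anticipate is the step $S\in\X^\perp$, which mixes a dualization argument for the morphisms of $\mathfrak{S}$ with careful bookkeeping of the factorizations $\bar f$ and $h$ through the ring-epimorphism property; once this is in place, everything else is a routine application of Theorem~\ref{thm:smashing-classif} and the semihereditary decomposition.
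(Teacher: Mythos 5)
Your argument for Part~1 is correct but takes a genuinely different route from the paper. The paper's proof simply quotes Schofield (\cite[Theorems 4.1, 4.7, 4.8]{Scho85}) to get that $f$ is a ring epimorphism with $\Tor_1^R(S,S)=0$, hence a homological epimorphism in weak dimension $\le 1$, and then lifts the module-level characterization of $\Img f_*$ to $\Der R$ via \cite[Lemma 4.6]{AKL11} to recognize the localization as the one compactly generated by the two-term complexes $C_\sigma$. You instead start by constructing the compactly generated $\X=\Loc\{C_\sigma\}$, pull out the homological epimorphism $g\dd R\to T$ from Theorem~\ref{thm:smashing-classif}, and then exhibit mutually inverse ring homomorphisms $\bar f\dd S\to T$ (from the universal property of $f$, using $\sigma\otimes_R T$ invertible because $C_\sigma\Lotimes_R T=0$) and $h\dd T\to S$ (from $S\in\X^\perp$). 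Your route avoids the Tor-vanishing computations of \cite{Scho85} beyond the ring-epimorphism assertion, at the cost of the somewhat more delicate bookkeeping needed to produce $h$ and verify the two-sided inverse identities; it also yields that $f$ is a homological epimorphism only indirectly, by identifying it with $g$. The paper's proof is shorter and more transparent once one accepts the cited results.

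Two small remarks on your write-up. First, the appeal to Theorem~\ref{thm:ext-ortho} to place $S$ in $\Img g_*$ is slightly misdirected: what you actually need is the identification $\X^\perp\cap\ModR=\Modr T$ from the TTF triple discussion preceding that theorem (itself a consequence of Theorem~\ref{thm:smashing-classif} and \cite[Lemma 4.6]{AKL11}). Second, the step ``this $T$-action is encoded by a ring map $h\dd T\to S$'' deserves a sentence of justification: set $h(t)=1_S\cdot_T t$, check that $h$ is multiplicative using that left multiplication by any $h(t_1)$ is an $R$-linear (hence, by full faithfulness of $g_*$, $T$-linear) endomorphism of $S$, and note $hg=f$ because the $T$-action restricts to the $R$-action via $g$. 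The remainder of your inverse argument then goes through exactly as you state, since both $f$ and $g$ are ring epimorphisms. Part~2 is essentially the paper's argument, just phrased in terms of splitting off two-term complexes rather than decomposing $C\cong\bigoplus_n H^n(C)[-n]$; these amount to the same use of the hereditary structure of $\modR$.
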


\begin{proof}
Since $\Tor^R_n(S,S) = 0$ for $n \ge 2$ by assumption, $f$ is a homological epimorphism by Lemma~\ref{lem:univ-loc-basic}. Since a complex $Y$ is in the essential image of $f_*\dd \Der S \to \Der R$ \iff its cohomology is an $S$-module (see~\cite[Lemma 4.6]{AKL11}), we have
\begin{align*}
\Img f_* &=
\{ Y \in \Der R \mid \Hom_R(\sigma,H^n(Y)) \textrm{ is an iso for all } \sigma \in \mathfrak{S} \textrm{ and } n \in \mathbb{Z} \} \\ 
&= \{ Y \in \Der R \mid \RHom_R(\sigma,Y) \textrm{ is an isomorphism for all } \sigma \in \mathfrak{S} \}.
\end{align*}
Thus, the smashing localization of $\Der R$ corresponding to $f$ in the sense of Theorem~\ref{thm:smashing-classif} is generated by $\mathfrak{S}$, when we view its elements as 2-term perfect complexes.

Suppose now that $R$ is right semihereditary. Then $R$ is right coherent and the category $\modR$ of finitely presented right $R$-modules is hereditary abelian. Moreover, if $C$ is a perfect complex in $\Der R$, then $C=\bigoplus_{n\in \bbZ}H^n(C)[-n]$ and for every $n\in \bbZ$, $H^n(C)$ is a finitely presented module (see for instance \cite[Section 2.5]{Kel07} or \cite[Section 1.6]{Kr07}). Thus, any compactly generated localization of $\Der R$ is generated by a set of finitely presented $R$-modules and this yields precisely the same result as universally inverting (any choice of) projective resolutions of these modules.
\end{proof}

\section{The classification for valuation domains}
\label{sec:classif-vd}

Now we are in a position to classify all smashing localizations (equivalently: homological epimorphisms) for valuation domains $R$, i.e.\ commutative domains whose ideals are totally ordered by inclusion. This will also reveal the amount of information which we need to know about $R$ in order to reconstruct the lattice of smashing localizations (see~\cite{Kr05-telescope,BaFa11}): Knowing just the Zariski spectrum as a topological space is not enough (see Example~\ref{expl:Puiseux} below), we also need to know which of the prime ideals are idempotent.

For properties of ideals of valuation domains we refer to \cite[Chapter II]{FS01}. In the sequel we will use without further mention that the kernel of a homological ring epimorphism $\phi\colon R\to S$ is an idempotent ideal (see Lemma~\ref{lem:kernel-idempotent}) and that an idempotent ideal of a valuation domain is a prime ideal. Note also that if $\p \subseteq \q$ are prime ideals, then $\p$ is canonically an $R_\q$-module, so that $\p_\q = \p$ and $(R/\p)_\q = R_\q/\p$.

Before starting our work on the classification, we state a useful lemma which also explains why valuation domains are a natural starting point.

\begin{lem} \label{lem:locally-VD}
Let $R$ be a commutative ring. Then $\wgldim R\leq 1$ \iff $R_\p$ is a valuation domain for every prime ideal $\p$ of $R$.
\end{lem}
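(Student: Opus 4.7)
My plan is to reduce both implications to the case of a local ring by exploiting two standard facts: $\wgldim R \le 1$ is equivalent to every ideal of $R$ being flat (which follows from the short exact sequence $0\to I\to R\to R/I\to 0$ via the $\Tor$ long exact sequence), and flatness of an $R$-module can be tested locally at each prime.

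For the direction $(\Rightarrow)$, exactness of localization gives $\wgldim R_\p \le \wgldim R \le 1$ at each prime $\p$, so it suffices to prove that a commutative local ring $S$ with $\wgldim S \le 1$ is a valuation domain. Every finitely generated ideal of $S$ is flat, and by Kaplansky's theorem a finitely generated flat module over a local ring is free, so every finitely generated ideal is free. A free ideal of a commutative ring has rank at most one: if $I=Sc_1\oplus Sc_2$, then the element $c_1c_2\in I$ admits two direct-sum decompositions $c_2\cdot c_1+0\cdot c_2$ and $0\cdot c_1+c_1\cdot c_2$, forcing $c_1=c_2=0$, a contradiction. Hence every finitely generated ideal is principal and either $0$ or of the form $Sc$ with $\mathrm{ann}(c)=0$, so every nonzero element of $S$ is a non-zero-divisor and $S$ is a domain; thus $S$ is a local B\'ezout domain. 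Given nonzero $a,b\in S$, writing $(a,b)=(c)$ together with $a=uc$, $b=vc$ and $c=xa+yb$ yields the relation $xu+yv=1$, and locality forces $u$ or $v$ to be a unit; so the principal ideals $(a),(b)$ are comparable, and $S$ is a valuation domain.

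For the direction $(\Leftarrow)$, let $I$ be an ideal of $R$. Since flatness is a local property, it suffices to show that $I_\p$ is flat over $R_\p$ for each prime $\p$. Now $I_\p$ embeds as an ideal of the valuation domain $R_\p$, and any ideal of a valuation domain is the directed union of its finitely generated (and hence, by total ordering of ideals, principal) subideals. In a domain a nonzero principal ideal $(c)$ is isomorphic to the ring itself via multiplication by $c$, hence free of rank one; so $I_\p$ is a directed colimit of flat modules, therefore flat, and $\wgldim R\le 1$ follows. The main delicate point in the argument is the rank bound for free ideals in the local case; once this is in hand, the remaining steps are elementary and the two directions fit together by the same local--global principle.
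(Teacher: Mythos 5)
Your proof is correct, complete, and takes the expected route; the paper itself simply cites Glaz's book (Corollary 4.2.6 of \emph{Commutative Coherent Rings}) and does not include an argument, so the comparison is really between your write-up and the standard textbook proof. Your reduction to the local case on both sides is exactly the right structure. In the forward direction, the chain ``finitely generated ideal flat $\Rightarrow$ free (Kaplansky) $\Rightarrow$ rank $\le 1$ $\Rightarrow$ principal, with generator a non-zero-divisor $\Rightarrow$ local B\'ezout domain $\Rightarrow$ valuation domain'' is sound, and you correctly isolate the genuinely nontrivial step, namely the rank bound for a free ideal: the element $c_1c_2$ having the two expressions $c_2\cdot c_1+0\cdot c_2$ and $0\cdot c_1+c_1\cdot c_2$ in the free module $Sc_1\oplus Sc_2$ forces $c_1=c_2=0$, which is incompatible with $\{c_1,c_2\}$ being a basis. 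The subsequent Bezout-to-valuation step via $xu+yv=1$ and locality is clean. In the reverse direction, using that flatness is local and that an ideal of a valuation domain is a directed union of principal (hence free of rank $\le 1$) ideals is the standard and correct argument, relying on the characterization of $\wgldim R\le 1$ as ``every ideal of $R$ is flat.'' Nothing is missing; your argument could stand as a self-contained replacement for the literature citation.
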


\begin{proof}
We refer to~\cite[Corollary 4.2.6]{Glaz89}.
\end{proof}

\subsection{From a homological epimorphism to a collection of intervals}
\label{subsec:hom-epi-to-intervals}

We will first show that a homological epimorphism $f\dd R \to S$ naturally induces a collection of disjoint intervals of $\Spec R$ satisfying certain conditions. Idempotent ideals will play an important role and, if $S$ is semilocal, this will readily yield an explicit description of $S$.

\begin{nota} \label{nota:iSpec}
We shall denote the collection of all idempotent ideals of $R$ by $\iSpec R$ and view $(\iSpec R, \subseteq)$ as a totally ordered subset of the Zariski spectrum $(\Spec R, \subseteq)$.
\end{nota}

The following are easy properties of the idempotent spectrum.

\begin{lem} \label{lem:lots-of-idempotents}
Let $R$ be a valuation domain and $\clS \subseteq \Spec R$ be a set. Then:

\begin{enumerate}
\item If $\clS$ has no maximal element with respect to inclusion, then $\bigcup \clS$ is an idempotent ideal. 
\item Any subset $\clS \subseteq \iSpec R$ has a supremum and an infimum in $\iSpec R$.
\end{enumerate}
\end{lem}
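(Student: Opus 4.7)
The plan is to prove the two statements in sequence, with both halves of the second reducing to the first. As a preliminary observation, $R$ being a valuation domain means that its ideals (in particular its prime ideals) are totally ordered by inclusion; hence every subset of $\Spec R$ is automatically a chain, and $\bigcup \clS$ is always a prime ideal (the standard chain argument), as is $\bigcap \clS$ whenever $\clS$ is non-empty.

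For the first statement, set $I = \bigcup \clS$; the only non-trivial point is $I \subseteq I^2$. I expect this to be the main obstacle, and plan to handle it via the following divisibility trick. Given $x \in I$, fix $\p \in \clS$ with $x \in \p$ and, exploiting the absence of a maximal element in $\clS$, pick $\q \in \clS$ with $\p \subsetneq \q$. Choose any $y \in \q \setminus \p$. Since in a valuation domain the principal ideals $(x)$ and $(y)$ are comparable, either $y \mid x$ or $x \mid y$, and the latter would force $y \in (x) \subseteq \p$, contradicting the choice of $y$. Hence $x = ya$ for some $a \in R$; as $ya = x \in \p$ with $y \notin \p$ and $\p$ prime, necessarily $a \in \p \subseteq I$. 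Combined with $y \in \q \subseteq I$ this yields $x \in I^2$.

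The supremum of $\clS \subseteq \iSpec R$ is then immediate: if $\clS$ has a maximum, that element already serves; otherwise the first statement gives $\bigcup \clS \in \iSpec R$, and it is tautologically the least prime containing every member of $\clS$. For the infimum, set $J = \bigcap \clS$ (taking $J = R$ if $\clS = \emptyset$) and consider $\U = \{\q \in \iSpec R : \q \subseteq J\}$, which is non-empty since $R$ is a domain and so $(0) \in \U$. The union $\q^* = \bigcup \U$ is contained in $J$; either $\U$ has a maximum and $\q^*$ equals it, or the first statement applied to $\U$ shows $\q^*$ is idempotent. Either way $\q^* \in \U$ is the maximum of $\U$, and hence the largest element of $\iSpec R$ contained in every member of $\clS$, which is the desired infimum.
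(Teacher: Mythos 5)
Your argument is correct, and the first part takes a more elementary, self-contained route than the paper. The paper invokes the dichotomy from Fuchs--Salce (\cite[Lemma 4.3(iv), p.\ 69]{FS01}) that in a valuation domain the maximal ideal of $R_\p$ is either idempotent or principal, then rules out principality because $\p = \bigcup\clS$ is a strictly ascending union. You instead prove $I\subseteq I^2$ directly by a divisibility argument: given $x \in \p \in \clS$, use the absence of a maximal element to find $\q\in\clS$ properly containing $\p$ and an element $y\in\q\setminus\p$; comparability of $(x)$ and $(y)$ forces $y\mid x$, and primality of $\p$ pushes the cofactor into $\p\subseteq I$. This trades a cited structural result for a short first-principles computation in the valuation domain. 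Your treatment of the second statement (sup via the union, inf via the union of all idempotent primes below $\bigcap\clS$, which is itself idempotent by the first part) is exactly the "direct consequence" the paper leaves to the reader, and it is carried out correctly. The only cosmetic point is the degenerate case $\clS=\emptyset$ in the first statement, where $\bigcup\clS$ should be read as the zero ideal; your argument silently assumes $\clS$ nonempty, but this is a triviality.
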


\begin{proof}
For the first part, one directly checks that $\p = \bigcup \clS$ is a prime ideal. By \cite[Lemma II.4.3(iv) and property (d), p. 69]{FS01} a prime ideal in a valuation domain is either idempotent or it is a principal ideal of the localization $R_\p$. Now, $\p \subseteq R_\p$ is a maximal ideal of $R_\p$ and it cannot be principal in $R_\p$ since $\p = \bigcup \clS$. 
The second statement is a direct consequence of the first one.
\end{proof}

As an initial step in our classification we shall describe flat ring epimorphisms.

\begin{prop} \label{prop:flat-epi-VD}
Let $R$ be a valuation domain and let $f\dd R\to S$ be a flat ring epimorphism. Then $f$ is injective and there is a prime ideal $\p$ of $R$ such that $f$ is equivalent to the localization morphism $R \to R_\p$.
\end{prop}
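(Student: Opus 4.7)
The statement comprises two assertions --- injectivity of $f$ and the identification of $S$ with a localization at a prime --- which I plan to prove in order. We may tacitly assume $S\neq 0$, since for any prime ideal $\p$ the localization $R_\p$ is nonzero, so the case $S=0$ does not arise in the conclusion.

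For injectivity, since $R$ is a domain and $S$ is flat, $S$ is torsion-free as an $R$-module. The annihilator of $1_S\in S$ in $R$ is precisely $\Ker f$, because $r\cdot 1_S = f(r)$ for every $r\in R$. Torsion-freeness of $S$ then forces $\Ker f = 0$.

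For the structural part, let $K$ denote the field of fractions of $R$. Torsion-freeness of $S$ yields an injection $S\hookrightarrow S\otimes_R K$, $s\mapsto s\otimes 1$. Base-changing the flat ring epimorphism $f$ along $R\to K$ produces a flat ring epimorphism $K\to S\otimes_R K$. I would then show in place that any ring epimorphism from a field $K$ to a nonzero ring $T$ is an isomorphism: the epimorphism condition forces $t\otimes 1 = 1\otimes t$ in $T\otimes_K T$ for every $t\in T$, and if $T$ had $K$-dimension larger than one, a $K$-basis of $T$ containing $1$ would witness nontrivial $K$-linear independence between elements of the form $1\otimes t$ and $t\otimes 1$, a contradiction. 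Applied to $T = S\otimes_R K$ (which is nonzero since $S$ embeds into it), this gives $S\otimes_R K\cong K$, and thus $S$ embeds as an $R$-subalgebra of $K$.

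To conclude, I will invoke the classical structure theorem for overrings of a valuation domain \cite[Ch.~II]{FS01}: every ring $T$ with $R\subseteq T\subseteq K$ is of the form $R_\p$ for a uniquely determined $\p\in\Spec R$. This identifies $S$ with $R_\p$ as $R$-algebras, and $f$ becomes the canonical localization $R\hookrightarrow R_\p$. The only mildly delicate point is the in-place lemma that a ring epimorphism from a field is either an isomorphism or zero; the remaining steps are a routine assembly of standard properties of flat modules over a domain and of overrings of valuation domains.
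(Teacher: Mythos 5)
Your proof is correct and follows essentially the same route as the paper: injectivity from torsion-freeness of the flat module $S$, base change to the fraction field to identify $S$ with an intermediate ring $R\subseteq S\subseteq K$, and then identification of $S$ with a localization at a prime. The only cosmetic difference is that you cite the classification of overrings of a valuation domain at the final step, whereas the paper proves this in place by explicitly constructing the saturated multiplicative set $\mathfrak{S}=\{r\in R\setminus\{0\}\mid r^{-1}\in S\}$ and taking $\p=R\setminus\mathfrak{S}$; these are the same argument.
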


\begin{proof} 
The kernel of $f$ must vanish, since $S$ is a flat $R$-module, hence torsion free. Localizing $f$ at the zero ideal of $R$, we obtain the injective ring epimorphism $f \otimes_R Q\dd Q \to S \otimes_R Q$ where $Q$ is the quotient field of $R$. Thus $f \otimes_R Q$ is an isomorphism (see~\cite[Corollary IV.1.3]{Laz69} or apply Proposition~\ref{prop:hom-epi-semihered} to $\Modr Q$) and, since $S$ is flat, we have a ring monomorphism $R\otimes_R S\to Q\otimes_R S$ and thus, up to isomorphism, a chain of ring embeddings $R \subseteq S \subseteq Q$. Now consider the set $\mathfrak{S} = \{ r \in R \setminus \{0\} \mid r\inv \in S \}$. One easily checks that $\mathfrak{S}$ is a saturated (closed under divisors) multiplicative set in $R$, that $\p = R \setminus \mathfrak{S}$ is a prime ideal, and that $S = R_\p$.
\end{proof} 

In order to understand general homological epimorphisms, we establish a connection between the maximal ideals of a homological factor of $R$ and the promised collections of intervals in the poset $(\Spec R,\subseteq)$.

\begin{prop} \label{prop:hom-epi-vd}
Let $R$ be a valuation domain, $0 \ne f\dd R \to S$ be a homological epimorphism, and denote $\ifr = \Ker f$. Then the following hold:
\begin{enumerate}
\item There exists a prime ideal $\p \in \Spec R$ with $\ifr \subseteq \p$ and a surjective homological epimorphism $g\dd S \to R_\p/\ifr$ \st the composition $gf\dd R \to R_\p/\ifr$ is the canonical morphism.
Moreover, there is a unique maximal ideal $\n$ of $S$ \st $g\dd S \to R_\p/\ifr$ is equivalent to the localization of $S$ at $\n$.

\item If $\n$ is a maximal ideal of $S$, then the localization morphism $S \to S_\n$ is surjective and the composition
\[ R \mapr{f} S \mapr{\mathrm{can}} S_\n \]
is a homological epimorphism equivalent to $g\dd R \to R_\q/\jfr$, where $\jfr \subseteq R$ is an idempotent ideal, $\q = f\inv(\n)$ and $\jfr \subseteq \q$.

\item If $\n' \ne \n$ is another maximal ideal and $\jfr' \subseteq \q'$ are the corresponding primes in $R$ with $\jfr'$ idempotent, then the intervals
\[ [\jfr, \q] \qquad \textrm{and} \qquad [\jfr', \q'] \]
in $(\Spec R, \subseteq)$ are disjoint. In particular, we have for all $n \ge 0$ that
\[ \Tor_n^R( R_\q/\jfr, R_{\q'}/\jfr' ) = 0 \]
\end{enumerate}
\end{prop}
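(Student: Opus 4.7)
My plan is to prove the three parts in the order (2), (3), (1), since (2) identifies the local structure of $S$ at each maximal ideal, (3) shows these local pieces are independent, and (1) then isolates the correct piece. Throughout I use that $\ifr = \Ker f$ is idempotent and prime by Lemma~\ref{lem:kernel-idempotent} and Lemma~\ref{lem:lots-of-idempotents}, that $S$ is commutative (Proposition~\ref{prop:silver-lazard}(1)) with $\wgldim S \le 1$ (Lemma~\ref{lem:kernel-idempotent}(1)), so each $S_\n$ for $\n \in \Max S$ is a valuation domain by Lemma~\ref{lem:locally-VD}, and that I may freely pass between $R$ and $R/\ifr$ since the induced $\bar f\dd R/\ifr \to S$ is still a homological epimorphism (Lemma~\ref{lem:kernel-idempotent}(2)).

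For part (2), fix a maximal ideal $\n \subseteq S$ and set $\q = f\inv(\n)$. The composition $R \to S \to S_\n$ is a homological epimorphism by Lemma~\ref{lem:hom-epi-localization}(1) which, by the universal property of localization, factors uniquely through $R_\q$ as a homomorphism $h\dd R_\q \to S_\n$; flat base change (Lemma~\ref{lem:hom-epi-localization}(3)) makes $h$ itself a homological epimorphism, and it is clearly a local morphism of valuation domains. The main obstacle is to prove that $h$ is surjective. I intend to argue as follows: letting $C = \Coker h$, the identities $S_\n \Lotimes_{R_\q} S_\n \cong S_\n$ and $\Tor_1^{R_\q}(S_\n, S_\n) = 0$ propagate through the exact sequence $0 \to R_\q/\Ker h \to S_\n \to C \to 0$ to give $C \otimes_{R_\q} S_\n = 0$ and $\Tor_1^{R_\q}(C, S_\n) = 0$. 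The crux is then to show $C$ is torsion-free over $R_\q$: in any relation $rs = h(r')$ with $r \in R_\q$ a nonunit, locality of $h$ together with the valuation structure of $R_\q$ and $S_\n$ forces $r \mid r'$ in $R_\q$, so $s \in \Img h$. Torsion-free modules over a valuation domain are flat, so $C \hookrightarrow C \otimes_{R_\q} S_\n = 0$ gives $C = 0$. Setting $\jfr = \Ker h$, this ideal is idempotent by Lemma~\ref{lem:kernel-idempotent}, and the resulting canonical map $R \to R_\q/\jfr$ is equivalent to $R \to S_\n$. The remaining surjectivity of $S \to S_\n$ required by (2) will be obtained only after disjointness of intervals is in place.

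For parts (3) and (1): for (3), if the intervals $[\jfr, \q]$ and $[\jfr', \q']$ arising from two distinct maximal ideals $\n, \n'$ met in a common prime $\p$ of $R$ (which can only happen through overlap in the totally ordered $\Spec R$), then both $S_\n$ and $S_{\n'}$ would surject compatibly onto $R_\p/(\jfr + \jfr')$, and the ring-epimorphism property of $f$ would force these two compositions from $S$ to agree, implying $\n = \n'$, a contradiction. The Tor-vanishing $\Tor_n^R(R_\q/\jfr, R_{\q'}/\jfr') = 0$ then follows because after localizing at the larger of $\q, \q'$ one of the two modules becomes zero. With disjointness established, the separating elements in $R$ (lying in one lower endpoint but not in the relevant upper one) exhibit $S \to S_\n$ as a quotient map, completing (2). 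Finally, for part (1), I use that $\bigcap_\n \Ker(R \to S_\n) = \ifr$ (since the canonical map $S \to \prod_\n S_\n$ is injective for any commutative ring $S$, the annihilator of a nonzero element being contained in a maximal ideal) together with the totally ordered structure of the primes $\{\jfr_\n\}$ to pick the maximal ideal $\n$ realising the minimum value $\jfr = \ifr$; setting $\p = \q$ for this $\n$ then provides the surjection $g\dd S \to S_\n \cong R_\p/\ifr$ required by (1), and uniqueness of $\n$ given $g$ follows directly from the definition of equivalence of ring epimorphisms.
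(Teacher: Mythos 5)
Your overall plan is sound in places but has one genuine gap that cannot be patched by rearranging the order of the arguments, and several smaller slips. Let me go through them.

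Your argument for the key surjectivity in part (2) is a correct and interesting alternative to the paper's route. You show directly that $C=\Coker(h\dd R_\q \to S_\n)$ is torsion-free over $R_\q$ using the fact that $h$ is a local injective map of valuation domains, and then the vanishing $C\otimes_{R_\q}S_\n=0$ together with flatness of $C$ and injectivity of $h$ (modulo $\jfr$) forces $C=0$. The paper instead observes that $S_\n$ is a domain, hence torsion-free and flat over $R/\jfr$, and then applies Proposition~\ref{prop:flat-epi-VD} to the flat epimorphism $R/\jfr\to S_\n$ to obtain $S_\n\cong R_\q/\jfr$ directly. Both work; yours avoids passing through $R/\jfr$, while the paper's is shorter because it reuses Proposition~\ref{prop:flat-epi-VD}.

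The serious problem is your proof of part (1). You invoke the identity $\bigcap_\n\Ker(R\to S_\n)=\ifr$ and then claim to "pick the maximal ideal $\n$ realising the minimum value $\jfr=\ifr$". But a totally ordered family of idempotent prime ideals with intersection $\ifr$ need not contain $\ifr$ itself: it is perfectly consistent (in the absence of further information) that all $\jfr_\n$ strictly contain $\ifr$ and decrease to it without ever attaining it. In other words, the existence of a $\n$ with $\jfr_\n=\ifr$ is exactly the nontrivial content of part~(1), and nothing in your (2) or (3) rules out a non-attained infimum; notice that exactly this kind of phenomenon for $\I(f)$ is what Proposition~\ref{prop:no-gaps-hom-epi}(1) has to work hard to exclude, and that proposition uses (1). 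Your argument is therefore circular or at least has an unsupplied step. The paper avoids the issue entirely by a different device: it forms the $R$-torsion submodule $J=t(S)$, observes that $S/J$ is torsion-free hence flat, so $R\to S/J$ is a flat epimorphism, applies Proposition~\ref{prop:flat-epi-VD} to identify $S/J\cong R_\p$, and then takes $\n$ to be the unique maximal ideal of $S$ containing $J$; the injectivity of $R\to S_\n$ then follows because $\Ker(R\to S_\n)$ would produce an element of $t(S)\setminus\n$. This torsion-submodule step is essential, and you would need some version of it.

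In part (3) the assertion that "both $S_\n$ and $S_{\n'}$ would surject compatibly onto $R_\p/(\jfr+\jfr')$" is not correct: the canonical map $R_\q/\jfr\to R_\p/\jfr'$ involves a localization $R_\q\to R_\p$ that is not surjective unless $\p=\q$. The intended conclusion $\n=\n'$ can still be extracted from the epimorphism property (take preimages of the maximal ideal of the common target and use that $\n'$ becomes a prime contained in $\n$, hence equal to it by maximality), but that is a different argument from the one you wrote. Also, "localize at the larger of $\q,\q'$" does not kill either factor; one should instead observe that for every $\p\in\Spec R$, at least one of $(R_\q/\jfr)_\p$, $(R_{\q'}/\jfr')_\p$ vanishes because the intervals are disjoint. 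The paper proves the Tor-vanishing first (over $S$, using the surjectivity of $\psi_\n$ from (2) to see that $\Ker\psi_\n$ contains an element outside $\n'$) and deduces disjointness from it, which is cleaner.

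Finally, the sentence about "separating elements … exhibit $S\to S_\n$ as a quotient map" is too vague: with infinitely many maximal ideals there is no single element of $\jfr S$ doing the job, and the paper instead factors $\psi_\n$ through $S/\jfr S$ and applies part~(1) to $R/\jfr\to S/\jfr S$. That route is exactly why the paper proves (1) before (2), and your reordering makes the surjectivity of $\psi_\n$ depend on (1) which you have deferred and, as noted, not actually established.
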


\begin{proof}
Note that $S$ is a commutative ring by Proposition~\ref{prop:silver-lazard}(1) and that $\wgldim S\leq 1$ by Lemma~\ref{lem:kernel-idempotent}. In particular, every localization of $S$ at a prime ideal is a valuation domain by Lemma~\ref{lem:locally-VD}.
   
By Lemma~\ref{lem:kernel-idempotent}, $f$ induces a homological ring epimorphism $R/\ifr\to S$, where $R/\ifr$ is a valuation domain, since $\ifr$ is a prime ideal. Thus, without loss of generality we may assume that $\ifr=0$. 

(1) Viewing $S$ as an $R$-module via $f$, we shall consider its torsion submodule $t(S)=\{x\in S\mid \exists\ 0\neq r\in R \textrm{ \st} rx=0\}$. Clearly, $J=t(S)$ is an ideal of $S$ and $S/J$ is a torsion free, hence flat $R$-module. The composition $R \to S\to S/J$ is then a flat epimorphism and, by Proposition~\ref{prop:flat-epi-VD}, there is a prime ideal $\p$ of $R$ such $R \to S/J$ is equivalent to the canonical morphism $R \to R_\p$.

In particular, $S/J$ is a local ring and we shall consider the unique maximal ideal $\n$ of $S$ which contains $J$. Clearly $\n$ is mapped to $\p$ under the surjection $g\dd S \to R_\p$ and consequently $\p = f\inv(\n)$. Note also that the composition $h\dd R \overset{f}\to S \to S_\n$ is injective. Indeed, $\Ker h$ consists of all elements $r \in R$ whose annihilator in $S$ is not contained in $\n$, but the $R$-torsion part $J = t(S)$ is contained in $\n$, so $\Ker h = 0$. Therefore, $h\dd R \to S_\n$ is a flat epimorphism since $S_\n$ is a domain, and the combination of Proposition~\ref{prop:flat-epi-VD} with the equality $\p = f\inv(\n)$ tells us that $h$ and $R \to R_\p$ are equivalent.

(2) Let now $\n$ be an arbitrary maximal ideal of $S$ and let $\psi_{\n}\colon S\to S_{\n}$ be the localization map. Consider the homological ring epimorphism $\psi_{\n}f\dd R\to S_{\n}$ and let $\jfr$ be the kernel of $\psi_{\n}f$. Then $\jfr$ is an idempotent prime ideal of $R$, and $S_{\n}$ is a flat $R/\jfr$-module since $S_{\n}$ is a domain. By Proposition~\ref{prop:flat-epi-VD} there is a prime ideal $\q$ of $R$ containing $\jfr$ \st $S_{\n}\cong R_{\q}/\jfr$ and we necessarily have $\q = f\inv(\n)$.

Moreover, since $\jfr S$ vanishes under the localization $\psi_\n\dd S \to S_\n$ by the very definition of $\jfr$, $\psi_\n$ canonically factors as $S \to S/\jfr S \to (S/\jfr S)_{\n/\jfr S} \cong S_\n$. In particular, the $R/\jfr$-torsion part $t'(S/\jfr S)$ of $S/\jfr S$ is contained in $\n/\jfr S$ and, by part (1), the epimorphism $S/\jfr S \to (S/\jfr S)_{\n/\jfr S}$ is equivalent to $S/\jfr S \to (S/\jfr S)/t'(S/\jfr S)$. In particular, $S \to S_\n$ is surjective.

(3) Suppose that we have two distinct maximal ideals $\n,\n'$ in $S$ and the corresponding pairs $\jfr \subseteq \q$ and $\jfr' \subseteq \q'$ of ideals in $R$ as in (2). Assume without loss of generality that $\jfr \subseteq \jfr'$. Since the localization map $\psi_\n\dd S \to S_\n$ is surjective by (2) and in particular $\n$ is the unique maximal ideal containing $J = \Ker \psi_\n$, we have $\Tor_n^S(S/J, S_{\n'})=0$ for all $n \geq 0$. Indeed, there exists $x \in J \setminus \n'$ and the multiplication by $x$ must act on the Tor groups as zero and as an isomorphism at the same time. Thus,
\[ 0 = \Tor_n^S(S/J, S_{\n'}) = \Tor_n^S(S_\n, S_{\n'}) \cong \Tor_n^R(R_\q/\jfr, R_{\q'}/\jfr'), \]
since $f$ is a homological ring epimorphism (see Proposition~\ref{prop:char-hom-epi}(2)), and $S_\n \cong R_\q/\jfr$ and $S_{\n'} \cong R_{\q'}/\jfr'$.

It remains to prove that the intervals $[\jfr, \q]$ and $[\jfr', \q']$ of $(\Spec R,\subseteq)$ are disjoint. This is easy now since if $\jfr' \subseteq \q$, then we would have $R_\q/\jfr \otimes_R R_{\q'}/\jfr' \cong R_{\q \cap \q'}/\jfr' \ne 0$, a contradiction.
\end{proof}

\begin{rem} \label{rem:semilocal-VD}
From the latter proposition the structure of homological epimorphisms $f\dd R \to S$ with $S$ semilocal is rather clear. In such a case the finitely many maximal ideals $\n_i \subseteq S$ give us finitely many pairwise disjoint intervals $[\jfr_i,\p_i]$ in $(\Spec R, \subseteq)$ with all $\jfr_i$ idempotent. Proposition~\ref{prop:hom-epi-vd} also provides us with a homological epimorphism
\[ h\dd S \la\prod_i R_{\p_i}/\jfr_i. \]
Moreover, $h_{\n_i}$ is an isomorphism for every maximal ideal $\n_i \subseteq S$, so that $h$ itself is an isomorphism.
\end{rem}

The non-semilocal case is more difficult. We shall focus on the problem which collections of intervals can occur in the conclusion of Proposition~\ref{prop:hom-epi-vd}.

\begin{defn} \label{def:intervals}
Let $R$ be a valuation domain. An \emph{admissible interval} $[\ifr,\p]$ is an interval in $(\Spec R, \subseteq)$ \st $\ifr^2 = \ifr \subseteq \p$. The set of all admissible intervals will be denoted by $\Inter R$. We equip $\Inter R$ with a partial order: $[\ifr,\p] < [\ifr',\p']$ if $\p \subsetneqq \ifr'$ as ideals.

If $f\dd R \to S$ is a homological epimorphism, we denote by $\I(f)$ the collection of all admissible intervals $[\jfr,\q]$ which occur as in Proposition~\ref{prop:hom-epi-vd}(2).
\end{defn}

Thus, our task is to analyze the properties of $\I(f)$. First of all, it is easy to relate $\I(f)$ to the spectrum of $S$ as a poset.

\begin{lem} \label{lem:spectrum-epi}
Let $R$ be a valuation domain and $f\dd R \to S$ a homological epimorphism. Then the canonical map
\[ \Spec S \la \Spec R, \qquad \n \mapsto f\inv(\n) \]
restricts to a poset isomorphism between $(\Spec S,\subseteq)$ and the coproduct (=~disjoint union)
\[ \coprod_{[\jfr,\q] \in \I(f)} [\jfr,\q], \]
where $[\jfr,\q]$ are viewed as subchains of $(\Spec R,\subseteq)$.
\end{lem}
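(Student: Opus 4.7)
The plan is to decompose $\Spec S$ according to maximal ideals of $S$ and use Proposition~\ref{prop:hom-epi-vd} to identify each piece with the corresponding admissible interval. Throughout, write $\varphi\dd \Spec S \to \Spec R$ for the map $\n' \mapsto f\inv(\n')$.

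First, I show that every prime $\n' \in \Spec S$ is contained in a unique maximal ideal of $S$. Fix a maximal ideal $\n$ of $S$ and let $J_\n = \Ker \psi_\n$, where $\psi_\n\dd S \to S_\n$ is the localization map. By Proposition~\ref{prop:hom-epi-vd}(2), $\psi_\n$ is surjective, so $S/J_\n \cong S_\n$ is local with maximal ideal $\n/J_\n$. In particular $\n$ is the only maximal ideal of $S$ containing $J_\n$. On the other hand, $J_\n = \{ s \in S \mid \exists\, t \notin \n,\ ts = 0 \}$, so any prime $\n' \subseteq \n$ automatically contains $J_\n$. Hence the maximal ideal $\n$ above a given prime $\n'$ is uniquely determined as the one with $J_\n \subseteq \n'$.

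Second, this decomposes $\Spec S$ as the disjoint union of the fibers $F_\n = \{ \n' \in \Spec S \mid \n' \subseteq \n \}$ over maximal ideals $\n$ of $S$. For each $\n$ with associated admissible interval $[\jfr,\q] \in \I(f)$, surjectivity of $\psi_\n$ gives a bijection $F_\n \leftrightarrow \Spec S_\n$, while Proposition~\ref{prop:hom-epi-vd}(2) yields an iso $S_\n \cong R_\q/\jfr$ identifying $\Spec S_\n$ with $[\jfr,\q] \subseteq \Spec R$. A direct trace through the definitions shows that the composite bijection $F_\n \to [\jfr,\q]$ is the restriction of $\varphi$. Disjointness of the intervals from Proposition~\ref{prop:hom-epi-vd}(3) then guarantees that the total map $\varphi$ is a bijection onto the coproduct $\coprod_{[\jfr,\q] \in \I(f)} [\jfr,\q]$.

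Third, for the order structure: $\varphi$ is clearly order-preserving, and its restriction to each fiber $F_\n$ is an order isomorphism onto $[\jfr,\q]$, since both the quotient $S \to S/J_\n$ and the iso $S/J_\n \cong R_\q/\jfr$ induce order isomorphisms on spectra. It remains to see that two primes $\n'_1 \subseteq \n'_2$ in $\Spec S$ must lie in a common fiber: if $\n'_2 \subseteq \n$ then $\n'_1 \subseteq \n$ as well, so by the uniqueness established in step one, $\n'_1$ and $\n'_2$ both lie in $F_\n$. Thus comparable primes of $S$ are always mapped into the same component, which matches the coproduct order on the right-hand side and completes the proof.

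The main technical point is the uniqueness of the maximal ideal above a given prime of $S$, and the only nontrivial input to it is the surjectivity of $\psi_\n$ furnished by Proposition~\ref{prop:hom-epi-vd}(2); after that, the rest of the argument is bookkeeping over the already-established local picture.
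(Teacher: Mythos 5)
Your proof is correct and takes the same approach as the paper's (very brief) one: decompose $\Spec S$ over $\Max S$, identify each fiber with $\Spec S_\n$ and then with $[\jfr,\q]$ via Proposition~\ref{prop:hom-epi-vd}(2), and invoke disjointness from Proposition~\ref{prop:hom-epi-vd}(3). You fill in the details the paper elides, most notably the uniqueness of the maximal ideal over a given prime of~$S$ (a minor note: the bijection $F_\n \leftrightarrow \Spec S_\n$ holds for any localization regardless of whether $\psi_\n$ is surjective; surjectivity is genuinely needed only for the uniqueness step).
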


\begin{proof}
By Definition~\ref{def:intervals}, there is a bijection between maximal ideals of $S$ and elements of $\I(f)$. The rest follows from the fact that the primes below a maximal ideal $\n \subseteq S$ correspond to the primes in the valuation domain~$S_\n$.
\end{proof}

Having this description at hand, the coming proposition encodes a crucial necessary condition on possible infinite collections of intervals coming from homological epimorphisms.

\begin{prop} \label{prop:no-gaps-hom-epi}
Let $R$ be a valuation domain and $f\dd R \to S$ be a homological epimorphism.

\begin{enumerate}
\item If $\clS = \{ [\jfr_\ell,\q_\ell] \mid \ell \in \Lambda \} \subseteq \I(f)$ is a non-empty subset with no minimal element, then $\I(f)$ contains an element of the form $[\jfr,\bigcap_{\ell\in\Lambda} \q_\ell]$.

\item If $\clS = \{ [\jfr_\ell,\q_\ell] \mid \ell \in \Lambda \} \subseteq \I(f)$ is a non-empty subset with no maximal element, then $\I(f)$ contains an element of the form $[\bigcup_{\ell\in\Lambda} \jfr_\ell,\q]$.
\end{enumerate}
\end{prop}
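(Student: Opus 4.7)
The plan is to reduce both parts to showing that a specific prime of $R$---namely $\p := \bigcap_{\ell \in \Lambda} \q_\ell$ in~(1) or $\jfr := \bigcup_{\ell \in \Lambda} \jfr_\ell$ in~(2)---lies in the image of the contraction map $\Spec S \to \Spec R$. Once this is established, Lemma~\ref{lem:spectrum-epi} locates it in some interval $[\jfr',\q'] \in \I(f)$, and a short case analysis based on pairwise disjointness of the intervals (and the fact that $\Spec R$ is a chain) will force $\q' = \p$ in~(1) and $\jfr' = \jfr$ in~(2).

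For part~(1), the no-minimum hypothesis gives, for every $\ell \in \Lambda$, some $\ell' \in \Lambda$ with $\q_{\ell'} \subsetneq \jfr_\ell$; hence $\p \subsetneq \jfr_\ell \subsetneq \q_\ell$ and $\p$ sits in no interval of $\clS$. To show $\p$ is in the image of $\Spec S \to \Spec R$---equivalently, that $\p S_\p \ne S_\p$, where $S_\p := S \otimes_R R_\p$---I argue by contradiction. If $\p S_\p = S_\p$, then clearing denominators in a relation $1 = \sum_i p_i t_i$ and using that the kernel of $S \to S_\p$ consists of $(R \setminus \p)$-torsion elements yields an equation $f(\rho) = \sum_i f(p_i)\sigma_i$ in $S$ with $\rho \in R \setminus \p$, $p_i \in \p$, $\sigma_i \in S$. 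Since $\rho \notin \bigcap \q_\ell$, pick $\ell_0 \in \Lambda$ with $\rho \notin \q_{\ell_0}$; then $f(\rho) \notin \n_{\ell_0}$ (where $\n_{\ell_0}$ is the maximal ideal of $S$ with $f\inv(\n_{\ell_0}) = \q_{\ell_0}$), while $\p \subseteq \q_{\ell_0}$ forces each $f(p_i) \in \n_{\ell_0}$ and therefore the right-hand side lies in $\n_{\ell_0}$---a contradiction.

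For part~(2), the no-maximum hypothesis makes $\{\jfr_\ell\}$ a chain without maximum, so $\jfr$ is idempotent by Lemma~\ref{lem:lots-of-idempotents}. Assume $\jfr S_\jfr = S_\jfr$ for $S_\jfr := S \otimes_R R_\jfr$; the same clearing-denominators step produces $f(\rho) = \sum_i f(p_i)\sigma_i$ in $S$ with $\rho \notin \jfr$ and $p_i \in \jfr$. Because $\{\jfr_\ell\}$ is a chain and the sum is finite, all the $p_i$ lie in a common $\jfr_{\ell_0}$ with $\ell_0 \in \Lambda$. By Lemma~\ref{lem:spectrum-epi} there is a prime $\n^*_{\ell_0} \in \Spec S$ with $f\inv(\n^*_{\ell_0}) = \jfr_{\ell_0}$ (the minimal prime in $S_{\n_{\ell_0}} \cong R_{\q_{\ell_0}}/\jfr_{\ell_0}$), and $\sum f(p_i)\sigma_i \in f(\jfr_{\ell_0})S \subseteq \n^*_{\ell_0}$ forces $f(\rho) \in \n^*_{\ell_0}$, so $\rho \in \jfr_{\ell_0} \subseteq \jfr$---contradicting $\rho \notin \jfr$.

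To close~(1), suppose $\p \in [\jfr',\q'] \in \I(f)$ with $\q' \supsetneq \p$. Disjointness of $[\jfr',\q']$ and each $[\jfr_\ell,\q_\ell]$ leaves two options: $\q_\ell \subsetneq \jfr'$ (ruled out, since $\jfr' \subseteq \p \subseteq \q_\ell$), or $\q' \subsetneq \jfr_\ell$; the remaining option, holding for all $\ell$, gives $\q' \subseteq \bigcap \jfr_\ell \subseteq \p$, contradicting $\q' \supsetneq \p$. A symmetric argument rules out $\jfr' \subsetneq \jfr$ in~(2). The main obstacle I anticipate is the clearing-denominators step---one has to unpack carefully how an equality in $S_\p$ (respectively $S_\jfr$) lifts to $S$ after multiplying by a single element of $R \setminus \p$ (respectively $R \setminus \jfr$); the rest of the proof is pure interval bookkeeping.
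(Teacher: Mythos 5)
Your proof is correct, and it takes a genuinely different route from the paper's, especially in part~(1). The paper proves~(1) by passing to the factor ring $S/\ifr S$ for $\ifr = \inf\{\jfr_\ell\}$ in the idempotent spectrum, invoking Proposition~\ref{prop:hom-epi-vd}(1) for the composite $R\to S/\ifr S$ to produce an interval $[\jfr,\q]$ with $\ifr\subseteq\q\subseteq\p$, and then proving $\q=\p$ by a contradiction: picking $x\in\p\setminus\q$, analysing $S\to S[1/f(x)]$, extracting an idempotent $e\in S$, and observing that $\Ker(R\to S/eS)$ would be an idempotent ideal strictly between $\q$ and $\p$, impossible since $\iSpec R$ has no element in $(\ifr,\p]$. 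You bypass all of this: you show directly that the fibre $S\otimes_R k(\p)=S_\p/\p S_\p$ is nonzero by a clearing-of-denominators argument (the kernel of $S\to S_\p$ is exactly the $(R\setminus\p)$-torsion, and a hypothetical relation $1\in\p S_\p$ would lift to $f(\rho)=\sum f(p_i)\sigma_i$ in $S$ with $\rho\notin\p$, contradicted by reducing modulo a well-chosen maximal ideal $\n_{\ell_0}$), then use Lemma~\ref{lem:spectrum-epi} and the pairwise disjointness from Proposition~\ref{prop:hom-epi-vd}(3) to pin down $\q'=\p$. For part~(2) the two arguments are closer in spirit---both show the fibre over $\bigcup\jfr_\ell$ is nonempty---but the paper uses a direct-limit argument (expressing $S\otimes_R k(\p)$ as a colimit of the nonzero $S\otimes_R R_\p/\jfr_\ell$ with surjective transition maps) whereas you reuse the clearing-of-denominators trick. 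What your approach buys is uniformity (the same reduction works in both parts) and elementariness: it avoids the idempotent-spectrum machinery and the idempotent-extraction computation of the paper, relying only on the epimorphism structure of $f$ (concretely, on $\Spec S\to\Spec R$ being injective with image described by Lemma~\ref{lem:spectrum-epi}) and on the chain structure of $\Spec R$. One tiny imprecision: after citing the strict inequalities you write ``$\p\subsetneq\jfr_\ell\subsetneq\q_\ell$,'' but $\jfr_\ell=\q_\ell$ is allowed; the only inequality you actually use, $\p\subsetneq\jfr_\ell$, is correct.
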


\begin{proof}
(1) Denote $\p = \bigcap_{\ell \in \Lambda} \q_\ell = \bigcap_{\ell \in \Lambda} \jfr_\ell$ and, appealing to Lemma~\ref{lem:lots-of-idempotents}, let $\ifr = \inf\{ \jfr_\ell \mid \ell \in \Lambda \}$ be the infimum taken in $(\iSpec R,\subseteq)$.
One directly checks that $\p$ is a prime ideal. Although it may happen that $\ifr \subsetneqq \p$ (see Example~\ref{expl:non-coherent} below), we at least know that any idempotent prime ideal contained in $\p$ has to be contained in $\ifr$. That is, there is no idempotent ideal $\p'$ \st $\ifr \subsetneqq \p' \subseteq \p$. We finish the proof of (1) in two steps.

Step (a): We claim that $\I(f)$ must contain an element of the form $[\jfr,\q]$ \st $\jfr \subseteq \ifr \subseteq \q \subseteq \p$.

To see that, let us denote for each $[\jfr_\ell,\q_\ell]$ the corresponding maximal ideal of $S$ by  $\n_\ell$. Then $\jfr_\ell = \Ker (R \to S_{\n_\ell})$ contains $\ifr$ for each $\ell \in \Lambda$. In particular, each $\psi_{\n_\ell}\dd S \to S_{\n_\ell}$ canonically factors through the projection $p\dd S \to S/\ifr S$ and we infer that the kernel of the composition $pf\dd R \to S/\ifr S$ is contained in $\p = \bigcap_{\ell \in \Lambda} \jfr_\ell$. Since $pf$ is a homological epimorphism and $\Ker pf$ is idempotent by Lemma~\ref{lem:kernel-idempotent}, we must have $\Ker pf \subseteq \ifr$ and then clearly $\Ker pf = \ifr$. Let $\m \supseteq \ifr S$ be the unique maximal ideal of $S$ \st $\m/\ifr S$ fits Proposition~\ref{prop:hom-epi-vd}(1) when applied to $pf\dd R \to S/\ifr S$. If $[\jfr,\q] \in \I(f)$ is the interval corresponding to $\m$, then $\jfr = \Ker (R \to S_\m) \subseteq \ifr$ and $\ifr \subseteq f\inv(\m) = \q$ by construction. This proves the claim and reduces our task to showing that $\q = \p$.

Step (b): We claim that $\q = \p$.

To see that, suppose by way of contradiction that $\q \subsetneqq \p$ and consider an element $x \in \p \setminus \q$. This means that $x^n \ne \q$ for each $n \ge 1$ and, since $R$ is a valuation domain, we have $\q \subsetneqq x^nR \subseteq \p $.

We shall denote $y = f(x)$ and by $u\dd S \to S[1/y]$ the corresponding localization. Suppose that $\n \subseteq S$ is a maximal ideal and $[\jfr',\q'] \in \I(f)$ the corresponding interval in $\Spec R$. Then $S[1/y]_\n \cong (R_{\q'}/\jfr')[1/x]$ canonically. Hence $S_\n \to S[1/y]_\n$ is either a zero map or an isomorphism depending on whether $x \in \jfr'$ or not (in such a case $x \notin \q'$ by the choice of $x$).
In particular $S \to S[1/y]$ is surjective since localizations at all maximal ideals jointly reflect exactness.

Thus, $\Spec S[1/y]$ can be identified with a quasi-compact clopen set $V \subseteq \Spec S$ whose complement $\Spec R \setminus V$ is also quasi-compact. As in the noetherian case, we find an idempotent $e \in S$ such that $S \to S[1/y]$ is equivalent to $S \to S[1/e] \cong S/(1-e)S$ as a ring epimorphism. Indeed, there exists $t \in S$ \st $1/y = t$ in $S[1/y]$, which forces the existence of $n \ge 0$ \st $y^n = t y^{n+1}$ in $S$. In particular $y^n = t^n y^{2n}$ and $e = t^n y^n$ is the idempotent which we are looking for.

Thus we have a homological epimorphism $g\dd R \to S/eS$ and $\Ker g = f\inv(eS)$ is an idempotent ideal in $R$ by Lemma~\ref{lem:kernel-idempotent}. As $y^n = e y^n \in eS$, we have $x^n \in \Ker g$. In particular $\q \subsetneqq \Ker g$. On the other hand $\Ker g \subseteq \p$ since $e = t^n y^n$ vanishes under $R \to S_{\n_\ell} \cong R_{\q_\ell}/\jfr_\ell$ for each $\ell \in \Lambda$. To summarize, we have constructed an idempotent ideal $\Ker g \subseteq R$ \st $\q \subsetneqq \Ker g \subseteq \p$, in contradiction to our assumption that there are no idempotent ideals in that interval.

Hence $\q = \p$ and the proof of step (b) as well as statement (1) of the proposition are finished.

(2) Let $\p = \bigcup_{\ell \in \Lambda} \jfr_\ell$. It suffices to prove that $S \otimes_R k(\p) \ne 0$ for the residue field $k(\p) = R_\p/\p$, for then $S_\n \otimes k(\p) \ne 0$ for some maximal ideal $\n$ of $S$ and thus $R_\q/\jfr \otimes k(\p) \ne 0$ for some $[\jfr,\q] \in \I(f)$. Since the intervals in $\I(f)$ are disjoint this implies that $\p = \jfr$.

Now note that $S \otimes_R R_\p/\jfr_\ell \ne 0$ for each $\ell \in \Lambda$ since we can always find $\ell' \in \Lambda$ and $\n_{\ell'} \in \Spec S$ \st $[\jfr_\ell,\q_\ell] < [\jfr_{\ell'},\q_{\ell'}]$, $\q_{\ell'} = f\inv(\n_{\ell'})$ and
\[ S_{\n_{\ell'}} \otimes_R R_\p/\jfr_\ell \cong R_{\q_{\ell'}}/\jfr_{\ell'} \otimes_R R_\p/\jfr_\ell \cong R_{\q_{\ell'}}/\jfr_{\ell'} \ne 0. \]
Observe further that $S \otimes_R k(\p)$ can be expressed as a direct limit of $(S \otimes_R R_\p/\jfr_\ell \mid \ell \in \Lambda)$, where the maps in the direct system are surjective. If $1\otimes_R 1 \in S \otimes_R k(\p)$ were zero, standard properties of direct limits would imply that also $0 = 1\otimes_R 1 \in S \otimes_R R_\p/\jfr_\ell$ for some $\ell \in \Lambda$, a contradiction.
\end{proof}

\subsection{Mazet presentations and abundance of idempotents}
\label{subsec:lots-of-idemp}

So far we have mostly used the homological properties of $f\dd R \to S$. Now we are going to employ the fact that $f$ is a ring epimorphism. The following concept will facilitate our discussion.

\begin{defn} \label{def:dominion}
Let $R,S$ be arbitrary (non-commutative) rings and $f\dd R \to S$ be a ring homomorphism. The \emph{dominion} of $f$ is the collection of all elements $s \in S$ \st for any pair $g_1,g_2\dd S \to T$ of ring homomorphisms with $g_1f= g_2f$ we have also $g_1(s) = g_2(s)$.
\end{defn}

In connection to homotopy theory, dominions of ring homomorphisms $\bbZ \to S$ were also studied in~\cite{BK72,BK72-corr}.
The following zig-zag criterion for the elements in the dominion was originally studied by Mazet~\cite{Maz68}. It was stated in the present form by Isbell~\cite{Isb69}, who combined Mazet's results with those in~\cite{Sil67}.

\begin{prop}[Isbell-Mazet-Silver] \label{prop:dominion}
Let $f\dd R \to S$ be a ring homomorphism and $s \in S$. Then the following are equivalent:
\begin{enumerate}
\item $s$ is in the dominion of $f$;
\item There exist natural numbers $m,n \ge 1$ and matrices $X \in M_{1\times m}(S)$, $Y \in M_{m\times n}(R)$ and $Z \in M_{n\times 1}(S)$ \st $s = X \cdot f(Y) \cdot Z$ (as $1\times 1$-matrices) and $X \cdot f(Y), f(Y) \cdot Z$ are (row and column, respectively) matrices over $\Img f$.
\end{enumerate}
\end{prop}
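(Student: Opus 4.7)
The plan is to establish the equivalence by passing through the intermediate characterization that $s$ lies in the dominion of $f$ if and only if $s\otimes 1=1\otimes s$ in the $S$-bimodule $S\otimes_R S$ (where $S$ is a right, resp.~left, $R$-module via $f$), and then to extract the matrix representation (2) from this tensor identity.

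The direction (2)$\Rightarrow$(1) is a direct computation. Given any two ring homomorphisms $g_1,g_2\dd S\to T$ with $g_1f=g_2f$, and writing $s=X\cdot f(Y)\cdot Z$ as in (2), one uses that ring homomorphisms respect matrix multiplication together with the hypothesis that $X\cdot f(Y)$ has entries in $\Img f$, to get
\[
g_1(s)=g_1(X)\cdot g_1f(Y)\cdot g_1(Z)=g_1\bigl(X\cdot f(Y)\bigr)\cdot g_1(Z)=g_2\bigl(X\cdot f(Y)\bigr)\cdot g_1(Z).
\]
Expanding $g_2(X\cdot f(Y))$ and regrouping so as to apply the same swap to $f(Y)\cdot Z$, which also lies in $\Img f$ by hypothesis, one arrives at $g_1(s)=g_2(s)$, so $s$ is in the dominion.

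For (1)$\Rightarrow$(2), I would first prove that (1) is equivalent to $s\otimes 1=1\otimes s$ in $S\otimes_R S$. The ``only if'' part uses the test ring
\[
T=\left\{\begin{pmatrix}a&m\\0&b\end{pmatrix}\;\middle|\;a,b\in S,\; m\in S\otimes_R S\right\},
\]
with multiplication induced by the $S$-bimodule structure on $S\otimes_R S$. One checks that
\[
g_1(x)=\begin{pmatrix}x&0\\0&x\end{pmatrix},\qquad g_2(x)=\begin{pmatrix}x & x\otimes 1-1\otimes x\\0&x\end{pmatrix}
\]
are ring homomorphisms satisfying $g_1f=g_2f$ (the off-diagonal entry of $g_2(f(r))$ vanishes by the $R$-balance $f(r)\otimes 1=1\otimes f(r)$ in $S\otimes_R S$); the dominion hypothesis then gives $s\otimes 1=1\otimes s$. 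Conversely, for any coequalising pair $g_1,g_2\dd S\to T$, the $R$-balanced $\bbZ$-bilinear map $\mu\dd S\otimes_R S\to T$, $\mu(x\otimes y)=g_1(x)g_2(y)$, is well-defined and sends $s\otimes 1\mapsto g_1(s)$, $1\otimes s\mapsto g_2(s)$, so the tensor identity forces $g_1(s)=g_2(s)$.

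The final step is extracting the zig-zag from $s\otimes 1=1\otimes s$. From the standard presentation
\[
S\otimes_\bbZ R\otimes_\bbZ S \;\la\; S\otimes_\bbZ S \;\la\; S\otimes_R S \;\la\; 0
\]
(with first map $x\otimes r\otimes y\mapsto xf(r)\otimes y-x\otimes f(r)y$), the identity lifts to an equation in $S\otimes_\bbZ S$ of the form $s\otimes 1-1\otimes s=\sum_{k}\bigl(x_k f(r_k)\otimes y_k-x_k\otimes f(r_k)y_k\bigr)$. I would reindex the distinct $x_k$ by a row index $i=1,\dots,m$ and the distinct $y_k$ by a column index $j=1,\dots,n$, collect the elements $r_k\in R$ into an $m\times n$ matrix $Y=(r_{ij})$ (filling with zeros where needed), and set $X=(x_1,\dots,x_m)\in M_{1\times m}(S)$, $Z=(y_1,\dots,y_n)^T\in M_{n\times 1}(S)$. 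The $\bbZ$-bilinearity of the equation then precisely says that each column sum $\sum_i x_i f(r_{ij})$ equals some $f(s_j)\in\Img f$ (so that the terms pair with $y_j$ to contribute to $1\otimes s$) and each row sum $\sum_j f(r_{ij})y_j$ equals some $f(t_i)\in\Img f$ (pairing with $x_i$ to contribute to $s\otimes 1$), giving $s=X\cdot f(Y)\cdot Z$ with $X\cdot f(Y)$ and $f(Y)\cdot Z$ in $\Img f$.

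The main obstacle is precisely this last combinatorial step: the explicit reindexing of the data supplied by the presentation of $S\otimes_R S$ must be done so that both the row-sum and column-sum conditions hold simultaneously for a single matrix $Y$. As a sanity check and guide for the bookkeeping, the easier forward computation (that a zig-zag directly implies $s\otimes 1=\sum_{i,j} x_i f(r_{ij})\otimes y_j=\sum_j f(s_j)\otimes y_j=1\otimes s$ via absorbing $f(s_j)$ across $\otimes_R$) runs essentially in reverse to the extraction, so the plan is to formalise this inverse passage.
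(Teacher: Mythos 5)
Your direction (2)\,$\Rightarrow$\,(1) is correct, and your reduction of (1) to the tensor identity $s\otimes 1=1\otimes s$ in $S\otimes_R S$ via the upper--triangular test ring and the pairing $\mu(x\otimes y)=g_1(x)g_2(y)$ is the standard argument and matches the paper's route (the paper reaches the same identity by citing Silver's lemma that dominion elements satisfy $sm=ms$ in every $S$-bimodule, applied to $m=1\otimes 1$).

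The genuine gap is exactly where you flag it: the passage from $s\otimes 1=1\otimes s$ to a Mazet presentation. Your proposed mechanism --- lift the identity to $S\otimes_{\mathbb{Z}}S$, write $s\otimes 1-1\otimes s=\sum_k\bigl(x_kf(r_k)\otimes y_k-x_k\otimes f(r_k)y_k\bigr)$, reindex the $x_k$ and $y_k$ into a rectangular grid padded with zeros, and read off row and column sums --- does not work. First, $S\otimes_{\mathbb{Z}}S$ has no canonical basis, so terms in such an equation cannot be ``matched up'': $\mathbb{Z}$-bilinearity alone does not entail that the partial sums $\sum_i x_if(r_{ij})$ or $\sum_j f(r_{ij})y_j$ land in $\operatorname{Im}f$. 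Second, the zero-padding typically destroys the identity $s=X\cdot f(Y)\cdot Z$. A concrete symptom: if one instead lifts along a free $R$-module presentation of $S$ (so that kernel terms really are $R$-linear relations), the naive read-off produces a triple with one of $X\cdot f(Y)$ or $f(Y)\cdot Z$ containing $s$ itself as an entry (hence not over $\operatorname{Im}f$) and with $X\cdot f(Y)\cdot Z$ evaluating to $0$, not to $s$ --- try it for $R=\mathbb{Z}\to S=\mathbb{Z}[1/2]$, $s=1/2$, where the correct zigzag $X=(1/2)$, $Y=(2)$, $Z=(1/2)$ is \emph{not} what the presentation directly hands you. The extraction of the matrices genuinely requires the delicate bookkeeping of Mazet's argument (or \cite[\S1.4]{GdlP87}), which the paper itself does not reproduce but only cites. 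So your proof is correct and standard up to and including the reduction to the tensor identity, but the final combinatorial step is not a routine reindexing and remains to be supplied.
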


\begin{proof}
The implication $(2) \Rightarrow (1)$ is straightforward. The argument for the converse is sketched in~\cite[Theorem 1.1]{Isb69}. If $s \in S$ belongs to the dominion, the proof of~\cite[Proposition 1.1]{Sil67} shows that $sm = ms$ for any $S$-$S$-bimodule $M$ and any element $m \in M$. Applying this to $M = S \otimes_R S$ and $m = 1 \otimes_R 1$ implies that $s \otimes_R 1 = 1 \otimes_R s$. As explicitly computed in~\cite{Maz68} or~\cite[\S1.4]{GdlP87}, the latter has as a consequence the existence of matrices $X,Y,Z$ as in~(2).
\end{proof}

\begin{cor} \label{cor:dominion}
Given $f\dd R \to S$, the dominion is the largest subring $S'\subseteq S$ \st $f\dd R \to S'$ is a ring epimorphism.
\end{cor}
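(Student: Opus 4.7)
The corollary asserts three things about the dominion $D := \operatorname{Dom}(f)$: (i) it is a subring of $S$ containing $f(R)$; (ii) the corestriction $f\dd R \to D$ is itself a ring epimorphism; (iii) $D$ is maximal among subrings of $S$ with property~(ii). I would dispatch (i) and (iii) directly from Definition~\ref{def:dominion}, and handle (ii) using the Isbell--Mazet--Silver criterion from Proposition~\ref{prop:dominion}.

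For (i), if $s, s' \in D$ and $g_1, g_2\dd S \to T$ are two ring homomorphisms with $g_1 f = g_2 f$, the fact that each $g_i$ is a ring homomorphism immediately forces $g_1(s\pm s') = g_2(s\pm s')$, $g_1(ss') = g_2(ss')$, and $g_1(1) = g_2(1)$, so $D$ is closed under ring operations. The inclusion $f(R) \subseteq D$ is immediate from $g_1 f = g_2 f$. For (iii), given a subring $S'$ with $f(R) \subseteq S' \subseteq S$ such that $f\dd R \to S'$ is an epimorphism, any $s \in S'$ belongs to $D$ because for any test pair $g_1, g_2\dd S \to T$ with $g_1 f = g_2 f$ the restrictions $g_1|_{S'}, g_2|_{S'}\dd S' \to T$ agree on $f(R)$ and hence, by the epimorphism property of $f\dd R \to S'$, coincide on all of $S'$; in particular $g_1(s) = g_2(s)$.

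The substantive content is (ii). I would use that, by the Silver characterization (the part $\mathrm{(1)} \Leftrightarrow \mathrm{(2)}$ of Proposition~\ref{prop:dominion} interpreted via the bimodule calculation in its proof), $f\dd R \to D$ is a ring epimorphism if and only if the multiplication $\mu\dd D \otimes_R D \to D$ is an isomorphism, equivalently $1 \otimes_R d = d \otimes_R 1$ in $D \otimes_R D$ for every $d \in D$. Equivalently still, by the implication $\mathrm{(2)} \Rightarrow \mathrm{(1)}$ of Proposition~\ref{prop:dominion} \emph{applied to the map $f\dd R \to D$}, it suffices to exhibit, for each $d \in D$, a zig-zag presentation $d = X' f(Y') Z'$ with $X', Z'$ having entries in $D$ (and $X'f(Y'), f(Y')Z'$ in $M(f(R))$). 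Starting from a zig-zag $d = Xf(Y)Z$ over $S$ furnished by Proposition~\ref{prop:dominion} applied to $f\dd R \to S$, the task is to refine it to lie over $D$.

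The main anticipated obstacle is precisely this refinement: a priori the entries of $X$ and $Z$ are only in $S$, and the bimodule calculation that proves $1 \otimes d = d \otimes 1$ in $S \otimes_R S$ passes through intermediate tensors $\sum_i x_i \otimes b_i$ which do not obviously lie in $D \otimes_R D$. To resolve this I would exploit the two-sided nature of the IMS conditions: every entry of $Xf(Y)$ and of $f(Y)Z$ lies in $f(R) \subseteq D$, and the bimodule moves $x \cdot f(r) \otimes y = x \otimes f(r) \cdot y$ that drive the zig-zag involve only elements of $f(R)$. A careful element-level reorganization of the matrix sums — absorbing the factors $x_i, z_j \in S$ into the products $x_i b_i$ and $A_j z_j$ that do lie in $D$ by hypothesis — should allow one to realize the chain of equalities inside $D \otimes_R D$, completing the proof that $\mu$ is an isomorphism and hence that $f\dd R \to D$ is a ring epimorphism.
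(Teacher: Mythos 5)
Parts (i) and (iii) of your argument---that the dominion $D$ is a subring of $S$ containing $f(R)$, and that any subring $S'\subseteq S$ with $R \to S'$ a ring epimorphism is contained in $D$---are correct and follow directly from Definition~\ref{def:dominion} exactly as you describe.

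The substance of the corollary is your part (ii), the claim that the corestriction $f\dd R\to D$ is itself an epimorphism, and here your proposal has a genuine gap, one you yourself flag but do not close. From an IMS zig-zag $d = Xf(Y)Z$ with $P := Xf(Y)$ and $Q := f(Y)Z$ over $f(R)$, the standard chain is $d\otimes 1 = \sum_i X_i\otimes Q_i = \sum_{i,j} X_iY_{ij}\otimes Z_j = \sum_j P_j\otimes Z_j = 1\otimes d$; the first and third sums live in $S\otimes_R D$ and $D\otimes_R S$ respectively, and the middle equality is only valid in $S\otimes_R S$. At no stage does the calculation take place in $D\otimes_R D$, because the individual entries $X_i, Z_j$ lie only in $S$, and the fact that certain $f(R)$-combinations $\sum_j Y_{ij}Z_j = Q_i$ and $\sum_i X_iY_{ij} = P_j$ happen to land in $f(R)\subseteq D$ does not let you decompose $d\otimes 1$ or $1\otimes d$ inside $D\otimes_R D$ along those relations (the summands $X_iQ_i$, $P_jZ_j$ need not lie in $D$). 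The ``careful element-level reorganization'' you invoke would have to produce a zig-zag for $d$ with $X', Z'$ over $D$, and this does not follow formally from the zig-zag over $S$; note also that neither $D\otimes_R D\to S\otimes_R D$ nor $S\otimes_R D\to S\otimes_R S$ is injective in general, so you cannot pass vanishing back. The assertion that $R\to D$ is epic is a nontrivial classical fact of the Isbell--Mazet--Silver theory (the ``dominion'' gives the epi part of an (epi, regular mono) factorization), which the paper takes for granted in stating the corollary without proof; to complete your argument you would either have to reproduce that argument or cite it, rather than derive it by rearranging the zig-zag over $S$.
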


This leads to the following definition.

\begin{defn} \label{def:mazets-pres}
Let $f\dd R \to S$ be a ring homomorphism and $s \in S$ be in the dominion. Then a triple $(P,Y,Q) \in M_{1\times n}(R) \times M_{m\times n}(R) \times M_{m\times 1}(R)$ is called a \emph{Mazet presentation} of $s$ over~$R$ if there exist matrices $X \in M_{1\times m}(S)$ and $Z \in M_{n\times 1}(S)$
 \st
\[
s = X \cdot f(Y) \cdot Z, \qquad f(P) = X \cdot f(Y), \quad \textrm{and} \quad f(Q) =  f(Y) \cdot Z.
\]
\end{defn}

Note that the image of $s$ under any ring homomorphism $g\dd S \to T$ (including $g = id_S$) is fully determined by $(P,Y,Q)$. In fact, only $P$ and $Q$ suffice, but it will be more convenient for us to work with $Y$ as well. For valuation domains, the situation simplifies as follows.

\begin{lem} \label{lem:mazet-VD}
Let $R$ be a valuation domain and $f\dd R \to S$ be a ring epimorphism. Then every $s \in S$ has a Mazet presentation $(P,Y,Q)$ \st $Y$ is a diagonal square matrix.
\end{lem}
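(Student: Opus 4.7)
(Plan.) The strategy is to start from an arbitrary Mazet presentation of $s$, square up the middle matrix by padding with zeros, and then diagonalize it via left and right multiplication by invertible matrices over~$R$.

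First, I would apply Proposition~\ref{prop:dominion} to produce a presentation with matrices $X \in M_{1\times m}(S)$, $Y \in M_{m\times n}(R)$, $Z \in M_{n\times 1}(S)$ satisfying $s = X \cdot f(Y) \cdot Z$, together with $f(P) = X \cdot f(Y)$ and $f(Q) = f(Y) \cdot Z$. If $m \ne n$, I would pad $Y$ with zero rows (in case $m < n$) or zero columns (in case $m > n$) to a square $k \times k$ matrix for $k = \max(m,n)$, and extend $X$, $Z$, $P$, $Q$ by matching zero entries. A direct check shows that all three identities persist, so from this point one may assume $Y \in M_{k\times k}(R)$.

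The key step is to diagonalize $Y$. Since the ideals of a valuation domain are totally ordered by inclusion, among the nonzero entries of $Y$ there is one whose principal ideal contains all the others, that is, an entry $y_{ij}$ which divides every entry of $Y$. Permuting rows and columns to move $y_{ij}$ to position $(1,1)$ and subtracting appropriate $R$-multiples of the first row from the remaining rows (and similarly for columns) clears the rest of the first row and column; crucially, the new entries in the inner $(k-1) \times (k-1)$ block remain divisible by $y_{11}$, so the process may be iterated on this block. The result is a factorization $UYV = D$ with $U, V \in GL_k(R)$ and $D$ diagonal---a classical Smith-type reduction available over any valuation (or more generally, Bezout) domain.

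Finally, I would substitute $(Xf(U)^{-1},\, D,\, f(V)^{-1}Z)$ for $(X, Y, Z)$ and $(PV, UQ)$ for $(P, Q)$. The identity $s = X f(Y) Z$ rewrites as $s = [Xf(U)^{-1}] \cdot f(D) \cdot [f(V)^{-1}Z]$ by cancellation, and analogously one checks that $[Xf(U)^{-1}] \cdot f(D) = f(PV)$ and $f(D) \cdot [f(V)^{-1}Z] = f(UQ)$. The main obstacle is the diagonalization in the previous paragraph; everything else is bookkeeping.
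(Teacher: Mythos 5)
Your proof is correct and takes essentially the same approach as the paper: both reduce to Smith normal form over the valuation domain via elementary row and column operations (you give the explicit pivot argument where the paper cites Fuchs--Salce). The only difference is cosmetic---you pad $Y$ to a square matrix before diagonalizing, whereas the paper diagonalizes the rectangular $Y$ and then crops to a square diagonal by discarding zero rows/columns.
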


\begin{proof}
Let $(P,Y,Q)$ be an arbitrary Mazet presentation for $s \in S$. First we can turn $Y = (y_{ij})$ into a Smith normal form (that is, $y_{ij} = 0$ unless $i=j$ and $R \supseteq y_{11}R \supseteq y_{22}R \supseteq \cdots$) by applying equivalent row and column operations to $Y$ and changing $P$ and $Q$ correspondingly. Indeed, the same proof as for discrete valuation domains applies and this is again closely related to the fact that, if we consider $Y$ as a presentation matrix of an $R$-module $N$, then $N$ is a direct sum of cyclically presented modules by~\cite[Theorem I.7.9]{FS01}.

Second, if $Y$ is a diagonal $m \times n$ matrix, we can crop it to a square matrix of size $\min(m,n)$ and truncate $P$ and $Q$ correspondingly. As we have left out only zero entries, this will still be a presentation for $s$.
\end{proof}

Consider now a homological epimorphism $f\dd R \to S$. The following is an easy consequence of the results in~\S\ref{subsec:hom-epi-to-intervals}.

\begin{lem} \label{lem:embed-S}
Let $R$ be a valuation domain and $f\dd R \to S$ be a homological epimorphism. Then we can identify $S$ with a subring of $\prod_{[\jfr,\q] \in \I(f)} R_\q/\jfr$ and $f$ with the canonical map obtained from $R \to \prod_{[\jfr,\q] \in \I(f)} R_\q/\jfr$ by restriction.
\end{lem}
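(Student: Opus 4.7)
The plan is to construct an explicit ring homomorphism $\Phi\dd S \to \prod_{[\jfr,\q] \in \I(f)} R_\q/\jfr$ by gluing together the isomorphisms provided by Proposition~\ref{prop:hom-epi-vd}(2), and then show that $\Phi$ is injective using a standard localization argument. Note that $S$ is commutative by Proposition~\ref{prop:silver-lazard}(1).

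First, I would recall that by the construction preceding Definition~\ref{def:intervals}, the elements of $\I(f)$ are in bijection with the maximal ideals of $S$: each $[\jfr,\q] \in \I(f)$ arises from a unique maximal ideal $\n \subseteq S$, with $\q = f\inv(\n)$. By Proposition~\ref{prop:hom-epi-vd}(2), the localization map $\psi_\n\dd S \to S_\n$ is surjective and fits into a commutative diagram in which the composite $\psi_\n f\dd R \to S_\n$ is equivalent to the canonical homological epimorphism $R \to R_\q/\jfr$. Fixing such isomorphisms $S_\n \cong R_\q/\jfr$ (one for each $[\jfr,\q] \in \I(f)$) and assembling the resulting ring homomorphisms into a product, I obtain a ring homomorphism
\[
\Phi\dd S \la \prod_{[\jfr,\q] \in \I(f)} R_\q/\jfr
\]
whose composition with $f$ is, by construction, the canonical map $R \to \prod_{[\jfr,\q]} R_\q/\jfr$.

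Second, I would verify that $\Phi$ is injective. For each maximal ideal $\n \subseteq S$ and each $s \in S$, one has $\psi_\n(s) = 0$ in $S_\n$ precisely when $\Ann_S(s) \not\subseteq \n$. Therefore $s \in \ker \Phi$ if and only if $\Ann_S(s)$ is contained in no maximal ideal of $S$, which by Zorn forces $\Ann_S(s) = S$, i.e.\ $s = 0$. Hence $\Phi$ realizes $S$ as a subring of $\prod_{[\jfr,\q]} R_\q/\jfr$, and the factorization of the canonical map through $\Phi$ identifies $f$ with its restriction, as claimed.

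There is no serious obstacle here; all the substance is already packaged in Proposition~\ref{prop:hom-epi-vd}. The only thing to be careful about is making sure the isomorphisms $S_\n \cong R_\q/\jfr$ chosen on each factor are mutually consistent with the given map $f$, but this is automatic from the statement of Proposition~\ref{prop:hom-epi-vd}(2), which asserts the equivalence of $\psi_\n f$ with the canonical map $R \to R_\q/\jfr$.
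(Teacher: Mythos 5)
Your proof is correct and follows essentially the same route as the paper: inject $S$ into $\prod_{\n \in \Max S} S_\n$ (the paper simply calls this ``clear,'' whereas you spell out the annihilator argument) and then invoke Proposition~\ref{prop:hom-epi-vd}(2) to identify each $S_\n$ canonically with the corresponding $R_\q/\jfr$.
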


\begin{proof}
Clearly the homomorphism $S \to \prod_{\n \in \Max S} S_\n$ is injective. The rest is easily deduced from Proposition~\ref{prop:hom-epi-vd} since we can canonically identify each $S_\n$ with $R_\q/\jfr$ for some $[\jfr,\q] \in \I(f)$.
\end{proof}

Now we establish the key fact: The components of any fixed element $s \in S$ in $\prod_{[\jfr,\q] \in \I(f)} R_\q/\jfr$ can come only from finitely many elements in $R$.

\begin{prop} \label{prop:loc-const}
Let $R$ be a valuation domain and $f\dd R \to S$ be a homological epimorphism with $S \subseteq \prod_{[\jfr,\q] \in \I(f)} R_\q/\jfr$ as in the above lemma. Fix an element $s = (s_{[\jfr,\q]})_{[\jfr,\q] \in \I(f)}$. Then there exist an integer $k \ge 1$, and for each $1 \le j \le k$ an interval $[\p_j,\p'_j]$ in $\Spec R$ and an element $r_j \in R_{\p'_j}/\p_j$ such that:
\begin{enumerate}
\item Every $[\jfr,\q] \in \I(f)$ is contained in $[\p_j,\p'_j]$ for some $1 \le j \le k$. That is, we have $\p_j \subseteq \jfr \subseteq \q \subseteq \p'_j$.
\item Whenever $1 \le j \le k$ and $[\jfr,\q] \in \I(f)$ are such that $[\jfr,\q]$ is contained in $[\p_j,\p'_j]$, then $s_{[\jfr,\q]}$ is the image of $r_j$ under the canonical map $R_{\p'_j}/\p_j \to R_\q/\jfr$.
\end{enumerate}
\end{prop}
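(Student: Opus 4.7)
The plan is to exploit the finite Mazet presentation of $s$ guaranteed by Lemma~\ref{lem:mazet-VD} together with the embedding of Lemma~\ref{lem:embed-S}. Writing $s = \sum_{i=1}^{n} x_i f(y_i) z_i$ with $y_i \in R$ non-zero (zero terms contribute nothing), $x_i, z_i \in S$, and $f(p_i) = x_i f(y_i)$, $f(q_i) = f(y_i) z_i$ for certain $p_i, q_i \in R$, I would attach to each index the threshold $\p_i := \sqrt{(y_i)}$, the smallest prime of $R$ containing $y_i$. For any fixed $[\jfr, \q] \in \I(f)$, the element $y_i$ falls into exactly one of three types inside the valuation domain $R_\q/\jfr$: (A) $\p_i \subseteq \jfr$, so that $\bar y_i = 0$; (B) $\jfr \subsetneq \p_i \subseteq \q$, so that $\bar y_i$ is a non-zero non-unit; or (C) $\q \subsetneq \p_i$, so that $\bar y_i$ is invertible.

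The next and crucial step is to observe that, since the intervals of $\I(f)$ are pairwise disjoint (Lemma~\ref{lem:spectrum-epi}), each threshold $\p_i$ lies in the half-open part $(\jfr,\q]$ of at most one element of $\I(f)$. Consequently at most $n$ intervals of $\I(f)$ exhibit type~(B) for some $y_i$; call these the bad intervals. For every bad $[\jfr^*,\q^*]$ I would create a singleton piece with $\p_j := \jfr^*$, $\p'_j := \q^*$, and $r_j := s_{[\jfr^*,\q^*]}$; condition~(2) of the proposition for this piece is a tautology, again by disjointness. The remaining good intervals I would group into bands $J_a$ indexed by $a \in \{0, 1, \dots, t\}$, where $\p_{(1)} \subsetneq \dots \subsetneq \p_{(t)}$ are the distinct thresholds (with formal endpoints $\p_{(0)} = 0$ and $\p_{(t+1)} = R$): $[\jfr,\q] \in J_a$ iff $\p_{(a)} \subseteq \jfr$ and $\q \subsetneq \p_{(a+1)}$. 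For each nonempty $J_a$ I would take $\p_j := \bigcap_{[\jfr,\q] \in J_a} \jfr$ and $\p'_j := \bigcup_{[\jfr,\q] \in J_a} \q$, both prime as the intersection and union of a chain of primes in the totally ordered poset $\Spec R$.

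The element attached to a good piece would be $r_j := \sum_i p_i q_i / y_i \in R_{\p'_j}/\p_j$, the sum ranging over those $i$ for which $y_i$ is of type~(C) on $J_a$; this is legitimate because any such $y_i$ avoids $\bigcup_{[\jfr,\q]\in J_a} \q = \p'_j$ and is therefore invertible in $R_{\p'_j}/\p_j$. Condition~(1) holds by construction, and for condition~(2) I would argue that any $[\jfr,\q] \in \I(f)$ contained in $[\p_j,\p'_j]$ must already belong to $J_a$: the open interval $(\p_j,\p'_j)$ of $\Spec R$ sits strictly between two consecutive thresholds and so contains no $\p_i$, which excludes the presence of a bad interval. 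For such $[\jfr,\q] \in J_a$ the image of $r_j$ in $R_\q/\jfr$ reduces to $\sum_{\text{type (C)}} \bar p_i \bar q_i / \bar y_i$, and the relations $\bar p_i = \bar x_i \bar y_i$, $\bar q_i = \bar y_i \bar z_i$ together with the invertibility of $\bar y_i$ yield $\bar p_i \bar q_i / \bar y_i = \bar x_i \bar y_i \bar z_i$; the type~(A) contributions to $\sum_i \bar x_i \bar y_i \bar z_i$ vanish, so the total equals $s_{[\jfr,\q]}$.

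I expect the main obstacle to be type~(B): in a valuation quotient where $\bar y_i$ is a non-zero non-unit, there is no uniform way to recover $\bar x_i$ from $\bar p_i$ and $\bar y_i$, so no single $r_j$ can absorb type~(B) contributions over more than one interval. What rescues the proposition is precisely the disjointness of the intervals of $\I(f)$, which confines the finitely many occurrences of type~(B) to finitely many individual intervals, each of which can then be isolated as its own singleton piece.
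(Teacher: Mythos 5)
Your proof is correct, and it is a recognizable variant of the paper's own argument: both exploit a diagonal Mazet presentation $(P,Y,Q)$ of $s$ and produce a finite cover of $\I(f)$ by ``nice'' intervals on which a single element represents all the components $s_{[\jfr,\q]}$. The differences are genuine and worth noting. The paper cuts $\Spec R$ using \emph{all} principal ideals generated by entries of $P$, $Y$, $Q$ (ordered as $I_1\subsetneqq\cdots\subsetneqq I_\ell$, with $\p_j=\sqrt{I_j}$ and $\p'_j$ the largest prime inside $I_{j+1}$) so that all entries of $P,Y,Q$ become zeros or units locally, and then it dichotomizes: either a unique $r_j$ with Mazet presentation $(P,Y,Q)$ exists over $R_{\p'_j}/\p_j$, or no interval of $\I(f)$ fits inside $[\p_j,\p'_j]$ at all. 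You instead cut only at the radicals $\p_i=\sqrt{(y_i)}$ of the diagonal entries of $Y$, observe that on any interval of $\I(f)$ not ``pierced'' by some $\p_i$ (your good intervals, collected into bands $J_a$) each $y_i$ is either zero or a unit, and then define $r_j$ \emph{explicitly} as $\sum_{\textrm{type C}}p_iq_i/y_i$; the constraints coming from $P$ and $Q$ never need to be checked because the type-(A) terms drop out automatically and the formula is always solvable for type-(C) indices. What the paper's extra cut points from $P,Q$ and its solvability dichotomy accomplish, your explicit formula and band decomposition accomplish more cheaply; the finitely many ``bad'' intervals that must be handled as singletons appear in the paper as the uncovered intervals straddling some $I_i$. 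You also correctly supply the delicate point that your construction is implicitly relying on: for a nonempty band $J_a$ with a real threshold $\p_{(a+1)}$ above it, the fact that the corresponding $y_{i}$ avoids every $\q'$ (from $J_a$) forces $\p'_j\subsetneqq\p_{(a+1)}$, which in turn guarantees that any $[\jfr,\q]\subseteq[\p_j,\p'_j]$ actually lies in $J_a$ (rather than being a bad interval or belonging to a later band); without this, condition~(2) could fail at the right end of the band.
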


\begin{proof}
By Proposition~\ref{prop:no-gaps-hom-epi}(2) and Zorn's lemma, $\I(f)$ possesses a (unique) interval which is maximal with respect to the order on $\Inter R$. Let us denote this interval by $[\ifr',\n]$. Consider now a Mazet presentation $(P,Y,Q)$ of $s \in S$ where $Y$ is a square diagonal matrix, and consider all principal ideals of $R$ generated by the entries in $P,Y,Q$ which are contained in $\ifr'$. Ordering these ideals by inclusion and removing duplicities results in a finite list
\[ I_1 \subsetneqq I_2 \subsetneqq \dots \subsetneqq I_\ell \]
of principal ideals of $R$. In order to facilitate the discussion, we also put $I_0 = 0$ and take $I_{\ell+1} = \ifr'$.

Now consider an integer $j$ \st $0 \le j \le \ell$, let $\p_j = \sqrt I_j$ and let $\p'_j$ be the maximal prime ideal \st $\p'_j \subseteq I_{j+1}$. Since a non-zero non-maximal prime ideal cannot be principal, we have $I_j \subsetneqq \p_j$ unless $j=0$ and $\p'_j \subsetneqq I_{j+1}$ unless $j=\ell$. In particular, the images of $P,Y,Q$ under the canonical map $g_j\dd R \to R_{\p_j'}/\p_j$ have either zeros or units in all entries. Now there are two possibilities: either there is an element $r_j \in R_{\p'_j}/\p_j$ with the Mazet presentation $(P,Y,Q)$, or there is none. This depends only on  whether the systems of linear equations $X \cdot g_j(Y) = g_j(P)$ and $g_j(Y) \cdot Z = g_j(Q)$ have solutions $X,Z$ over $R_{\p_j'}/\p_j$. In the first case such an element $r_j$ is unique and whenever $[\jfr,\q] \in \I(f)$ is contained in $[\p_j,\p_j']$, then $s_{[\jfr,\q]}$ must be the image of $r_j$. In the second case we claim that there cannot exist any $[\jfr,\q] \in \I(f)$ which is contained in $[\p_j,\p_j']$. Indeed, the systems of equations $X \cdot g_j(Y) = g_j(P)$ and $g_j(Y) \cdot Z = g_j(Q)$ have a very easy form since $g_j(Y)$ is diagonal and every entry of $g_j(Y),g_j(P),g_j(Q)$ is either a unit or zero. If there existed $[\jfr,\q] \in \I(f)$ inside $[\p_j,\p_j']$, units would stay units and zeros would stay zeros under the homomorphism $R_{\p'_j}/\p_j \to R_\q/\jfr$, so there could not be any element with the Mazet presentation $(P,Y,Q)$ in $R_\q/\jfr$ either. But clearly $s_{[\jfr,\q]}$ is such an element, a contradiction. This proves the claim.

Let us put all the intervals $[\p_j,\p_j']$ for which there exists $r_j$ as above on our list. It only can happen that some $[\jfr,\q] \in \I(f)$ is not covered by any such $[\p_j,\p_j']$ if either we have $\jfr \subseteq I_i \subseteq \q$ for some $0 \le i \le \ell$, or if $[\jfr,\q] = [\ifr',\n]$. In either case, we simply add $[\jfr,\q]$ and $s_{[\jfr,q]}$ to our list, resulting in at most finitely many additional intervals. By the very construction, we have obtained a collection of intervals and elements as in the statement.
\end{proof}

\begin{rem} \label{rem:irredundant}
By possibly removing finitely many intervals from the collection obtained by Proposition~\ref{prop:loc-const}, we may without loss of generality assume that the collection is irredundant, i.e.
\begin{enumerate}
\item no $[\p_j,\p'_j]$ is contained in $[\p_i,\p'_i]$ for any $i \ne j$, and
\item each $[\p_j,\p'_j]$ contains an interval from $\I(f)$.
\end{enumerate}
If we order the intervals such that $\p_1 \subseteq \p_2 \subseteq \dots \subseteq \p_k$, the irredundancy implies that $\p'_j \subseteq \p'_{j+1}$ for each $1 \le i < k$.
\end{rem}

Another important fact is that a much stronger reduction of the number of intervals is possible. As it turns out, we will be able to glue together any pair of overlapping intervals thanks to the following instance of the sheaf axiom (for a scheme-theoretic interpretation see~\cite[Theorem 3.3]{Sch12}).

\begin{lem} \label{lem:gluing}
Let $R$ be a valuation domain, let $k \ge 1$, and suppose that we are given for each $1 \le j \le k$ an interval $[\p_j,\p'_j]$ in $\Spec R$ and an element $r_j \in R_{\p'_j}/\p_j$. Suppose further that
\begin{enumerate}
\item $\p_j \subseteq \p_{j+1} \subseteq \p'_j \subseteq \p'_{j+1}$, and
\item the images of $r_j$ and $r_{j+1}$ under the canonical maps coincide in $R_{\p'_j}/\p_{j+1}$
\end{enumerate}
for each $1 \le j < k$. Then there exists a unique element $r \in R_{\p'_k}/\p_1$ \st the image of $r$ under $R_{\p'_k}/\p_1 \to R_{\p'_j}/\p_j$ equals $r_j$ for each $1 \le j \le k$.
\end{lem}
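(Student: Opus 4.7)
My plan is to proceed by induction on $k$, reducing to the case $k=2$ which amounts to verifying that a certain commutative square is a pullback over the valuation domain~$R$. The backbone of the proof is the following elementary observation: for primes $\p\subseteq\p'$ in a valuation domain, one has $\p R_{\p'}=\p$ as a subset of $R_{\p'}$. Indeed, for $p\in\p$ and $s\in R\setminus\p'$, the total ordering of ideals gives either $s\mid p$, in which case primality of $\p$ (with $s\notin\p$) forces $p/s\in\p\subseteq R$, or $p\mid s$, which would force $s\in\p'$ and contradict $s\notin\p'$.

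For the induction, the case $k=1$ is trivial. Assuming the statement for $k-1$ intervals, I would apply the inductive hypothesis to the subdata $[\p_1,\p'_1],\dots,[\p_{k-1},\p'_{k-1}]$ and $r_1,\dots,r_{k-1}$ to produce a unique $r'\in R_{\p'_{k-1}}/\p_1$ whose image in $R_{\p'_j}/\p_j$ equals $r_j$ for each $1\le j\le k-1$. In particular $r'$ restricts to $r_{k-1}$, so combining with hypothesis~(2) for the pair $(r_{k-1},r_k)$ shows that the images of $r'$ and $r_k$ coincide in $R_{\p'_{k-1}}/\p_k$. The overlapping pair of intervals $[\p_1,\p'_{k-1}]$ and $[\p_k,\p'_k]$ again satisfies the nesting hypothesis (using $\p_k\subseteq\p'_{k-1}$ from the original chain), so the case $k=2$ glues $r'$ and $r_k$ into the desired $r$, and the inductive verification that the image of $r$ in $R_{\p'_j}/\p_j$ equals $r_j$ for all $j$ is routine.

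It remains to treat the case $k=2$, for which I would show that the commutative square
\[
\xymatrix@R=1.5em{
R_{\p'_2}/\p_1 \ar[r]\ar[d] & R_{\p'_2}/\p_2 \ar[d] \\
R_{\p'_1}/\p_1 \ar[r] & R_{\p'_1}/\p_2
}
\]
is Cartesian in $\ModR$; uniqueness and existence of $r$ follow directly. The rows are surjections with kernel $\p_2/\p_1$, where by the key observation $\p_2$ denotes the same subset of each ambient ring. For injectivity of the comparison map, suppose $c=r/s\in R_{\p'_2}/\p_1$ (with $r\in R$, $s\in R\setminus\p'_2$) maps to zero in both $R_{\p'_2}/\p_2$ and $R_{\p'_1}/\p_1$; the second condition yields $t\in R\setminus\p'_1$ and $s'\in R\setminus\p'_2$ with $s'tr\in\p_1$, and since $s't\notin\p'_1\supseteq\p_1$, primality of $\p_1$ forces $r\in\p_1$, so $c=0$. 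For surjectivity, given compatible $a\in R_{\p'_2}/\p_2$ and $b\in R_{\p'_1}/\p_1$, choose lifts $\hat a\in R_{\p'_2}$ and $\hat b\in R_{\p'_1}$; their images agreeing in $R_{\p'_1}/\p_2$ yields $p:=\hat a-\hat b\in \p_2 R_{\p'_1}=\p_2\subseteq R\subseteq R_{\p'_2}$, and then $c:=\hat a-p$ modulo $\p_1$ is simultaneously a lift of $a$ and of $b$.

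The main obstacle is the careful bookkeeping of ideals across the different localizations and quotients, verifying that they can be consistently identified on both sides of the would-be pullback square. Once the key identity $\p R_{\p'}=\p$ for nested primes is in hand, the rest is an elementary diagram chase, and the inductive step is straightforward provided one double-checks that the new overlapping pair of intervals inherits the nesting condition~(1) from the original chain.
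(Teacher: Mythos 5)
Your proof is correct and follows essentially the same route as the paper: induction on $k$ reducing to the $k=2$ pullback square. The paper simply asserts the $k=2$ case is "straightforward checking" after a WLOG reduction to $\p_1=0$ and $\p'_2$ maximal, whereas you fill in the verification directly via the (correct and useful) observation that $\p R_{\p'}=\p$ for nested primes in a valuation domain; this is a stylistic rather than a substantive difference.
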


\begin{proof}
There is nothing to prove for $k=1$ and the case $k=2$ just amounts to the straightforward checking that the square with canonical maps
\[
\begin{CD}
R_{\p'_2}/\p_1   @>>>   R_{\p'_1}/\p_1   \\
    @VVV                     @VVV        \\
R_{\p'_2}/\p_2   @>>>   R_{\p'_1}/\p_2
\end{CD}
\]
is a pull-back of rings. Note that there we can without loss of generality assume that $\p_1=0$ and $\p'_2$ is the maximal ideal.

We proceed by induction for $k > 2$. By inductive hypothesis, there is a unique element $r' \in R_{\p'_{k-1}}/\p_1$ \st the image of $r'$ under the canonical map $R_{\p'_{k-1}}/\p_1 \to (R_{\p'_j}/\p_j)$ equals $r_j$ for all $1 \le j < k$. Furthermore, the images of $r'$ and $r_k$ in $R_{\p'_{k-1}}/\p_k$ coincide by assumption (2) applied to $j = k-1$. Thus, we can glue $r'$ and $r_k$ to a unique element $r \in R_{\p'_k}/\p_1$ using the argument for $k=2$.
\end{proof}

As a consequence, we obtain the following dichotomy.

\begin{prop} \label{prop:dichotomy}
Let $R$ be a valuation domain, let $f\dd R \to S$ be a homological epimorphism with $S \ne 0$, and let $\I(f)$ be the collection of intervals as in Definition~\ref{def:intervals}. Then
\begin{enumerate}
\item either $\I(f)$ contains a single element,
\item or there are intervals $[\jfr,\q] < [\jfr',\q']$ in $\I(f)$ with no other interval of $\I(f)$ between them.
\end{enumerate}
In particular, either $S$ is local or it has a non-trivial idempotent element.
\end{prop}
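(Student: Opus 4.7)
The plan is to prove the dichotomy by contradiction: I assume $|\I(f)| \geq 2$ and derive (a) the existence of a non-trivial idempotent in $S$, and (b) the existence of a consecutive pair in $\I(f)$. The first task is easy and covers the ``in particular'' clause; the real work is the second.

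For the idempotent, I pick two distinct intervals of $\I(f)$ corresponding to maximal ideals $\n_1 \neq \n_2$ of $S$ via Proposition~\ref{prop:hom-epi-vd}(2). Setting $J_i = \Ker(S \to S_{\n_i})$, the surjectivity of $S \to S_{\n_i}$ together with the locality of $S_{\n_i}$ forces $J_i$ to lie only in $\n_i$, so $J_1 + J_2 = S$. The Chinese Remainder Theorem then yields $S/(J_1 \cap J_2) \cong S_{\n_1} \times S_{\n_2}$, and I lift $(1,0)$ to a non-trivial idempotent $e \in S$.

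To locate a consecutive pair, I exploit Lemma~\ref{lem:embed-S} to view $e = (e_{[\jfr,\q]})$ inside $\prod_{[\jfr,\q] \in \I(f)} R_\q/\jfr$. Each component is idempotent in a local ring, hence $0$ or $1$, so $\I(f) = A \sqcup B$ with $A = e^{-1}(1)$ and $B = e^{-1}(0)$ both non-empty. Proposition~\ref{prop:loc-const}, combined with Remark~\ref{rem:irredundant}, supplies an irredundant cover $[\p_1,\p'_1] \subsetneq \cdots \subsetneq [\p_k,\p'_k]$ (with strict containment on both endpoints) and markers $r_j \in R_{\p'_j}/\p_j$ representing $e$; each $r_j \in \{0,1\}$ by locality, and both values occur since $A, B$ are both non-empty. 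I then pick an index $j$ with $r_j \neq r_{j+1}$; WLOG $r_j = 1$ and $r_{j+1} = 0$. Set $U = [\p_j,\p'_j] \cap \I(f) \subseteq A$ and $V = [\p_{j+1},\p'_{j+1}] \cap \I(f) \subseteq B$; both are non-empty by irredundancy.

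The remaining step is to show that $U$ has a maximum $[\jfr,\q]$, $V$ has a minimum $[\jfr',\q']$, and that $[\jfr,\q] < [\jfr',\q']$ form a consecutive pair in $\I(f)$. Existence of the maximum is argued by contradiction: if $U$ had none, Proposition~\ref{prop:no-gaps-hom-epi}(2) would yield $\alpha^* = [\bigcup_\alpha \jfr_\alpha, \q^*] \in \I(f)$ strictly above $U$ whose lower endpoint lies in $\p'_j$, forcing $\q^* \supsetneq \p'_j$; locating $\alpha^*$ in some $[\p_i,\p'_i]$ then forces $i > j$ and conflicts with either irredundancy or with the equality $\I(f) \cap [\p_{j+1},\p'_j] = \emptyset$ that holds because $r_j \neq r_{j+1}$ prevents a common element of $\I(f)$ in overlapping cover intervals. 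The minimum of $V$ is obtained symmetrically using Proposition~\ref{prop:no-gaps-hom-epi}(1). Finally, a putative intermediate $[\jfr'',\q''] \in \I(f)$ with $[\jfr,\q] < [\jfr'',\q''] < [\jfr',\q']$ must lie in some $[\p_i,\p'_i]$, and the constraints $\jfr'' \supsetneq \q$ and $\q'' \subsetneq \jfr'$ together with the strict order of the cover indices contradict maximality of $[\jfr,\q]$ in $U$ or minimality of $[\jfr',\q']$ in $V$. I expect the main technical obstacle to be exactly this last bookkeeping --- aligning the order on $\I(f)$ with the nesting of cover intervals in $\Spec R$ when the cover intervals overlap (i.e.\ $\p_{j+1} \subseteq \p'_j$) requires careful attention to which elements of $\I(f)$ can live in the overlap region.
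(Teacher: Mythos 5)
The central difficulty is the step ``I lift $(1,0)$ to a non-trivial idempotent $e \in S$.'' You have a surjection $S \twoheadrightarrow S/(J_1 \cap J_2) \cong S_{\n_1} \times S_{\n_2}$ whose kernel $J_1 \cap J_2$ is in general non-zero (there may be further maximal ideals of $S$), and there is no a priori reason that the idempotent $(1,0)$ lifts. The ideals $J_i = \Ker(S \to S_{\n_i})$ are indeed pure (since $S \to S_{\n_i}$ is a flat surjection), and $J_1 + J_2 = S$ as you observe; but comaximal pure ideals with non-zero intersection do \emph{not} force the existence of a non-trivial idempotent. The standard example $C([0,1])$ with $I$ (resp.\ $J$) the functions vanishing near $0$ (resp.\ near $1$) shows that purity and comaximality alone cannot produce an idempotent---$C([0,1])$ is connected. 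That ring is of course not of weak global dimension one, so the idempotent does exist in our setting, but its existence must be extracted from the finer structure of $S$. This is exactly what the paper's proof does, and in the opposite order from yours: it first establishes the dichotomy by assuming $\I(f)$ is densely ordered, showing via Proposition~\ref{prop:loc-const}, irredundancy, and the gluing Lemma~\ref{lem:gluing} that every element of $S$ is in the image of $R_\n/\ifr \to S$, so $S$ is local and $\I(f)$ is a singleton. Only then, knowing a consecutive pair $[\jfr,\q] < [\jfr',\q']$ exists, does it pick $x \in \jfr' \setminus \q$ and observe that $y = f(x)$ makes $S \to S[1/y]$ surjective, from which a relation $y^n = t\,y^{n+1}$ in $S$ yields the idempotent $e = t^n y^n$. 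Your scheme wants to go in the opposite direction (idempotent first, consecutive pair second), and the first step has a gap that cannot be patched by CRT alone.

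Two further points, less fatal but worth flagging. First, ``each $r_j \in \{0,1\}$ by locality'' is not literally correct: $r_j$ is the element of $R_{\p'_j}/\p_j$ determined by the Mazet presentation, and nothing forces $r_j$ itself to be idempotent. What \emph{is} true (and can be deduced by a valuation-theoretic argument) is that all the components $e_{[\jfr,\q]}$ with $[\jfr,\q]$ contained in a fixed $[\p_j,\p'_j]$ take the same value in $\{0,1\}$; this is what your argument actually needs, but it requires a justification you do not give. Second, you correctly anticipate that the ``bookkeeping'' step is delicate: showing $U$ has a maximum using Proposition~\ref{prop:no-gaps-hom-epi}(2) runs into the possibility that the supremum interval $[\bigcup \jfr_\ell, \q^*]$ lies in a later cover interval with $e$-value $0$, and then $U$ genuinely has no maximum; in that case the consecutive pair must be extracted differently than your sketch indicates. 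The paper avoids all of this bookkeeping by running the argument through the gluing lemma instead.
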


\begin{proof}
Suppose that (2) does not hold, or equivalently that the order on $\I(f)$ is dense. Suppose further that $s = (s_{[\jfr,\q]})_{[\jfr,\q] \in \I(f)} \in S$ and $[\p_j,\p'_j]$ and $r_j \in R_{\p'_j}/\p_j$ is a corresponding collection of intervals and elements as in Proposition~\ref{prop:loc-const}, which is irredundant and ordered as in Remark~\ref{rem:irredundant}.

We first claim that then $\p_{j+1} \subseteq \p'_j$ for each $1 \le j < k$. Indeed, suppose to the contrary that for instance $\p'_1 \subsetneqq \p_2$. Then using Proposition~\ref{prop:no-gaps-hom-epi} and Zorn's lemma we can find
\begin{itemize}
\item a maximal element $[\jfr',\q']$ among those elements of $\I(f)$ which are contained in $[\p_1,\p'_1]$ and
\item a minimal element $[\jfr'',\q'']$ among those elements of $\I(f)$ which are contained in $[\p_2,\p'_2]$.
\end{itemize}
Since each element of $\I(f)$ is contained in some interval $[\p_j,\p'_j]$, there cannot exist any element of $\I(f)$ between $[\jfr',\q']$ and $[\jfr'',\q'']$, contradicting that $\I(f)$ is densely ordered. This establishes the claim.

Since also Lemma~\ref{lem:gluing}(2) is satisfied for any collection of intervals and elements coming from the proof of Proposition~\ref{prop:loc-const} (all $r_j$ have the same Mazet presentation over $R$), there is an element $r \in R_{\p_k}/\p_1$ such that the image of $r$ under $R_{\p_k}/\p_1 \to R_\q/\jfr$ equals $s_{[\jfr,\q]}$ for each $[\jfr,\q] \in \I(f)$.

Let us rephrase what we have just shown. Thanks to Proposition~\ref{prop:no-gaps-hom-epi}, $\I(f)$ has a unique minimal element $[\ifr,\p]$ and a unique maximal element $[\ifr',\n]$. If $\I(f)$ is densely ordered, we have shown that for every $s \in S$ there exists $r \in R_\n/\ifr$ \st $s$ is the image of $r$ under the morphism $R_\n/\ifr \to S$ induced by $f$. Put yet in other words, if $\I(f)$ is densely ordered, then $R_\n/\ifr \to S$ is surjective, $S$ is necessarily local, and $\I(f)$ has a single element by the very definition. This proves the dichotomy between (1) and (2) in the statement.

For the second part, suppose that $S$ is not local, fix some $[\jfr,\q] < [\jfr',\q']$ in $\I(f)$ with no other interval between them and fix $x \in \jfr' \setminus \q$. Then $S \to S[1/y]$ is surjective for $y = f(x)$ since $S_{\n} \to S_\n[1/y]$ is either zero or an isomorphism for every $\n \in \Max S$. Now the same argument as in step~(b) of the proof of Proposition~\ref{prop:no-gaps-hom-epi}(1) provides us with a non-trivial idempotent $e \in S$.
\end{proof}

\begin{cor} \label{cor:disconnected}
Given a homological epimorphism $f\dd R \to S$ where $R$ is a valuation domain, and given any $[\jfr_0,\q_0] < [\jfr_1,\q_1]$ in $\I(f)$, there are intervals $[\jfr,\q],[\jfr',\q']$ in $\I(f)$ \st
\[ [\jfr_0,\q_0] \le [\jfr,\q] < [\jfr',\q'] \le [\jfr_1,\q_1] \]
and there is no other interval in $\I(f)$ between $[\jfr,\q]$ and $[\jfr',\q']$.
\end{cor}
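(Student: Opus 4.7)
The plan is to restrict $f$ to a homological epimorphism $h\dd R \to T$ whose collection of intervals $\I(h)$ is precisely the sub-chain of $\I(f)$ bounded by $[\jfr_0,\q_0]$ and $[\jfr_1,\q_1]$, and then invoke Proposition~\ref{prop:dichotomy} on $h$.

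To build $T$, I would set $T = S_{\q_1}/\jfr_0 S_{\q_1}$, where $S_{\q_1}$ is the localization of $S$ at the multiplicative set $f(R \setminus \q_1)$, and let $h$ be the composite $R \to S \to S_{\q_1} \to T$. This composite is a homological epimorphism by Lemma~\ref{lem:hom-epi-localization}(1): the first arrow is homological by hypothesis, the localization $S \to S_{\q_1}$ is a flat (hence homological) epimorphism, and the quotient $S_{\q_1} \to T$ is a homological epimorphism by Lemma~\ref{lem:kernel-idempotent}, since $\jfr_0 S_{\q_1}$ is idempotent in $S_{\q_1}$ (as $\jfr_0^2 = \jfr_0$ already in $R$).

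The crux is to verify the identification $\I(h) = \{[\jfr_*,\q_*] \in \I(f) \mid [\jfr_0,\q_0] \le [\jfr_*,\q_*] \le [\jfr_1,\q_1]\}$. By Lemma~\ref{lem:spectrum-epi}, $\Spec S$ is the disjoint union of the chains $[\jfr_*,\q_*]$ as $[\jfr_*,\q_*]$ ranges over $\I(f)$; the localization $S \to S_{\q_1}$ retains the primes $\n$ of $S$ with $f\inv(\n) \subseteq \q_1$, and the subsequent quotient by $\jfr_0 S_{\q_1}$ keeps those $\n$ that also satisfy $\jfr_0 \subseteq f\inv(\n)$. The pairwise disjointness of the intervals of $\I(f)$ from Proposition~\ref{prop:hom-epi-vd}(3) is essential here: it forces $[\jfr_1,\q_1]$ to be the only interval of $\I(f)$ containing $\q_1$ and $[\jfr_0,\q_0]$ the only one containing $\jfr_0$, so no surviving interval of $\I(f)$ is truncated by either operation---each is either entirely retained or entirely discarded. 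This bookkeeping, though elementary once disjointness is exploited, is the main technical obstacle of the argument.

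Finally, since $[\jfr_0,\q_0] < [\jfr_1,\q_1]$ are distinct elements of $\I(h)$, the ring $T$ is not local, and Proposition~\ref{prop:dichotomy} furnishes intervals $[\jfr,\q] < [\jfr',\q']$ in $\I(h)$ with no other element of $\I(h)$ between them. Any putative interval $[\jfr_*,\q_*] \in \I(f)$ lying strictly between $[\jfr,\q]$ and $[\jfr',\q']$ would satisfy $[\jfr_0,\q_0] \le [\jfr_*,\q_*] \le [\jfr_1,\q_1]$ and hence belong to $\I(h)$, a contradiction. Thus $[\jfr,\q]$ and $[\jfr',\q']$ are the required intervals in $\I(f)$.
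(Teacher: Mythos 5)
Your proposal is correct and takes essentially the same approach as the paper, which simply states that the result follows from applying Proposition~\ref{prop:dichotomy} to the composition $R \overset{f}\to S \to S_{\q_1}/\jfr_0 S$. You have usefully filled in the bookkeeping that the paper leaves implicit---in particular the verification that $\I$ of the composite is exactly the sub-chain of $\I(f)$ between $[\jfr_0,\q_0]$ and $[\jfr_1,\q_1]$, which hinges on the pairwise disjointness of the intervals from Proposition~\ref{prop:hom-epi-vd}(3).
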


\begin{proof}
This follows by applying Proposition~\ref{prop:dichotomy} to the composition $R \overset{f}\to S \to S_{\q_1}/\jfr_0S$.
\end{proof}

In the non-local case, $S$ is formally similar to a von Neumann regular ring in that the Zariski topology on $\Max S$ is totally disconnected. If $S$ is semihereditary, this similarity can be formalized by noting that the localization of $S$ at the set of all regular elements is von Neumann regular by~\cite[Corollary 4.2.19]{Glaz89}.
Note also that in our situation, the regular elements of $S$ are precisely those $s = (s_{[\jfr,\q]})$ for which each component $s_{[\jfr,\q]}$ is non-zero. Beware, however, that $S$ might \emph{not} be semihereditary:

\begin{expl} \label{expl:non-coherent}
Suppose that $R$ is a valuation domain with a countable descending chain $\ifr_1 \supseteq \ifr_2 \supseteq \cdots$ of idempotent ideals \st the intersection $\q = \bigcap \ifr_i$ is not idempotent. Such an example can be constructed by means of~\cite[Theorem II.3.8]{FS01}, where the value group $(G,\le)$ is taken as follows: We put $H = \bbQ^{(\omega)}$ (a countable direct sum of copies of $\bbQ$) with the antilexicographic order and $G = \bbZ \times H$ with the components lexicographically ordered.

Denote now $R_j = R_{\ifr_{j+1}} \times k(\ifr_j) \times k(\ifr_{j-1}) \times \cdots \times k(\ifr_1)$, where $k(\ifr_j)$ is the residue field of $\ifr_j$, and consider the chain of obvious ring homomorphisms
\[ R \to R_1 \to R_2 \to \cdots \to S = \varinjlim_j R_j. \]
One can check (see the results in \S\ref{subsec:intervals-to-hom-epi}) that $R \to S$ is a homological epimorphism and $\I(f) = \{[0,\q]\} \cap \{[\ifr_j,\ifr_j] \mid j \ge 1 \}$. But there are elements $s = (s_{[\jfr,\q]}) \in S$ \st $s_{[0,q]} \ne 0$ and $s_{[\ifr_j,\ifr_j]} = 0$ for all $j \ge 1$. Then the ideal $sS$ is not finitely presented, hence $S$ is not semihereditary. One can also check that every regular element of $S$ is a unit, so the localization at the set of regular elements is not von Neumann regular.
\end{expl}

\subsection{From intervals to a homological epimorphism}
\label{subsec:intervals-to-hom-epi}

Now we finish the classification of homological epimorphisms starting at a valuation domain. In particular, given a suitable collection $\I \subseteq \Inter R$ (Definition~\ref{def:intervals}) we construct the corresponding homological epimorphism $f_{(\I)}\dd R \to R_{(\I)}$.

\begin{constr} \label{constr:non-semilocal-case}
Suppose that $R$ is a valuation domain and $(\I,\le)$ is a non-empty subchain of $(\Inter R,\le)$ satisfying the conditions implied by Proposition~\ref{prop:no-gaps-hom-epi} and Corollary~\ref{cor:disconnected}. That is, we require:
\begin{enumerate}
\item[(C1)] If $\clS = \{ [\jfr_\ell,\q_\ell] \mid \ell \in \Lambda \}$ is a non-empty subset of $\I$ with no minimal element, then $\I$ contains an element of the form $[\jfr,\bigcap_{\ell\in\Lambda} \q_\ell]$.
\item[(C2)] If $\clS = \{ [\jfr_\ell,\q_\ell] \mid \ell \in \Lambda \}$ is a non-empty subset of $\I$ with no maximal element, then $\I$ contains an element of the form $[\bigcup_{\ell\in\Lambda} \jfr_\ell,\q]$.
\item[(C3)] Given any pair $[\jfr_0,\q_0] < [\jfr_1,\q_1]$ in $\I$, then there are elements $[\jfr,\q],[\jfr',\q']$ in $\I$ \st
\[ [\jfr_0,\q_0] \le [\jfr,\q] < [\jfr',\q'] \le [\jfr_1,\q_1] \]
and there is no other interval in $\I$ between $[\jfr,\q]$ and $[\jfr',\q']$.
\end{enumerate}
Denote by $[\ifr,\p]$ the unique minimal element of $\I$ and by $[\ifr',\n]$ the unique maximal element. Further denote by $R_\I$ the ring product $\prod_{[\jfr,\q] \in \I} R_\q/\jfr$ and by $g_\I\dd R_\n/\ifr \to R_\I$ the canonical ring homomorphism. Clearly $g_\I$ is an embedding.

Consider now a partition of $\I$ into a finite disjoint union $\I = \I_0 \cup \cdots \cup \I_n$ of chains in $\Inter R$ which satisfies two simple conditions:
\begin{enumerate}
\item[(a)] Each $\I_\ell$, $0 \le \ell \le n$, is a subchain of $\I$ and has a minimal element $[\ifr_\ell,\p_\ell]$ and a maximal element $[\ifr'_\ell,\m_\ell]$.
\item[(b)] If $j < \ell$, then $[\jfr,\q] < [\jfr',\q']$ for each $[\jfr,\q] \in \I_j$ and $[\jfr',\q'] \in \I_\ell$.
\end{enumerate}
In other words, we have subdivided $\I$ into finitely many intervals which enjoy properties (1)--(3) as $\I$ does itself.

Using this notation, we define a map
\[ g_{(\I_0, \dots, \I_n)}\dd \prod_{\ell = 0}^n R_{\n_\ell}/\ifr_\ell \la R_\I \]
as the composition of the product of the maps
\[ g_{\I_\ell} \dd R_{\n_\ell}/\ifr_\ell \la R_{\I_\ell} \]
with the obvious isomorphism $\prod_{\ell=0}^n R_{\I_\ell} \cong R_\I$. Again $g_{(\I_0, \dots, \I_n)}$ is an embedding.

Another easy observation reveals that the images of $g_{(\I_0, \dots, \I_n)}$, where $(\I_0, \dots, \I_n)$ varies over all partitions of $\I$ satisfying conditions (a) and (b) above, form a direct system of subrings of $R_\I$. We denote by $R_{(\I)}$ the direct union of all these images and by
\[ f_{(\I)}\dd R \la R_{(\I)} \]
the ring homomorphism induced by the composition $R \to R_\n/\ifr \overset{g_\I}\to R_\I$.
\end{constr}

The highlight of the section is the following theorem, which together with Theorem~\ref{thm:smashing-classif} classifies smashing localizations of $\Der R$.

\begin{thm} \label{thm:hom-epi-vd}
Let $R$ be a valuation domain. Then there is a bijection between:
\begin{enumerate}
\item[(i)] Subchains $\I$ of $\Inter R$ (cf.\ Definition~\ref{def:intervals}) which satisfy conditions (C1)--(C3) from Construction~\ref{constr:non-semilocal-case}.
\item[(ii)] Equivalence classes of homological epimorphisms $f\dd R \to S$.
\end{enumerate}

The bijection is given by assigning to a non-empty $\I$ from (i) the ring homomorphism $f_{(\I)}\dd R \to R_{(\I)}$ from Construction~\ref{constr:non-semilocal-case}. We assign $R \to 0$ to $\I = \emptyset$.
The converse is given by sending $f\dd R \to S$ to $\I = \I(f)$ (see Definition~\ref{def:intervals}).
\end{thm}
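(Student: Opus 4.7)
The direction (ii) $\to$ (i) sending a homological epimorphism $f\dd R\to S$ to $\I(f)$ is essentially already in place: the elements of $\I(f)$ form a chain in $\Inter R$ by Lemma~\ref{lem:embed-S} together with Proposition~\ref{prop:hom-epi-vd}(3), and conditions (C1)--(C3) hold by Proposition~\ref{prop:no-gaps-hom-epi} and Corollary~\ref{cor:disconnected}. The plan is therefore to show (a) that $f_{(\I)}\dd R\to R_{(\I)}$ is a homological ring epimorphism for every $\I$ as in (i), (b) that $\I(f_{(\I)})=\I$, and (c) that $f_{(\I(f))}\sim f$.

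For (a), first handle the semilocal case where $\I$ is finite. The singleton partition yields $R_{(\I)}=\prod_{[\jfr,\q]\in\I} R_\q/\jfr$, and each component map $R\to R_\q/\jfr$ is a homological epimorphism as the composition of $R\to R/\jfr$ (a homological epimorphism since $\jfr$ is idempotent, cf.\ Lemma~\ref{lem:kernel-idempotent}) with the flat epimorphism $R/\jfr\to R_\q/\jfr$ (cf.\ Lemma~\ref{lem:hom-epi-localization}(2)). The intervals in $\I$ are pairwise disjoint, so a direct computation using $\wgldim R\leq 1$ gives $\Tor^R_n(R_\q/\jfr,R_{\q'}/\jfr')=0$ for all $n\geq 0$ and distinct intervals, whence $R\to\prod R_\q/\jfr$ is a homological epimorphism. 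For general $\I$, the ring $R_{(\I)}$ is a filtered union of subrings $\prod_{\ell=0}^n R_{\n_\ell}/\ifr_\ell$ arising from finite partitions as in Construction~\ref{constr:non-semilocal-case}, each of which is a homological epimorphism of $R$ by the finite case applied to the admissible intervals $[\ifr_\ell,\n_\ell]$. Since $\otimes_R$ and $\Tor_1^R$ commute with filtered colimits and higher Tor groups vanish, $f_{(\I)}$ is a homological epimorphism.

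For (c), Lemma~\ref{lem:embed-S} identifies $S$ with a subring of $R_{\I(f)}$. The inclusion $R_{(\I(f))}\subseteq S$ is immediate: Proposition~\ref{prop:hom-epi-vd}(2) identifies each finite product $\prod R_{\n_\ell}/\ifr_\ell$ with an appropriate semilocalization quotient of $S$. The reverse inclusion $S\subseteq R_{(\I(f))}$ is exactly the point of Proposition~\ref{prop:loc-const} combined with Lemma~\ref{lem:gluing}: given $s\in S$ with a Mazet presentation over $R$ (Lemma~\ref{lem:mazet-VD}), Proposition~\ref{prop:loc-const} produces finitely many covering intervals $[\p_j,\p'_j]$ and matching elements $r_j\in R_{\p'_j}/\p_j$ realizing $s$ locally. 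Where the intervals overlap, Lemma~\ref{lem:gluing} glues the $r_j$ into a global element of some $R_{(\I_0,\ldots,\I_n)}\subseteq R_{(\I(f))}$; where they are separated by gaps in $\Spec R$, the product decomposition of $R_{(\I_0,\ldots,\I_n)}$ accommodates them directly. Compatibility at overlaps is automatic since all the $r_j$ share a common Mazet presentation over~$R$.

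The main obstacle is exhaustiveness in (b), i.e., ruling out ``phantom'' maximal ideals of $R_{(\I)}$ not coming from elements of $\I$. The easy half is direct: each $[\jfr,\q]\in\I$ gives a surjective projection $R_{(\I)}\to R_\q/\jfr$ (any partition placing $[\jfr,\q]$ in some part $\I_i$ yields the surjection $R_{\n_i}/\ifr_i\twoheadrightarrow R_\q/\jfr$), and hence a maximal ideal producing exactly $[\jfr,\q]$ in $\I(f_{(\I)})$. For the hard half, given any $\n\in\Max R_{(\I)}$, Proposition~\ref{prop:hom-epi-vd}(2) produces an admissible interval $[\jfr',\q']$ with $(R_{(\I)})_\n\cong R_{\q'}/\jfr'$; one must track $\n$ through refinements of finite partitions. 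At each stage $\n$ restricts to a maximal ideal of $\prod_\ell R_{\n_\ell}/\ifr_\ell$, which picks out a distinguished factor and hence a distinguished part $\I_j$; refining narrows $\I_j$, and (C1)--(C3) force the intersection of these distinguished parts to shrink to a single interval of $\I$. Alternatively, once (c) is in hand, the intrinsic characterization of $R_{(\I)}\subseteq R_\I$ via the Mazet-presentation machinery of (c) forces $\I(f_{(\I)})=\I$ by showing that distinct subchains satisfying (C1)--(C3) produce distinct subrings of $R_\I$.
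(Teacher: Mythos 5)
Your decomposition into (a) $f_{(\I)}$ is a homological epimorphism, (b) $\I(f_{(\I)})=\I$, and (c) $f_{(\I(f))}\sim f$ is exactly the right skeleton, and (a) matches the paper. However, you have correctly identified (b) as the crux and then not proved it. Your first suggestion (tracking a maximal ideal $\n$ of $R_{(\I)}$ through the finite stages and arguing that the distinguished blocks $\I_j$ shrink to a single interval) is only a sketch: it is not clear why the intersection of the distinguished blocks must be nonempty, and making this precise requires essentially the same work as the paper does. Your alternative --- that (c) plus the Mazet machinery forces $\I(f_{(\I)})=\I$ because ``distinct subchains produce distinct subrings'' --- is circular: proving that $\I\mapsto f_{(\I)}$ is injective amounts to recovering $\I$ from the $R$-algebra $R_{(\I)}$, and the only candidate recovery map is $\I(\cdot)$ itself. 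Having $h\circ g=\mathrm{id}$ (your (c)) together with well-definedness of both maps gives $g$ surjective and $h$ injective onto its range, but one still needs $g\circ h=\mathrm{id}$ (your (b)) to conclude the bijection, so (b) cannot be bypassed.

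The paper proves (b) directly and it is genuinely delicate. Given $[\ifr,\p]\in\I(f_{(\I)})$, one first shows there is some $[\jfr_0,\q_0]\in\I$ overlapping it (otherwise a two-block partition of $\I$ would annihilate $R_\p/\ifr$), and then, crucially, uses that $R_\p/\ifr\to R_{(\I)}\otimes_R R_\p/\ifr$ is an isomorphism to rule out the remaining configurations: if more than one interval of $\I$ overlapped $[\ifr,\p]$, condition (C3) would produce a nontrivial idempotent in a local ring; if a unique overlapping interval did not actually contain $[\ifr,\p]$, an explicit two-block partition computation shows $R_{(\I)}\otimes_R R_\p/\ifr\not\cong R_\p/\ifr$. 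Your proposed proof of (c) via Proposition~\ref{prop:loc-const} and Lemma~\ref{lem:gluing} is an interesting variant on the paper (which instead derives (c) quickly from (b) by localizing the canonical map $R_{(\I(f))}\to S$ at maximal ideals); it is plausible, but you would need to verify that the covering intervals $[\p_j,\p'_j]$ can be turned into a legitimate partition of $\I(f)$ satisfying conditions (a) and (b) of Construction~\ref{constr:non-semilocal-case}, which requires invoking (C1) and (C2) to show each block has a minimum and maximum in $\I(f)$. None of this, however, substitutes for an actual proof of (b).
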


\begin{proof}
Suppose we have $\I \ne \emptyset$ as in $(1)$ and consider $f_{(\I)}\dd R \to R_{(\I)}$. Since $f_{(\I)}$ is a direct limit of homological epimorphisms of the form
\begin{equation} \label{eqn:finite-tensor}
R \la \prod_{\ell = 0}^n R_{\n_\ell}/\ifr_\ell, \qquad
\ifr_0 \subseteq \n_0 \subsetneqq \ifr_1 \subseteq \n_1 \subsetneqq \cdots \subsetneqq \n_n \subseteq \ifr_n,
\end{equation}
and since the Tor functors commute with direct limits, it follows that $f_{(\I)}$ is a homological epimorphism.

Suppose now that $\I$ is a set of admissible intervals as in (i) and let $\I' = \I(f_{(\I)})$. We claim that $\I' = \I$. To this end, let $[\ifr,\p] \in \I'$. As then $R_{(\I)} \otimes_R R_\p/\ifr \ne 0$ by the very definition of $\I'$, we deduce that there is an interval $[\jfr_0,\q_0] \in \I$ which overlaps $[\ifr,\p]$. Indeed, otherwise we could take $\I_0 = \{ [\jfr,\q] \in \I \mid [\jfr,\q] < [\ifr,\p] \}$, $\I_1 = \{ [\jfr,\q] \in \I \mid [\jfr,\q] > [\ifr,\p] \}$ and $R_{\n_0}/\ifr_0 \times R_{\n_1}/\ifr_1$ as in Construction~\ref{constr:non-semilocal-case} (using conditions (C1) and (C2) on $\I$), but then $(R_{\n_0}/\ifr_0 \times R_{\n_1}/\ifr_1) \otimes_R R_\p/\ifr = 0$, so $R_{(\I)} \otimes_R R_\p/\ifr = 0$, a contradiction. Note further that since $R_\p/\ifr$ is isomorphic to a localization of $R_{(\I)}$ at a maximal ideal as an $R$-algebra, the obvious morphism $R_\p/\ifr \to R_{(\I)} \otimes_R R_\p/\ifr$ is an isomorphism. This implies that $[\jfr_0,\q_0]$ contains $[\ifr,\p]$. Indeed, otherwise we would encounter one of the following two cases:

\begin{enumerate}
\item There is another interval in $\I$ overlapping $[\ifr,\p]$. Then $R_{(\I)} \otimes_R R_\p/\ifr$ would contain a nontrivial idempotent by Construction~\ref{constr:non-semilocal-case}, using condition (C3). This is a contradiction to $R_{(\I)} \otimes_R R_\p/\ifr \cong R_\p/\ifr$ being local.

\item The interval $[\jfr_0,\q_0]$ is the only interval overlapping $[\ifr,\p]$ and either $\jfr_0 \subseteq \ifr \subseteq \q_0 \subsetneqq \p$ or $\ifr \subsetneqq \jfr_0 \subseteq \p \subseteq \q_0$ or $\ifr \subsetneqq \jfr_0 \subseteq \q_0 \subsetneqq \p$. In the first case we can take $\I_0 = \{ [\jfr,\q] \in \I \mid [\jfr,\q] \le [\jfr_0,\q_0] \}$, $\I_1 = \{ [\jfr,\q] \in \I \mid [\jfr,\q] > [\ifr,\p] \}$ and $R_{\q_0}/\ifr_0 \times R_{\n_1}/\ifr_1$ as in Construction~\ref{constr:non-semilocal-case} (using condition (C1) to show that $\I_1$ has a minimum). Then $(R_{\q_0}/\ifr_0 \times R_{\n_1}/\ifr_1) \otimes_R R_\p/\ifr \cong R_{\q_0}/\ifr$ and also $R_{(\I)} \otimes_R R_\p/\ifr \cong R_{\q_0}/\ifr \not\cong R_\p/\ifr$, a contradiction. The other two cases lead to similar contradictions.
\end{enumerate}
To summarize, we know so far that each $[\ifr,\p] \in \I'$ is contained in a unique $[\jfr_0,\q_0] \in \I$.

Suppose conversely that we start with $[\jfr_0,\q_0] \in \I$. By the construction of $R_{(\I)}$ we have $R_{(\I)} \otimes_R R_{\q_0}/\jfr_0 \cong R_{\q_0}/\jfr_0$ as $R$-algebras since all terms in the defining direct system have this property. Thus $(R_{(\I)}/\jfr_0 R_{(\I)})_{\q_0}$ is local and there is a unique prime ideal $\n \in \Spec R_{(\I)}$ containing $\jfr_0 R_{(\I)}$ such that the localization of $R_{(\I)}/\jfr_0 R_{(\I)}$ at $\n$ is isomorphic to $R_{\q_0}/\jfr_0$ as $R$-algebra. Thus, invoking Lemma~\ref{lem:spectrum-epi} for $f = f_{(\I)}$, there is a unique interval $[\ifr,\p] \in \I' = \I(f_{(\I)})$ which contains $[\jfr_0,\q_0]$. This establishes the claim $\I' = \I$.

We have shown so far that the assignments $\I \mapsto f_{(\I)}$ and $f \to \I(f)$ are well defined maps between the appropriate sets (recall Proposition~\ref{prop:no-gaps-hom-epi} and Corollary~\ref{cor:disconnected}), and that the composition $\I \mapsto f_{(\I)} \mapsto \I(f_{(\I)})$ is the identity on chains of admissible intervals. In order to prove the theorem, it suffices to show that $\I \mapsto f_{(\I)}$ is a surjective assignment.

Thus suppose that we have $f\dd R \to S$ with $\I = \I(f)$. It is easy to check that $f$ uniquely factors through any morphism $R \to \prod_{\ell = 0}^n R_{\n_\ell}/\ifr_\ell$ in the direct system for  $f_{(\I)}\dd R \to R_{(\I)}$ in Construction~\ref{constr:non-semilocal-case}. One can see that  for instance by Proposition~\ref{prop:hom-epi-semihered}, using the fact that the canonical homomorphism $S \cong S \otimes_R \prod_{\ell = 0}^n R_{\n_\ell}/\ifr_\ell$ is bijective, which can be checked by localizing at maximal ideals of $S$. In particular we have a canonical morphism $g\dd R_{(\I)} \to S$. Since $\I(f) = \I(f_{(\I)})$, the map $(R_{(\I)})_\n \to S_\n$ is an isomorphism for each $\n \in \Max R_{(\I)}$. Thus $g$ is an isomorphism and we are done.
\end{proof}

\begin{expl} \label{expl:Puiseux}
Let $R$ be the ring which is called $A$ in~\cite[\S2]{Kel94-smash}. The same ring can be obtained by invoking~\cite[Theorem II.3.8]{FS01} for the totally ordered group $(\bbZ[1/\ell],+)$. Thus, $R$ is a valuation domain, $\Spec R = \{0,\m\}$ by~\cite[Proposition II.3.4]{FS01}, and $\m^2 = \m$. Let $Q$ be the quotient field of $R$ and $k = R/\m$ be the residue field. Our theorem says that we have precisely $5$ distinct homological epimorphisms starting at $R$: $R \to 0$, $R \to Q$, $R \to R$, $R \to k$ and $R \to Q \times k$, and only the first three are flat.
\end{expl}

\section{Flat epimorphisms}
\label{sec:flat}

Now we will turn back to general commutative rings of weak global dimension $\le 1$. Our aim is to understand flat ring epimorphisms in this case. As it turns out, they precisely correspond to compactly generated Bousfield localizations, but the proof seems rather non-trivial.

Let $R$ be commutative non-degenerate ring (i.e.\ $R\neq 0$), let $\wgldim R \le 1$ and let $f\dd R\to S$ be a flat ring epimorphism. Recall that by Lemma~\ref{lem:locally-VD}, $R_{\p}$ is a valuation domain for every prime ideal $\p$ of $R$. Thus, for every maximal ideal $\m$ of $R$, $f \otimes_R R_{\m}\dd R_{\m}\to S_{\m}$ is a flat epimorphism of the valuation domain $R_{\m}$. Hence by Proposition~\ref{prop:flat-epi-VD} there is a prime ideal $s(\m)$ of $R$ such that $s(\m)\subseteq \m$ and $f \otimes_R R_{\m}$ is equivalent to the localization $R_{\m}\to R_{s(\m)}$. Since localizations at all maximal ideals jointly reflect exactness, the map $f$ is necessarily injective.

We record the above notation for future reference.

\begin{nota} \label{nota:flat-epi-wgldim1}
If $0 \neq R$ is a ring with $\wgldim R \le 1$, $f\dd R\to S$ is a flat ring epimorphism and $\m \in \Max R$, we denote by $s(\m)$ the unique prime ideal of $R$ such that $s(\m)\subseteq \m$ and $f \otimes_R R_{\m}$ is equivalent to $R_{\m}\to R_{s(\m)}$.
\end{nota}

\begin{lem}\label{lem:non-hom-support}
In the situation of Notation~\ref{nota:flat-epi-wgldim1}, we have:
\[\{\q\in \Spec R\mid \q S=S\}= \{\q \in \Spec R\mid s(\m)\subsetneqq \q\subseteq  \m, \forall\m\in \Max R, \m\supseteq \q\}.\]
\end{lem}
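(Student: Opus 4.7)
The plan is to reduce the equation $\q S = S$ to a condition at each maximal ideal of $R$, and then to unwind the localized condition via the explicit description $S_\m \cong R_{s(\m)}$ supplied by Notation~\ref{nota:flat-epi-wgldim1}.

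First I would use the well-known local-global principle for ideals: for any ideal $\q$ and any $R$-module $M$ one has $\q M = M$ if and only if $\q M_\m = M_\m$ for every $\m \in \Max R$. Applied to $M = S$, this reduces the problem to checking whether $\q S_\m = S_\m$ for each maximal ideal $\m$. If $\m$ does not contain $\q$, then $\q$ contains an element $x \notin \m$, which is a unit in $R_\m$ and hence also in $S_\m$, so $\q S_\m = S_\m$ automatically. Consequently, the equality $\q S = S$ is equivalent to $\q S_\m = S_\m$ holding for every maximal ideal $\m \supseteq \q$.

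Next I would fix such an $\m$ and invoke Notation~\ref{nota:flat-epi-wgldim1}: the map $f \otimes_R R_\m\dd R_\m \to S_\m$ is equivalent, as an $R_\m$-algebra, to the localization $R_\m \to R_{s(\m)}$. Thus $\q S_\m = S_\m$ is equivalent to $\q R_{s(\m)} = R_{s(\m)}$, which in turn holds exactly when some element of $\q$ lies outside $s(\m)$, i.e.\ when $\q \not\subseteq s(\m)$. Finally, $R_\m$ is a valuation domain by Lemma~\ref{lem:locally-VD}, so its prime ideals form a chain; since $\q$ and $s(\m)$ both sit inside $\m$, they are comparable, and $\q \not\subseteq s(\m)$ translates to $s(\m) \subsetneqq \q$, which is precisely the condition appearing on the right-hand side of the claimed equality.

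I do not expect any genuine obstacle: the statement is essentially an unwinding of $S_\m \cong R_{s(\m)}$ combined with the chain property of $\Spec R_\m$. The only point requiring a little care is passing from the negative statement $\q \not\subseteq s(\m)$ to the strict inclusion $s(\m) \subsetneqq \q$, which is exactly where the valuation domain structure of $R_\m$ is used.
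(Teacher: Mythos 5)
Your proposal is correct and follows essentially the same route as the paper: localize at every maximal ideal, use the identification $S_\m \cong R_{s(\m)}$ from Notation~\ref{nota:flat-epi-wgldim1}, and then exploit the fact that primes below a fixed $\m$ are totally ordered (since $R_\m$ is a valuation domain) to convert $\q \not\subseteq s(\m)$ into the strict inclusion $s(\m) \subsetneqq \q$. The paper phrases the converse direction as a short proof by contradiction rather than by a direct equivalence, but the mathematical content is identical.
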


\begin{proof}
For every prime ideal $\p\in \Spec R$ we have an exact sequence
\[0\la \p\otimes_R S\la R\otimes_RS\la R/\p\otimes_RS\la 0,\]
thus we may identify $\p\otimes_RS$ with the $S$-ideal $\p S$.

Let $\q\in \Spec R$ be such that $\q S=S$. Then, for every maximal ideal $\m$ of $R$, $\q S_{\m}=S_{\m}\cong R_{s(\m)}$, hence $\q\nsubseteq s(\m)$. Thus, if $\m \supseteq \q$ we must have $s(\m)\subsetneqq \q$ since all primes below $\m$ are totally ordered by inclusion.

Conversely,  let $\q\in \Spec R$ be such that $s(\m)\subsetneqq \q$ for every maximal ideal $\m$ of $R$ containing $\q$. Assume, by way of contradiction that $\q S\subsetneqq S$. Then there is a maximal ideal $\m$ of $R$ such that $\q S_{\m}\subsetneqq S_{\m}$. Thus, $\q R_{s(\m)}\subsetneqq R_{s(\m)}$, giving $\q\subseteq s(\m)\subseteq \m$, a contradiction.
\end{proof}

We aim to prove that every flat epimorphism $f\dd R \to S$ as above is given by a compactly generated localization of $\Der R$. The key role is played by Thomason's localization theory~\cite{Th97} which classifies compactly generated localizations purely in terms of $\Spec R$ as a topological space. Let us recall the fundamentals.

\begin{defn} \label{def:Thomason}
Let $R$ be a commutative ring. For $X \in \Der R$ we define its \emph{cohomological support} as
\[ \Supp X = \{ \p \in \Spec R \mid X \otimes_R R_\p \ne 0 \}. \]
For a class of complexes $\X$, we define $\Supp\X = \bigcup_{X \in \X} \Supp X$.

A subset $Z \subseteq \Spec R$ is a \emph{Thomason set} if it can be expressed as a union $Z = \bigcup Z_i$ with each $Z_i$ Zariski closed and \st $\Spec R \setminus Z_i$ is quasi-compact. In other words, we have $Z_i = \{ \p \in \Spec R \mid \p \supseteq I_i \}$ for a finitely generated ideal $I_i \subseteq R$.
\end{defn}

\begin{prop} \label{prop:Thomason-loc}
Let $R$ be a commutative ring. Then there is a bijection between
\begin{enumerate}
\item Thomason sets $Z \subseteq \Spec R$, and
\item compactly generated localizing subcategories $\X \subseteq \Der R$.
\end{enumerate}
given by the assignment $\X \mapsto Z = \Supp\X$.
\end{prop}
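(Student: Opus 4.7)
The proposition is the classical Hopkins--Neeman--Thomason classification theorem, and my plan is to obtain it by composing two well-known bijections.

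First, I invoke Neeman's theorem \cite{Nee92-loc} (extended to non-noetherian rings by Thomason) that compactly generated localizing subcategories $\X \subseteq \Der R$ are in bijection with thick subcategories $\clT \subseteq \Der R^c$ of the subcategory of compact objects. The bijection sends $\X$ to $\X^c := \X \cap \Der R^c$, and conversely $\clT$ to $\Loc \clT$. Recall that $\Der R^c$ is precisely the category of perfect complexes of $R$-modules.

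Second, I apply Thomason's classification \cite[Theorem 3.15]{Th97} of thick subcategories of $\Der R^c$: these are in bijection with Thomason subsets of $\Spec R$, via $\clT \mapsto \bigcup_{P \in \clT} \Supp P$. The inverse sends a Thomason set $Z = \bigcup_i V(I_i)$, with each $I_i$ finitely generated, to the thick subcategory consisting of all perfect complexes with support contained in $Z$; equivalently, it is the thick closure of the collection of Koszul complexes $\{K(I_i)\}$. That the map is well defined on thick subcategories uses the fact that the support of a perfect complex $P$ equals $V(\mathrm{ann}\, H^*(P))$, which is closed with quasi-compact complement since the annihilator contains a finitely generated ideal with the same radical.

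It remains to check that, under the composition of these two bijections, the Thomason set attached to a compactly generated $\X$ really is $\Supp \X$ and not merely $\Supp \X^c$. This is the one computational step: writing $\X = \Loc \mathcal{G}$ for some set $\mathcal{G} \subseteq \X^c$ of compact generators, one has $\Supp \X^c = \bigcup_{G \in \mathcal{G}} \Supp G$ trivially, and the reverse inclusion $\Supp \X \subseteq \Supp \X^c$ follows because $-\otimes_R R_\p$ is an exact coproduct-preserving triangle functor, so the full subcategory $\{X \in \Der R \mid X \otimes_R R_\p = 0\}$ is localizing; hence if every $G \in \mathcal{G}$ is acyclic at $\p$ then so is every object of $\Loc\mathcal{G} = \X$. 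I expect no serious obstacle beyond this bookkeeping, as the substantive content is already packaged in the two cited theorems.
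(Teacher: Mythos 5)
Your proposal is correct and follows essentially the same route as the paper: compose Thomason's classification of thick subcategories of perfect complexes \cite[Theorem 3.15]{Th97} with Neeman's bijection between thick subcategories of compacts and compactly generated localizations \cite[Theorem 2.1]{Nee92-loc}, then verify $\Supp\X = \Supp\X^c$ by noting that $\Ker(-\otimes_R R_\p)$ is localizing. The paper phrases the last step as "closing $\C$ under coproducts, mapping cones and (de)suspensions cannot enlarge the support," which is the same observation.
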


\begin{proof}
We combine two results from the literature. Firstly~\cite[Theorem 3.15]{Th97} provides us with a similar bijection between Thomason sets and thick subcategories (i.e.\ triangulated and closed under summands) of the category of perfect complexes. In particular, if $Z$ is a Thomason set and $\C$ is any set of perfect complexes \st $Z = \Supp\C$, then the smallest thick subcategory of $\Der R$ containing $\C$ is
\[ \C' = \{ X \in \Der R \mid X \emph{ compact and } \Supp X \subseteq Z \}. \]
Secondly, \cite[Theorem 2.1]{Nee92-loc} establishes a bijection between thick subcategories of the category of perfect complexes and compactly generated Bousfield localizations of $\Der R$ (cp.\ Definition~\ref{def:comp-gen-loc}). Starting with $\C$ as before, the localizing class $\X$ will be the smallest localizing class containing $\C$. Clearly $\Supp\C \subseteq \Supp\X$ and one easily sees also $\Supp\C \supseteq \Supp\X$ as closing $\C$ under coproducts, mapping cones and (de)suspensions cannot enlarge the support.
\end{proof}

\begin{rem} \label{rem:Thomason-loc}
Note that in the above correspondence, we only have proved $\X \subseteq \{ X \in \Der R \mid \Supp X \subseteq Z \}$, where $Z = \Supp\X$. We do not know whether these classes are equal in general, although they are in various cases. If $R$ is commutative noetherian, the equality essentially follows from~\cite[Lemma 3.6]{Nee92}. If $Z$ is Zariski closed with quasi-compact complement, the equality holds by~\cite[Theorem 2.2.4]{KoPi13}. As we will show below, the equality also holds whenever $\wgldim R \le 1$.
\end{rem}

To this end, we will need an auxiliary lemma which tells us how the support theory behaves \wrt localization.

\begin{lem} \label{lem:Thomason-base-chg}
Let $R$ be a commutative ring \st $\wgldim R \le 1$, $\mathfrak{S} \subseteq R$ be a multiplicative subset, and $\ell\dd R \to R_\mathfrak{S}$ be the localization morphism. Suppose that $\C \subseteq \Der R$ is a set of perfect complexes and $f\dd R \to S$ is a homological epimorphism corresponding to the Bousfield localization compactly generated by $\C$ (see Theorem~\ref{thm:smashing-classif} and~\S\ref{subsec:TTF-triples}).
Then $f \otimes_R R_\mathfrak{S}\dd R_\mathfrak{S} \to S_\mathfrak{S}$ corresponds to the Bousfield localization of $\Der{R_\mathfrak{S}}$ compactly generated by $\C_\mathfrak{S} = \{ C \Lotimes_R R_\mathfrak{S} \mid C \in \C \}$.
\end{lem}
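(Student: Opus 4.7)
The plan is to verify two things: first, that $f\otimes_R R_\mathfrak{S}\dd R_\mathfrak{S}\to S_\mathfrak{S}$ is indeed a homological epimorphism, so that Theorem~\ref{thm:smashing-classif} applies on the $R_\mathfrak{S}$ side; second, that the smashing localizing class it determines coincides with $\Loc\C_\mathfrak{S}$. For the first point, one computes $S_\mathfrak{S}\otimes_{R_\mathfrak{S}} S_\mathfrak{S}\cong S\otimes_R S\otimes_R R_\mathfrak{S}\cong S\otimes_R R_\mathfrak{S}=S_\mathfrak{S}$ and, using flatness of $R_\mathfrak{S}$ over $R$, $\Tor_i^{R_\mathfrak{S}}(S_\mathfrak{S},S_\mathfrak{S})\cong \Tor_i^R(S,S)\otimes_R R_\mathfrak{S}=0$ for $i\ge 1$. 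Noting also that each $C\Lotimes_R R_\mathfrak{S}$ is perfect in $\Der{R_\mathfrak{S}}$, the set $\C_\mathfrak{S}$ does give a compactly generated Bousfield localization.

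Write $\X'=\{X\in\Der{R_\mathfrak{S}}\mid X\Lotimes_{R_\mathfrak{S}} S_\mathfrak{S}=0\}$ for the smashing class corresponding to $f\otimes_R R_\mathfrak{S}$ in the sense of Theorem~\ref{thm:smashing-classif}. The inclusion $\Loc\C_\mathfrak{S}\subseteq \X'$ is the easy half: by hypothesis $\Loc\C=\X:=\{X\in\Der R\mid X\Lotimes_R S=0\}$, so every $C\in\C$ satisfies $C\Lotimes_R S=0$, hence
\[
(C\Lotimes_R R_\mathfrak{S})\Lotimes_{R_\mathfrak{S}} S_\mathfrak{S}\cong C\Lotimes_R S_\mathfrak{S}\cong C\Lotimes_R S\Lotimes_R R_\mathfrak{S}=0,
\]
so $\C_\mathfrak{S}\subseteq\X'$, and $\X'$ is localizing.

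The main step is the reverse inclusion $\X'\subseteq\Loc\C_\mathfrak{S}$. Here I would use the fact that $\ell$ is itself a (flat) homological epimorphism, so that the restriction functor $\ell_*\dd\Der{R_\mathfrak{S}}\to\Der R$ is fully faithful with essential image the complexes whose cohomology consists of $R_\mathfrak{S}$-modules (Proposition~\ref{prop:char-hom-epi}), and the counit $\ell^*\ell_*\to\mathrm{id}$ is an isomorphism, where $\ell^*=-\Lotimes_R R_\mathfrak{S}$. Given $Y\in\X'$, view it in $\Der R$ through $\ell_*$. Since $\ell_*Y$ is already in the essential image of $\ell_*$, we have $\ell_*Y\Lotimes_R R_\mathfrak{S}\cong Y$, and therefore
\[
\ell_*Y\Lotimes_R S\cong \ell_*Y\Lotimes_R R_\mathfrak{S}\Lotimes_{R_\mathfrak{S}} S_\mathfrak{S}\cong Y\Lotimes_{R_\mathfrak{S}} S_\mathfrak{S}=0,
\]
so $\ell_*Y\in\X=\Loc\C$. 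Now $\ell^*$ preserves coproducts and triangles, so the class $\{X\in\Der R\mid \ell^*X\in\Loc\C_\mathfrak{S}\}$ is a localizing subcategory of $\Der R$ containing $\C$, hence contains $\Loc\C$. Applying this to $\ell_*Y$ and using $\ell^*\ell_*Y\cong Y$ yields $Y\in\Loc\C_\mathfrak{S}$, as required.

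The only real obstacle is the last argument, namely the identification $\ell_*Y\Lotimes_R R_\mathfrak{S}\cong Y$; but this is precisely the fact that $\ell$ is a homological epimorphism combined with Proposition~\ref{prop:char-hom-epi}(2), so nothing new is needed. Everything else is a straightforward combination of Theorem~\ref{thm:smashing-classif}, flatness of $R_\mathfrak{S}$, and the standard behaviour of localizing subcategories under a coproduct- and triangle-preserving functor.
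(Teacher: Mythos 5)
Your proof is correct, and it takes a genuinely different route from the paper's. The paper's official argument is to observe that $\Supp \C_\mathfrak{S} = (\Spec\ell)^{-1}(\Supp\C)$ and cite a base-change result for compactly generated localizations from~\cite{Ste12}; the alternative ``pedestrian'' argument in the paper adjoins the perfect complexes $\C' = \{R \stackrel{r}{\to} R \mid r \in \mathfrak{S}\}$ and exploits that the composite localization $R \to S \to S_{f(\mathfrak{S})}$ can also be factored as $R \to R_\mathfrak{S} \to S_{f(\mathfrak{S})}$. You instead verify directly, by two inclusions, that the smashing class $\X' = \{Y \in \Der{R_\mathfrak{S}} \mid Y \Lotimes_{R_\mathfrak{S}} S_\mathfrak{S} = 0\}$ attached to $f \otimes_R R_\mathfrak{S}$ equals $\Loc \C_\mathfrak{S}$. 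The easy inclusion is base change along the flat ring map $\ell$; the reverse inclusion uses that $\ell_*$ is fully faithful and that $\ell^* \ell_* \cong \mathrm{id}$, transporting a given $Y \in \X'$ down to $\ell_* Y \in \Loc \C$ in $\Der R$ and then applying the coproduct-preserving triangulated functor $\ell^*$. This is more self-contained and avoids invoking the topology of $\Spec$; the paper's approach is shorter at the cost of extra external input. One small remark: the displayed isomorphism $\ell_* Y \Lotimes_R S \cong \ell_* Y \Lotimes_R R_\mathfrak{S} \Lotimes_{R_\mathfrak{S}} S_\mathfrak{S}$ is the derived projection formula $\ell_* Y \Lotimes_R N \cong \ell_*(Y \Lotimes_{R_\mathfrak{S}} \ell^* N)$, which holds for any ring map and is what you actually need there; attributing it solely to $\ell$ being a homological epimorphism plus Proposition~\ref{prop:char-hom-epi}(2) is slightly imprecise (that gives you the counit isomorphism $\ell^*\ell_* \cong \mathrm{id}$, which handles the \emph{second} displayed isomorphism), but the step is correct.
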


\begin{proof}
Let $\Spec\ell\dd \Spec R_\mathfrak{S} \to \Spec R$ be the morphism between the spectra induced by $\ell$. 
Clearly, $\Supp \C_\mathfrak{S} = (\Spec\ell)\inv(\Supp\C)$ and the conclusion then follows from~\cite[Proposition 2.6]{Ste14}.

In more pedestrian terms, consider the set $\C'$ of perfect complexes of the form $R \stackrel{r}\to R$ which are concentrated in degrees $-1$ and $0$ and with $r \in \mathfrak{S}$. Then $\C' \otimes_R S = \{ S \stackrel{f(r)}\to S \} \subseteq \Der S$ compactly generates the localization of $\Der S$ whose corresponding homological epimorphism is the ordinary localization $S \to S_{f(\mathfrak{S})}$ \wrt the multiplicative subset $f(\mathfrak{S}) \subseteq S$. Thus the composition $R \to S \to S_{f(\mathfrak{S})}$ corresponds to the localization of $\Der R$ compactly generated by $\C \cup \C'$. The same composition can be also expressed as $R \to R_\mathfrak{S} \to S_{f(\mathfrak{S})}$, from which we see that $R_\mathfrak{S} \to S_{f(\mathfrak{S})}$ corresponds to the localization of $\Der{R_\mathfrak{S}}$ generated by $\C_\mathfrak{S} = \C \otimes_R R_\mathfrak{S}$.
\end{proof}

Now we can give the promised description of the class of acyclic objects for rings of weak global dimension at most $1$.

\begin{prop} \label{prop:Thomason-loc-wgldim1}
Suppose that $R$ is a commutative ring of $\wgldim R \le 1$ and $\X$ is a compactly generated localizing class in $\Der R$. Let $Z \subseteq \Spec R$ be the corresponding Thomason set and let $f\dd R \to S$ be the induced homological epimorphism. Then $f$ is a flat epimorphism and we have
\[
Z = \{ \q \in \Spec R \mid \q S = S \}
\quad \textrm{and} \quad
\X = \{ X \in \Der R \mid \Supp X \subseteq Z \}. \]
\end{prop}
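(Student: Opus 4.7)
The plan is to proceed in three steps.

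\textbf{Step 1: Flatness of $f$.} Since flatness can be verified by localizing at primes, it suffices to show $f_\p = f \otimes_R R_\p \dd R_\p \to S_\p$ is flat for every $\p \in \Spec R$. By Lemma~\ref{lem:locally-VD}, $R_\p$ is a valuation domain, and by Lemma~\ref{lem:Thomason-base-chg}, $f_\p$ is the homological epimorphism corresponding to the compactly generated Bousfield localization of $\Der{R_\p}$ whose Thomason set $Z_\p$ is the preimage of $Z$ in $\Spec R_\p$. Since finitely generated ideals in $R_\p$ are principal, $Z_\p = \bigcup_j V(r_j)$ for suitable $r_j \in R_\p$, so the loc subcat is compactly generated by the Koszul complexes $K(r_j) = [R_\p \mapr{r_j} R_\p]$ (each has support $V(r_j)$, and the compactly generated localization with a given Thomason support is unique by Proposition~\ref{prop:Thomason-loc}). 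A direct computation shows $\RHom_{R_\p}(K(r_j), Y) = 0$ iff multiplication by $r_j$ is a quasi-isomorphism on $Y$, so $\{K(r_j)\}^\perp$ consists of complexes whose cohomology modules are $R_\p[\Sigma_\p\inv]$-modules, where $\Sigma_\p$ is the multiplicative subset of $R_\p$ generated by the $r_j$. By~\cite[Lemma 4.6]{AKL11}, this perpendicular is the essential image of the flat localization $R_\p \to R_\p[\Sigma_\p\inv]$; the uniqueness in Theorem~\ref{thm:smashing-classif} then gives $S_\p \cong R_\p[\Sigma_\p\inv]$, which is flat over $R_\p$. Hence $S$ is flat over $R$.

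\textbf{Step 2: Description of $Z$.} For $\supseteq$: if $\q S = S$, then $(R_\q/\q) \otimes_R S = R_\q \otimes_R (S/\q S) = 0$; flatness of $S$ together with Theorem~\ref{thm:smashing-classif} places $R_\q/\q$ in $\X$, so $\q \in \Supp(R_\q/\q) \subseteq \Supp\X = Z$. For $\subseteq$: if $\q \in Z$, pick a perfect complex $C$ from a compact generating set of $\X$ with $\q \in \Supp C$, and fix any maximal $\m \supseteq \q$. Localizing $C \otimes_R S = 0$ at $\m$ and using $S_\m = R_{s(\m)}$ (Notation~\ref{nota:flat-epi-wgldim1}) yields $C_{s(\m)} = 0$. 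The primes below $\m$ are totally ordered in the valuation domain $R_\m$, so either $\q \subseteq s(\m)$ (which forces $C_\q = (C_{s(\m)})_\q = 0$, contradicting $\q \in \Supp C$) or $s(\m) \subsetneq \q$. Thus $s(\m) \subsetneq \q$ for every maximal $\m \supseteq \q$, and Lemma~\ref{lem:non-hom-support} gives $\q S = S$.

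\textbf{Step 3: Description of $\X$.} The inclusion $\X \subseteq \{X \mid \Supp X \subseteq Z\}$ is Remark~\ref{rem:Thomason-loc}. For the reverse, flatness of $S$ simplifies Theorem~\ref{thm:smashing-classif} to $\X = \{X \mid X \otimes_R S = 0\}$, so I verify $(X \otimes_R S)_\p = X_\p \otimes_{R_\p} S_\p = 0$ for every $\p$. If $\p \notin Z$, then $X_\p = 0$ by hypothesis. If $\p \in Z$, pick any maximal $\m \supseteq \p$; by Step~2, $s(\m) \subsetneq \p$, and a direct computation gives $S_\p = S_\m \otimes_{R_\m} R_\p = R_{s(\m)} \otimes_{R_\m} R_\p = R_{s(\m)}$ (the last equality because $R_{s(\m)}$ is already a further localization of $R_\p$ when $s(\m) \subsetneq \p$). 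Hence $(X \otimes_R S)_\p = X_{s(\m)}$, and $s(\m) \notin Z$ (otherwise Lemma~\ref{lem:non-hom-support} applied to $\q = s(\m)$ would demand $s(\m) \subsetneq s(\m)$, absurd), so $X_{s(\m)} = 0$ by the assumption $\Supp X \subseteq Z$.

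The main obstacle is Step~1: one must explicitly identify the compactly generated Bousfield localization of $\Der{R_\p}$ associated to the Thomason set $Z_\p$ with the flat epimorphism $R_\p \to R_\p[\Sigma_\p\inv]$, which relies crucially on finitely generated ideals being principal in the valuation domain $R_\p$ and on a clean computation of the perpendicular class of the generating Koszul complexes.
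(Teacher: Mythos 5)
Your proof is correct and follows essentially the same local-to-global strategy as the paper's: reduce flatness to valuation domains via localization, use principality of finitely generated ideals there, and glue using Lemma~\ref{lem:Thomason-base-chg}, Lemma~\ref{lem:non-hom-support} and Notation~\ref{nota:flat-epi-wgldim1}. The one variation is in Step~1: you identify the local piece $R_\p \to S_\p$ with a multiplicative localization directly, by decomposing $Z_\p$ into sets $V(r_j)$ and computing the perpendicular of the Koszul complexes $K(r_j)$ by hand, whereas the paper routes through Proposition~\ref{prop:univ-loc-wd1} (universal localizations) and the structure theory of finitely presented modules over valuation domains; since both hinge on the same fact and land in the same place, this is a cosmetic rather than a structural difference.
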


\begin{proof}
Suppose first that $R$ is a valuation domain, hence semihereditary. If $\X$ is generated as a localizing class by compact objects, it is by Proposition~\ref{prop:univ-loc-wd1} and its proof generated by a set of finitely presented $R$-modules (viewed as complexes concentrated in degree $0$). Since every finitely presented module over a valuation domain is a direct sum of modules of the form $R/rR$ for some $r \in R$ (see~\cite[Theorem I.7.9]{FS01}), it follows that every compactly generated Bousfield localization is generated by a set of $2$-term perfect complexes of the form $R \stackrel{r}\to R$. As in the proof of Lemma~\ref{lem:Thomason-base-chg}, such a localization corresponds in terms of homological epimorphisms to an ordinary localization with respect to a multiplicative set. For a valuation domain, such a localization must be of the form $R \to R_\p$ for $\p \in \Spec R$; see the proof of Proposition~\ref{prop:flat-epi-VD}. Hence we have $Z = \Supp\X = \{ \q \in \Spec R \mid r \in \q \textrm{ for some } r \in R \setminus \p \} = \{ \q \in \Spec R \mid \q \supsetneqq \p \} = \{ \q \in \Spec R \mid \q R_\p = R_\p \}$. Moreover, $\X$ is as required by Theorem~\ref{thm:smashing-classif}.

Let now $R$ be general and $\C$ be a set of perfect complexes generating~$\X$. Then the morphism $f \otimes_R R_\m\dd R_\m \to S_\m$ for each $\m \in \Max R$ is by Lemma~\ref{lem:Thomason-base-chg} equivalent to $R_\m \to R_{s(\m)}$ as in Notation~\ref{nota:flat-epi-wgldim1}. In particular each $S_\m$ is flat over $R_\m$ and so $S$ is flat over $R$. Further, $\q \in Z$ \iff $\q \in \Supp\C$ \iff $\q \in \Supp\C_\m$ for each $\m \in \Max R$ \st $\m \supseteq \q$. Applying Lemma~\ref{lem:Thomason-base-chg}, we have  $Z = \{ \q \in \Spec R \mid \q R_{s(\m)} = R_{s(\m)} \; \forall \m \in \Max R, \m \supseteq \q \} = \{ \q \in \Spec R \mid \q S = S \}$.

Finally, by Theorem~\ref{thm:smashing-classif}, Lemma~\ref{lem:non-hom-support} and the above discussion we have
\begin{multline*}
\X = \big\{ X \in \Der R \mid H^n(X) \otimes_R S = 0 \textrm{ for all } n \in \bbZ \big\} = \\
   = \big\{ X \in \Der R \mid H^n(X) \otimes_R R_{s(\m)} = 0 \textrm{ for all } n \in \bbZ \textrm{ and } \m \in \Max R \big\} = \\
   = \{ X \in \Der R \mid \Supp X \subseteq Z \}.
\qedhere
\end{multline*}
\end{proof}

As a consequence, we get the characterization of homological epimorphisms coming from compactly generated Bousfield localizations of $\Der R$.

\begin{thm} \label{thm:flat-epi}
Let $R$ be a commutative ring of weak global dimension at most $1$. Then the correspondence from Theorem~\ref{thm:smashing-classif} restricts to the bijection between
\begin{enumerate}
\item equivalence classes of flat epimorphisms $f\dd R \to S$ originating at $R$, and
\item compactly generated localizing subcategories $\X \subseteq \Der R$.
\end{enumerate}
If, moreover, $R$ is semihereditary, then $f\dd R \to S$ is flat \iff $f$ is a universal localization.
\end{thm}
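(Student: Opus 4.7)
The theorem has two claims: (1)~the bijection of Theorem~\ref{thm:smashing-classif} restricts to a bijection between equivalence classes of flat ring epimorphisms $f\dd R\to S$ and compactly generated localizing subcategories of $\Der R$, and (2)~under the extra hypothesis that $R$ is semihereditary, flat epimorphisms coincide with universal localizations. The plan is to settle~(1) first and then deduce~(2) by composing with Proposition~\ref{prop:univ-loc-wd1}.

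For~(1), the direction ``compactly generated $\Rightarrow$ flat'' is already contained in Proposition~\ref{prop:Thomason-loc-wgldim1}. For the converse, I would take a flat epimorphism $f\dd R\to S$ and attach to it the subset $Z := \{\q\in\Spec R \mid \q S = S\}$. First I would verify that $Z$ is a Thomason set: $\q\in Z$ iff $1\in\q S$ iff some finitely generated subideal $I\subseteq\q$ already satisfies $IS = S$, and for any such $I$ one has $V(I)\subseteq Z$, so $Z$ is a union of closed sets of the form $V(I)$ with $I$ finitely generated. Let $\X'$ be the compactly generated localizing class associated to $Z$ via Proposition~\ref{prop:Thomason-loc}; by Proposition~\ref{prop:Thomason-loc-wgldim1} it admits the description $\X' = \{X\in\Der R \mid \Supp X\subseteq Z\}$.

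The crucial step is then to identify $\X'$ with the smashing class $\X$ attached to $f$ by Theorem~\ref{thm:smashing-classif}. Flatness kills the $\Tor_1$-term, so $\X = \{X\in\Der R \mid H^n(X)\otimes_R S = 0 \text{ for all } n\in\bbZ\}$. Localizing at each $\m\in\Max R$ and using $S_\m = R_{s(\m)}$ (Notation~\ref{nota:flat-epi-wgldim1}, with the convention $s(\m) := R$ when $S_\m = 0$), this rewrites as $\X = \{X \mid X_{s(\m)} = 0 \text{ for every } \m\in\Max R\}$. On the other hand, Lemma~\ref{lem:non-hom-support} identifies $\Spec R\setminus Z$ with the set of primes lying below some $s(\m)$, so $\Supp X\subseteq Z$ iff $X_\q = 0$ for every $\q$ contained in some $s(\m)$; taking $\q = s(\m)$ (and noting conversely that any further localization of zero is zero) reduces this to $X_{s(\m)} = 0$ for every $\m$. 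Therefore $\X = \X'$, and the injectivity of the bijection in Theorem~\ref{thm:smashing-classif} forces $f$ to be equivalent to the flat epimorphism attached to $\X'$; in particular $\X$ is compactly generated.

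Claim~(2) is then obtained by composing the bijection of~(1) with that of Proposition~\ref{prop:univ-loc-wd1}, which under the semihereditary hypothesis matches equivalence classes of universal localizations with compactly generated localizing subcategories of $\Der R$. The main obstacle is the identification $\X = \X'$ in~(1), where one must carefully handle those maximal ideals $\m$ with $S_\m = 0$ (formally absorbed by the convention $s(\m) = R$); once this convention is in place, the rest is a straightforward assembly of the earlier results in the paper.
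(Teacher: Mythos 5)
Your proof is correct and follows essentially the same route as the paper: Proposition~\ref{prop:Thomason-loc-wgldim1} gives the forward direction; for the converse you construct the Thomason set $Z=\{\q\mid\q S=S\}$ and identify the acyclic class $\X$ for $f$ with $\{X\in\Der R\mid\Supp X\subseteq Z\}$ by localizing at maximal ideals and invoking Lemma~\ref{lem:non-hom-support} together with Notation~\ref{nota:flat-epi-wgldim1}, then apply Proposition~\ref{prop:Thomason-loc-wgldim1} once more; and the universal-localization clause is Proposition~\ref{prop:univ-loc-wd1}.

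One cautionary remark concerns your convention $s(\m):=R$ when $S_\m=0$. Applied literally to Lemma~\ref{lem:non-hom-support}, the condition $\q\subseteq s(\m)=R$ is always satisfied, which would wrongly put into $\Spec R\setminus Z$ every prime contained in such an $\m$. A concrete counterexample: take $R=k\times k$, $S=k$ via the first projection, and $\q=\m=k\times 0$; here $S_\m=0$, yet $\m S=S$, so $\m\in Z$, whereas your convention would declare $\m\notin Z$. The correct handling is the \emph{opposite} one: a maximal ideal $\m$ with $S_\m=0$ contributes no constraint and should simply be dropped from the quantifier in Lemma~\ref{lem:non-hom-support} (equivalently, formally let $s(\m)$ lie \emph{below} every prime, not above all of them). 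To be fair, the paper's Notation~\ref{nota:flat-epi-wgldim1} is equally terse on this point, since it invokes Proposition~\ref{prop:flat-epi-VD} for $R_\m\to S_\m$ without excluding $S_\m=0$, a case to which that proposition does not apply; so your instinct to make the edge case explicit is a good one even though the particular convention you chose points in the wrong direction.
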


\begin{rem} \label{rem:semihereditary}
Note that semiheredity is a strictly stronger assumption than $\wgldim R \le 1$, see Example~\ref{expl:non-coherent} or~\cite[Example 3.1.2]{Glaz05}.
\end{rem}

\begin{proof}[Proof of Theorem~\ref{thm:flat-epi}]
If $f$ corresponds to a compactly generated Bousfield localizations, it is flat by Proposition~\ref{prop:Thomason-loc-wgldim1}.

Suppose conversely that $f\dd R \to S$ is a flat ring epimorphism. We claim that $Z = \{ \q \in \Spec R \mid \q S = S \}$ is a Thomason set. Indeed, for any $\q \in Z$ write $1=\sum_{i=1}^n a_is_i$ with $a_i\in \q$ and $s_i\in S$. Then the ideal $I=(a_1, \dots, a_n) \subseteq R$ is such that $IS=S$ and $I\subseteq \q$. Thus, $\q\in \Supp R/I \subseteq Z$ and $\Supp R/I$ is Zariski closed with quasi-compact complement. This proves the claim.

Let now $\X=\Ker (-\otimes_RS)$ be the localizing class corresponding to $S$ (see Theorem~\ref{thm:smashing-classif}). Then, using Notation~\ref{nota:flat-epi-wgldim1} and Lemma~\ref{lem:non-hom-support}, we have
\begin{multline*}
\X = \big\{ X \in \Der R \mid H^n(X) \otimes_R R_{s(\m)} = 0 \textrm{ for all } n \in \bbZ \textrm{ and } \m \in \Max R \big\} = \\
   = \{ X \in \Der R \mid \Supp X \subseteq Z \}.
\end{multline*}
Thus, $S$ describes the compactly generated localization corresponding to $Z$ by Proposition~\ref{prop:Thomason-loc-wgldim1}.

The last part concerning universal localizations follows from Proposition~\ref{prop:univ-loc-wd1}.
\end{proof}

\section{The Telescope Conjecture}
\label{sec:tc}

Finally, we will discuss the Telescope Conjecture for rings of weak global dimension $\le 1$. Although we do not obtain a full classification of smashing localizations as in the case of valuation domains, we are still able to obtain an easy criterion characterizing when the Telescope Conjecture holds for $\Der R$. In particular we will see that this is always the case when $R$ is a commutative von Neumann regular (also known as absolutely flat) ring, generalizing~\cite[Theorem 4.21]{Ste14}.

\begin{defn} \label{def:tc}
Let $\T$ be a triangulated category with coproducts. We say that the \emph{Telescope Conjecture} holds for $\T$ if every smashing localization of $\T$ is a compactly generated localization (see \S\ref{subsec:TTF-triples}).
\end{defn}

In fact, the Telescope Conjecture is a property of $\T$, it holds for some triangulated categories and fails for others. For $\Der R$ with $R$ commutative and $\wgldim R \le 1$, it asks for every homological epimorphism $f\dd R \to S$ to be flat. If $R$ is even semihereditary, it equivalently requires that every homological epimorphism $f\dd R \to S$ be a universal localization (see also~\cite[\S\S6 and 7]{KrSt}). Now we can state the main result of the final section.

\begin{thm} \label{thm:tc}
Let $R$ be a commutative ring of weak global dimension $\le 1$. Then the following are equivalent:
\begin{enumerate}
 \item The Telescope Conjecture holds for $\Der R$;
 \item Every homological epimorphism $f\dd R \to S$ is flat;
 \item There is no $\p \in \Spec R$ \st $\p R_\p$ is a non-zero idempotent ideal in~$R_\p$.
\end{enumerate}
\end{thm}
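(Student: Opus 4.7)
The equivalence (1) $\Leftrightarrow$ (2) is essentially bookkeeping from the earlier classifications. Theorem~\ref{thm:smashing-classif} puts smashing subcategories of $\Der R$ in bijection with equivalence classes of homological epimorphisms $R \to S$, and Theorem~\ref{thm:flat-epi} identifies within this bijection the compactly generated ones as precisely those arising from flat epimorphisms. So the Telescope Conjecture for $\Der R$ is equivalent to every homological ring epimorphism out of $R$ being flat.

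For (2) $\Rightarrow$ (3) I argue by contraposition. Given $\p \in \Spec R$ with $\p R_\p$ a non-zero idempotent ideal, form the composite $R \mapr{\ell} R_\p \mapr{\pi} k(\p) = R_\p/\p R_\p$. Here $\ell$ is a flat epimorphism by Lemma~\ref{lem:hom-epi-localization}(2), and $\pi$ is a homological epimorphism by the final assertion of Lemma~\ref{lem:kernel-idempotent} since its kernel $\p R_\p$ is idempotent. So the composite is a homological epimorphism by Lemma~\ref{lem:hom-epi-localization}(1). It cannot be flat: were $k(\p)$ flat over $R$, then since $k(\p)$ is an $R_\p$-module and $\Tor_i^R$ coincides with $\Tor_i^{R_\p}$ on pairs of $R_\p$-modules (by flat base change along $\ell$), $k(\p)$ would be flat over $R_\p$. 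But for any non-zero $x \in \p R_\p$ the short exact sequence $0 \to R_\p \mapr{\cdot x} R_\p \to R_\p/xR_\p \to 0$ (exact since $R_\p$ is a domain) yields $\Tor_1^{R_\p}(R_\p/xR_\p, k(\p)) \cong k(\p) \ne 0$, because multiplication by $x \in \p R_\p$ acts as zero on $k(\p)$. This contradicts~(2).

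For (3) $\Rightarrow$ (2), let $f\dd R \to S$ be a homological epimorphism; I want to show $f$ is flat. By Lemma~\ref{lem:hom-epi-localization}(3) together with the fact that flatness is a local property, it suffices to show each $f_\p\dd R_\p \to S \otimes_R R_\p$ is flat. Hypothesis (3) is plainly inherited by every $R_\p$ (its primes being the primes of $R$ contained in $\p$), and $R_\p$ is a valuation domain by Lemma~\ref{lem:locally-VD}, so I reduce to the case where $R$ is itself a valuation domain satisfying~(3). By Theorem~\ref{thm:hom-epi-vd} any homological epimorphism from $R$ is encoded by a subchain $\I \subseteq \Inter R$ satisfying (C1)-(C3); Proposition~\ref{prop:flat-epi-VD}, together with the observation that $R \to 0$ (corresponding to $\I = \emptyset$) is trivially flat, identifies the flat epimorphisms with precisely those $\I$ that are either empty or a singleton $\{[0,\q]\}$. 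If, on the other hand, some $[\ifr,\q] \in \I$ has $\ifr \ne 0$---which is forced either when $\I$ is a non-trivial singleton, or when $\I$ has at least two elements $[\ifr,\q] < [\ifr',\q']$ (then $\ifr' \supsetneq \q \supseteq 0$ gives $\ifr' \ne 0$)---then admissibility ensures $\ifr$ is a non-zero idempotent prime, and localizing at $\ifr$ exhibits a non-zero idempotent maximal ideal in $R_\ifr$, contradicting~(3).

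The conceptual work all lies in the reduction to valuation domains; after that, Theorem~\ref{thm:hom-epi-vd} constrains the classifying chains $\I$ so rigidly that the problem reduces to a tally of admissible kernels. The only independent ingredient is the brief Tor calculation certifying non-flatness of $k(\p)$ over $R_\p$ in the presence of a non-zero idempotent $\p R_\p$, which requires only a single short exact sequence.
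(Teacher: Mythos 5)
Your proof is correct and follows essentially the same route as the paper: (1)~$\Leftrightarrow$~(2) via Theorems~\ref{thm:smashing-classif} and~\ref{thm:flat-epi}, (2)~$\Rightarrow$~(3) by exhibiting $R \to R_\p/\p R_\p$ as a non-flat homological epimorphism when $\p R_\p$ is a nonzero idempotent, and (3)~$\Rightarrow$~(2) by localizing at primes (Lemma~\ref{lem:hom-epi-localization}(3)) and invoking the valuation-domain classification of Theorem~\ref{thm:hom-epi-vd}. You spell out a few steps the paper leaves implicit---in particular the Tor computation certifying non-flatness of $k(\p)$ and the case analysis on $\I$---but the strategy is the same.
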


\begin{proof}
(1) $\Leftrightarrow$ (2) follows from Theorem~\ref{thm:flat-epi}. Assuming (2), let $\p \in \Spec R$. If $0 \ne \p R_\p$ is idempotent in $R_\p$, then $R \to R_\p/\p R_\p$ is a non-flat homological epimorphism by Lemma~\ref{lem:kernel-idempotent}, hence (2) $\Rightarrow$ (3). Finally, assume (3) and let $f\dd R \to S$ be a homological epimorphism. Then $f \otimes_S R_\p\dd R_\p \to S_\p$ must be flat for each $\p \in \Spec R$ by Theorem~\ref{thm:hom-epi-vd}. Hence $f$ is flat and (2) follows.
\end{proof}

In particular, we have the following necessary condition.

\begin{cor} \label{cor:acc-primes}
If $R$ is commutative, $\wgldim R \le 1$ and the Telescope Conjecture holds for $\Der R$, then $(\Spec R, \subseteq)$ has the ascending chain condition on prime ideals.
\end{cor}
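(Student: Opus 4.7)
The plan is to argue by contradiction: suppose that $(\Spec R, \subseteq)$ admits a strictly ascending chain of primes $\p_1 \subsetneq \p_2 \subsetneq \p_3 \subsetneq \cdots$ and show that the Telescope Conjecture must fail. By Theorem~\ref{thm:tc}, it suffices to produce a prime $\p \in \Spec R$ such that $\p R_\p$ is a non-zero idempotent ideal of $R_\p$.

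The natural candidate is $\p = \bigcup_{n \ge 1} \p_n$, which is a prime ideal of $R$ as a directed union of primes. Since the chain is strictly ascending, at least one of $\p_1, \p_2$ is non-zero, hence $\p \ne 0$. Next I would pass to the localization: by Lemma~\ref{lem:locally-VD}, $R_\p$ is a valuation domain, and the primes of $R_\p$ correspond to primes of $R$ contained in $\p$. Thus $(\p_n R_\p)_{n \ge 1}$ is a strictly ascending chain of primes of $R_\p$ with no maximal element, and its union is $\p R_\p$ (localization commutes with unions).

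Now I invoke Lemma~\ref{lem:lots-of-idempotents}: in a valuation domain, the union of any set of primes with no maximal element is idempotent. Applied to $\{\p_n R_\p \mid n \ge 1\}$, this gives that $\p R_\p$ is idempotent. Since $\p \ne 0$ and $R_\p$ is local with maximal ideal $\p R_\p$, we conclude that $\p R_\p$ is a non-zero idempotent ideal of $R_\p$, contradicting Theorem~\ref{thm:tc}(3).

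There is no real obstacle here; the only subtlety is verifying that $\p$ is non-zero, which follows immediately from the chain being \emph{strictly} ascending and having length at least two. The argument relies essentially on Lemma~\ref{lem:lots-of-idempotents} (which makes idempotent primes abundant in valuation domains) together with the characterization of the Telescope Conjecture via idempotent maximal ideals of localizations in Theorem~\ref{thm:tc}.
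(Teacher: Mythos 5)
Your proof is correct and follows the same approach as the paper: form the union $\p$ of the ascending chain, pass to the valuation domain $R_\p$ via Lemma~\ref{lem:locally-VD}, apply Lemma~\ref{lem:lots-of-idempotents} to see that $\p R_\p$ is idempotent, and contradict Theorem~\ref{thm:tc}(3). Your version merely spells out a few small details (e.g.\ that $\p \ne 0$ and that the chain of primes in $R_\p$ has no maximal element) that the paper leaves implicit.
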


\begin{proof}
If there is an infinite chain $\p_0 \subseteq \p_1 \subseteq \p_2 \subseteq \cdots$ of primes of $R$, then $\p = \bigcup_i \p_i$ is also a prime ideal which is necessarily idempotent in its localization by Lemmas~\ref{lem:locally-VD} and~\ref{lem:lots-of-idempotents}.
\end{proof}

We end by listing some classes of commutative semihereditary rings $R$ studied in the literature such that $\Der R$ satisfies the telescope conjecture.

\begin{enumerate}
\item Recall that a commutative domain is a \emph{Pr\"ufer domain} if every localization at a maximal (or prime) ideal is a valuation domain. A Pr\"ufer domain is \emph{strongly discrete}~\cite[\S III.7]{FS01} if no non-zero prime ideal is idempotent. Then $\Der R$ satisfies the Telescope Conjecture for a Pr\"ufer domain $R$ \iff $R$ is strongly discrete; see~\cite[Proposition III.7.4]{FS01}. 

\item If $R$ a commutative von Neumann regular ring, then $\Der R$ satisfies the telescope conjecture. This generalizes~\cite[Theorem 4.21]{Ste14}. In fact, $R$ is semihereditary and every localization at a maximal ideal is a field (see e.g. \cite[Corollary 4.2.7]{Glaz89}). Note that in this case every ideal of $R$ is idempotent.
 \end{enumerate}

\appendix

\section{The homotopy categories of dg modules and algebras}
\label{sec:homotopy-dg}

Here we collect some background material on model categories and especially on model structures on the categories of dg modules and dg algebras. Although the presented results are certainly well-known to experts, we rely on precise manipulation with dg algebras and also dg bimodules at certain places and it seems convenient to have the necessary background included for the sake of completeness. The material presented here supplements mostly Sections~\ref{sec:hom-epi-dga} and~\ref{sec:hom-epi-semihered}. 

Although for additive categories like the one of dg modules over a fixed dg algebra the model category machinery can be circumvented to quite some extent by using the standard algebraic calculus of fractions as in~\cite{Kel94}, this does not a priory apply to the category of dg algebras which is \emph{not} an additive category. Formally inverting the class of quasi-isomorphisms of dg algebras is for example important to give a more conceptual definition of a homological epimorphism in Section~\ref{sec:hom-epi-dga}. The language of model categories~\cite{Hir,Hov99} is a very classical language allowing one to manipulate with localizations of categories in a systematic way, and it has the advantage that various constructions which do not make sense in the language of triangulated categories (like general homotopy limits and colimits) have a precise meaning there.

\subsection{Recollections of model categories}
\label{subsec:model-cat}

Starting with a category $\A$ and a class $\we$ of morphisms in $\A$, on can always construct (up to a possible set theoretic difficulty) a universal functor $Q\colon \A \to \A[\we\inv]$ which makes the maps in $\we$ invertible; see~\cite[\S1.1]{GZ67}. Being able to work with $\A[\we\inv]$ efficiently is, however, a much harder task. The classical topologically motivated idea behind Quillen's model structures~\cite{Hir,Hov99} allows one to solve this problem if we manage to find more structure, namely two additional classes of morphisms called fibrations and cofibrations, satisfying certain axioms which we recall below. A key concept in the definition is the following one.

\begin{defn} \label{def:wfs}
A pair $(\clL,\R)$ of classes of morphisms in a category $\A$ is a \emph{\wfs}if
\begin{enumerate}
\item $\clL$ and $\R$ are closed under retracts.

\item For any commutative square given by the solid arrows
\[
\xymatrix{
A \ar[r] \ar[d]_f     & X\phantom{,} \ar[d]^g      \\
B \ar[r] \ar@{.>}[ur] & Y,
}
\]
with $f \in \clL$ and $g \in \R$, there exists a (not necessarily unique) diagonal dotted morphism \st both triangles commute.

\item For every morphism $h\colon X \to Y$ in $\A$, there is a factorization
\[
\xymatrix@R=1em{
X \ar[rr]^h \ar@/_/[dr]_f && Y  \\
& Z \ar@/_/[ur]_g
}
\]
with $f \in \clL$ and $g \in \R$. 
\end{enumerate}
\end{defn}

\begin{rem} \label{rem:wfs-abelian}
If $\A$ is an abelian category, there is an efficient way to construct \wfss from so-called cotorsion pairs. This is in fact a useful technique to verify axioms of a model category for the categories of complexes and dg modules discussed below. We refer the reader to~\cite{Hov02,G3,Hov07,Beck14,St14} for details and many more further references.
\end{rem}

\begin{defn}[{\cite[\S1.1]{Hov99} or~\cite[\S7.1]{Hir}}] \label{def:models-str}
A \emph{model structure} on a category $\A$ is a triple of classes of morphisms $(\cof,\we,\fib)$ \st
\begin{enumerate}
\item $(\cof,\we\cap\fib)$ and $(\cof\cap\we,\fib)$ are weak factorization systems, and
\item $\we$ is closed under retracts and satisfies the 2-out-of-3 property for compositions (i.e.\ if $f$, $g$ are composable morphisms in $\A$ and two of $f$, $g$, $gf$ are in $\we$, so is the third).
\end{enumerate}

A model category is a bicomplete category $\A$ together with a chosen model structure $(\cof,\we,\fib)$.
\end{defn}

Regarding the terminology, the morphisms in the classes $\cof,\we,\fib$ are called \emph{cofibrations}, \emph{weak equivalences} and \emph{fibrations}, respectively. The morphisms in the intersections $\cof\cap\we$ and $\fib\cap\we$ are called \emph{trivial cofibrations} and \emph{trivial fibrations}, respectively. An object $C \in \A$ is \emph{(trivially) cofibrant} if the morphism $0 \to C$ from the initial object $0 \in \A$ is a (trivial) cofibration. If $X \in \A$ is an arbitrary object, we can apply the axiom Definition~\ref{def:wfs}(3) to $0 \to X$ to obtain a trivial fibration $p\colon C(X) \to X$ with $C(X)$ cofibrant. Such $C(X)$ is called the \emph{cofibrant replacement} of $X$ and it plays the role of a suitable resolution of $X$. Dually we can define \emph{(trivially) fibrant objects} and \emph{fibrant replacements}.

In Section~\ref{sec:hom-epi-dga} we also need the homotopy relation on morphisms of dg algebras. There is in fact a general notion which works for any model category.

\begin{defn}[{\cite[\S1.2]{Hov99} or~\cite[\S7.3]{Hir}}] \label{def:htpy}
A \emph{cylinder object} in $\A$ is a factorization of the fold map $1_X+1_X\colon X \amalg X \to X$ for an object $X \in \A$ into a cofibration followed by a weak equivalence,
\[
X \amalg X \mapr{i_0+i_1} D \mapr{\tau} X
\]
Two maps $f,g\colon X \to Y$ are \emph{left homotopic} is there exists a cylinder object and a map (the left homotopy) $h\colon D \to Y$ \st $hi_0 = f$ and $hi_1 = g$. Dually one defines \emph{path objects} and the \emph{right homotopy} relation.

Two maps $f,g\colon X \to Y$ are \emph{homotopic}, $f \sim g$ in symbols, if they are both left and right homotopic.
\end{defn}

In general, the left and right homotopy relations do not agree and may not be transitive. If $f\colon X \to Y$ is a map from a cofibrant to a fibrant object, however, then the left and right homotopy on $\A(X,Y)$ \emph{do} agree and they are equivalence relations; see~\cite[Proposition 1.2.5]{Hov99}.

The localized category $\A[\we\inv]$ of a model category $\A$ is traditionally called the \emph{homotopy category} and denoted by $\Ho\A$. For the sake of completeness, we restate the following classical result. 

\begin{prop} \label{prop:model-basic}
Let $\A$ together with $(\cof,\we,\fib)$ be a model category, let $Q\colon \A \to \Ho\A$ be the canonical localization functor, and denote by $\A_{cf}$ the full subcategory of $\A$ formed by the fibrant and cofibrant objects.

\begin{enumerate}
\item If $C \in \A$ is cofibrant and $F \in \A$ is fibrant, then $Q$ induces a bijection
\[ \big(\A(C,F)/\!\sim\!\big) \to \Ho\A\big(Q(C),Q(F)\big). \]
In particular $\Ho\A\big(Q(X),Q(Y)\big) \cong \A\big(C(X),F(Y)\big)/\!\!\sim$ for any pair of objects $X,Y\in\A$.

\item The homotopy relation on morphisms of $\A_{cf}$ is compatible with composition of morphisms, and $\A_{cf} \mapr{\subseteq} \A \mapr{Q} \Ho\A$ induces an equivalence of categories $(\A_{cf}/\!\sim\!) \to \Ho\A$.

\item If $f\colon X \to Y$ is a morphism in $\A$, then $Q(f)$ is an isomorphism in $\Ho\A$ \iff $f$ is a weak equivalence. 
\end{enumerate}
\end{prop}

\begin{proof}
These facts are dating back to \cite{QHtp}. We refer to \cite[Theorem 1.2.10]{Hov99} of \cite[\S8.4]{Hir} for a proof.
\end{proof}

\begin{rem} \label{rem:morphisms-in-model-cat}
It follows from Proposition~\ref{prop:model-basic}(1) that any morphism $g\colon X \to Y$ in $\Ho\A$ can be represented by a zigzag of morphism in $\A$,
\[ \xymatrix@1{X & C(X) \ar[l] \ar[r] & F(Y) & Y \ar[l]}, \]
where the first and the last one are cofibrant and fibrant replacements, respectively. In particular, these are invertible in $\Ho\A$.

If it so happens that all objects of $\A$ are fibrant (as in Propositions~\ref{prop:model-dg-mod} and~\ref{prop:model-dgas} below), then each morphism of $\Ho\A$ is even represented by a cospan
\[
\xymatrix{
& C(X) \ar[dl]_\sigma \ar[dr]^f \\
X && Y,
}
\]
in $\A$. That is, $g = Q(f) Q(\sigma)\inv$. We will usually simply write $g=f\sigma\inv$ by slightly abusing notation.

It is also important to know when two such fractions $f\sigma\inv, f'(\sigma')\inv\colon X \to Y$, where $\sigma\colon C \to X$ and $\sigma'\colon C' \to X$ are cofibrant replacements of $X$ in $\A$, are equal in the homotopy category $\Ho\A$. Given two such fractions, $\sigma$ factors through $\sigma'$ by Definition~\ref{def:wfs}(2), using that $(\cof,\we\cap\fib)$ is a weak factorization system. Hence we get a diagram in $\A$ such that the left hand side triangle commutes and $\sigma,\sigma'$ and $\sigma''$ are weak equivalences:
\[
  \xymatrix@R=1em{
	  && C \ar[dll]_-\sigma \ar@{.>}[dd]^-{\sigma''} \ar[drr]^-f \\
	  X &&&& Y \\
	  && C' \ar[ull]^-{\sigma'} \ar[urr]_-{f'} \\
  }
\]
In particular, $f\sigma\inv = f'(\sigma')\inv$ in $\Ho\A$ \iff $f = f'\sigma''\colon C \to Y$ in $\Ho\A$. As both the maps are maps from a cofibrant to a fibrant object, we obtain the following characterization from Proposition~\ref{prop:model-basic}(1):

The fractions $f\sigma\inv$ and $f'(\sigma')\inv$ are equal in $\Ho\A$ \iff $\sigma$ factors as $\sigma = \sigma'\sigma''$ in such a way that $\sigma',\sigma''$ are weak equivalences and $f \sim f'\sigma''$ are homotopic in the model category $\A$.
\end{rem}

\subsection{Categories of complexes}
\label{subsec:catg-cpx}

Our aim is to apply the above abstract theory to dg modules or dg algebras. For this we need a good grip of categories of complexes of modules, especially over commutative rings. Our approach is inspired by (the appendices of) \cite{Av13}.

To start with, let $R$ be a ring and denote by $\Cpx R$ the category of cochain complexes of right $R$-modules. We use the cohomological indexing for components of a complex $X$,
\[ X\colon \qquad \cdots \la X^0 \mapr{\dif^0_X} X^1 \mapr{\dif^1_X} X^2 \la \mapr{\dif^2_X} X^3 \la \cdots. \]
It is well known (see~\cite[Theorem 2.3.11]{Hov99}) that $\Cpx R$ carries a model structure such that:
\begin{enumerate}
 \item Weak equivalences are the quasi-isomorphisms.
 \item Fibrations are precisely the epimorphisms in $\Cpx R$ (i.e.\ the maps of complexes which are componentwise surjective).
\end{enumerate}

In particular every object is fibrant. Moreover, cofibrations are precisely mono\-mor\-phisms with cofibrant cokernel, a cofibrant object has all components projective, and trivially cofibrant objects are precisely the projective objects in $\Cpx R$. Various names are used for cofibrant objects in this context: $K$-projective complexes~\cite{Spa88}, complexes with property~(P)~\cite{Kel94}, homotopically projective complexes~\cite{Kel98} and probably several others. We will use the term \emph{homotopically projective} here.

The \emph{unbounded derived category} of $R$, denoted by $\Der R$, is by definition the homotopy category $\Ho{\Cpx R}$, in the sense of~\cite{Hir,Hov99}, of the model category~$\Cpx R$.

Suppose now that $R,S,T$ are rings, $X_R$ is a complex of right $R$-modules, ${_RY_S}$ is a complex of $R$-$S$-bimodules and $Z_S$ is a complex of right $S$-modules. Then we can define the tensor product $X \otimes_R Y \in \Cpx S$ in the usual way. That is,
\[ (X \otimes_R Y)^i = \bigoplus_{p+q = i} X^p \otimes_R Y^q \]
and the differential $\dif^i_{X \otimes_R Y}\dd (X \otimes_R Y)^{i} \la (X \otimes_R Y)^{i+1}$ is defined using the graded Leibniz rule, so that for $x \in X^p$ and $y \in Y^q$ with $p+q = i$ we have
\begin{equation} \label{eqn:tensor-dif}
\dif_{X \otimes_R Y}(x \otimes y) = \dif_X(x) \otimes y + (-1)^p x \otimes \dif_Y(y). 
\end{equation}
It is straightforward to check that this is well-defined and that $\dif^2 = 0$.

Similarly we can define the internal Hom-functor. We define $\HOM_S(Y,Z) \in \Cpx R$ so that
\[ \HOM_S(Y,Z)^i = \prod_{p \in \bbZ} \Hom_S(Y^p,Z^{p+i}) \]
for each $i \in \bbZ$, and the differential is defined as the graded commutator. That is, if $f = (f^p)_{p \in \bbZ} \in \prod_{p \in \bbZ} \Hom_S(Y^p,Z^{p+i})$ is a collection of morphisms of $S$-modules, we put
\[ \dif_{\HOM_S(Y,Z)}(f) = \dif_Z \circ f - (-1)^i f \circ \dif_Y. \]

The usual adjunction between the $\Hom$ and tensor functors for modules extends in a completely straightforward way to an analogous isomorphism of complexes of abelian groups
\[ \HOM_S(X \otimes_R Y, Z) \cong \HOM_R(X, \HOM_S(Y,Z)), \]
which is natural in all three variables. In fact, all this can be done much more abstractly, see for instance~\cite[Appendix A]{HPS97}.

If we start with a commutative ring $k$, then the above specialize to functors
\[
\otimes_k\dd \Cpx k \times \Cpx k \la \Cpx k
\qquad \textrm{and} \qquad
\HOM_k\dd {\Cpx k}\op \times \Cpx k \to \Cpx k
\]
and provides us with a closed symmetric monoidal structure on $\Cpx k$ in the sense of~\cite[\S VII.7]{McL2}. A little care is due when defining the commutativity isomorphisms $\gamma_{X,Y}\dd X \otimes Y \to Y \otimes X$ as we need to introduce the so-called Koszul signs. If $x \in X^p$ and $y \in Y^q$, then $\gamma_{X,Y}(x \otimes y) = (-1)^{pq} y \otimes x$. The tensor unit is $k$ itself viewed as a complex concentrated in degree zero.

A \emph{dg algebra} $A$ over $k$ is defined as a monoid in $\Cpx k$ in the sense of~\cite[\S VII.3]{McL2}. Strictly speaking, we should write $(A,\mu,\eta)$ instead of just $A$, where $\mu\dd A \otimes_k A \to A$ and $\eta\dd k \to A$ are morphisms in $\Cpx k$, but we as usual view these as implicitly given. In more pedestrian terms, $A$ is a $\bbZ$-graded $k$-algebra with a differential of degree $1$ which satisfies the graded Leibniz rule \wrt multiplication. A left or right \emph{dg module} $M$ over $A$ is a complex $M \in \Cpx k$ together with a (left or right) action of $A$ in the sense of~\cite[\S VII.4]{McL2}. A \emph{dg $A$-$B$-bimodule} is a complex $M$ with left dg $A$-module and right dg $B$-module structures which are compatible via the obvious associativity $(a\cdot m)\cdot b = a\cdot (m\cdot b)$ for each $a \in A$, $m \in M$ and $b \in B$.

We denote the category of dg algebras over $k$ by $\Dga k$ and, following the notation from~\cite{Kel94}, the category of right dg modules over a given dg algebra $A$ will be denoted by $\Cpx A$. Note that if we view an ordinary $k$-algebra $R$ as a dg algebra concentrated in degree 0, then the category of dg modules over $R$ is none other than the category of complexes of $R$-modules---that is $\Cpx R$ is the same in both senses. If $A$, $B$ are dg algebras, a dg $A$-$B$-bimodule can be viewed as module over $A \otimes_k B\op$, where the multiplication in $A \otimes_k B\op$ involves the corresponding Koszul signs (see~\cite[Appendix B]{Av13}).

\subsection{A projective model structure on dg modules}
\label{subsec:model-dg-mod}

Now we shall discuss a model structure for the derived category, in the sense of~\cite{Kel94}, of a fixed dg algebra $A$. As we shall explain, an important added value of our approach here is that the manipulation with various derived functors becomes quite transparent.

\begin{prop} \label{prop:model-dg-mod}
Let $A \in \Dga k$. Then $\Cpx A$ admits model structures \st
\begin{enumerate}
 \item Weak equivalences are the quasi-isomorphisms.
 \item Fibrations are the surjective maps.
\end{enumerate}
\end{prop}

\begin{proof}
This is a special case of~\cite[Theorem 4.1]{SchSh00} with $\Cpx k$ as the underlying monoidal model category. Another incarnation can be found in \cite[Proposition 1.3.5(1)]{Beck14}.
\end{proof}

The \emph{derived category} $\Der A$ of $A$ is then by definition the homotopy category $\Ho\Cpx A$ \wrt the above model structure. Every object in this model category is fibrant and we will again call the cofibrant objects in $\Cpx A$ \emph{homotopically projective}. According to Proposition~\ref{prop:model-basic}(2), $\Der A$ is equivalent to the category of homotopically projective dg modules modulo the homotopy relation. Moreover, two maps of dg modules are homotopic in the sense of Definition~\ref{def:htpy} \iff they are homotopic in the sense of~\cite[\S2.1]{Kel94} (see~\cite[Proposition 1.1.14]{Beck14}). We thus obtain the following description for homotopically projective dg modules:

\begin{prop} \label{prop:htp-proj-dgmod}
Let $A$ be a dg algebra over $k$ and $X \in \Cpx A$ be a right dg module over $A$. Then $X$ is homotopically projective \iff $X$ is a summand in a dg module $P$ \st $P$ is the union $P = \bigcup_{i \ge 0} P_i$ of a chain
\[ 0 = P_0 \subseteq P_1 \subseteq P_2 \subseteq P_3 \subseteq \cdots \]
of dg submodules \st $P_{i+1}/P_i$ is for each $i \in \bbN$ a direct sum of copies of suspensions of $A$.
\end{prop}

\begin{proof}
The above discussion and \cite[\S3.1]{Kel94} imply that homotopically projective dg modules are precisely those `with property (P)' in the terminology of \cite[\S3.1]{Kel94}. The above description is precisely the definition of the `property (P)'. We also refer to~\cite[\S2.3]{StPo} for a more detailed argumentation.
\end{proof}

%

Typically, one does not work only with $\Der A$ of a single dg algebra $A$, but one considers functors between derived categories of various dg algebras. These functors almost always arise as total derived functors of $\Hom$ and tensor functors represented by dg bimodules.

To this end, suppose that $A,B$ are dg algebras, $X_A$ is a right dg $A$-module, ${_AY_B}$ is a dg $A$-$B$-bimodule and that $Z_B$ is a right dg $B$-module. Then we can define $X \otimes_A Y \in \Cpx B$ and $\HOM_B(Y,Z) \in \Cpx A$, extending the definitions for complexes over ordinary algebras. Forgetting the differential and the right action of $B$ for the moment, the underlying $\bbZ$-graded $k$-module of $X \otimes_A Y$ is obtained as the tensor product $X \otimes_A Y$ of graded modules over $A$ as a graded algebra. The differential is again defined by formula~(\ref{eqn:tensor-dif}). In fact, $X \otimes_A Y$ is a factor of $X \otimes_k Y$ as a right dg $B$-module, see~\cite[Exercise VII.4.6, p. 175]{McL2}. Similarly $\HOM_B(Y,Z)$ is a $k$-subcomplex of $\HOM_k(Y,Z)$ \st $\HOM_B(Y,Z)^i$ consists of degree $i$ graded $B$-module homomorphisms $Y \to Z$; see~\cite[p. 499]{SchSh00}. Again we have an adjunction (natural isomorphism in $\Cpx k$)
\[ \HOM_B(X \otimes_A Y, Z) \cong \HOM_A(X, \HOM_B(Y,Z)). \]

For a general definition of total derived functors we refer to~\cite[\S8.4]{Hir}. For our purpose, it is enough to recall the following recipe based on \cite[Proposition 8.4.8]{Hir}: Given $A \in \Dga k$, a right dg module $X_A$ and a left dg module ${_AY}$, then there are canonical isomorphisms in $\Der k$,
\[ P \otimes_A Y \cong X \Lotimes_A Y \cong X \otimes_A Q, \]
where $\Lotimes$ is the total left derived functor of $\otimes\colon \Cpx{A}\times \Cpx{A\op} \to \Cpx{k}$, and $P_A \to X_A$ and ${_AQ} \to {_AY}$ are homotopically projective replacement of dg modules. If ${_AY_B}$ is a dg $A$-$B$-bimodule, the above isomorphisms also exist in $\Der B$. The second part of the following proposition is then particularly useful when we try to evaluate the composition of two left derived tensor functors.

\begin{lem} \label{lem:htp-proj-tensor} Let $A,B$ be dg algebras over $k$. Then the following hold:
\begin{enumerate}
\item Suppose that a dg module $P_A$ can be written as a union $P = \bigcup_{i \ge 0} P_i$ of a chain of dg $A$-submodules \st $P_0 = 0$ and each $P_{i+1}/P_i$ is homotopically projective in $\Cpx A$. Then also $P$ is homotopically projective.

\item Suppose that ${_AX_B}$ is a dg $A$-$B$-bimodule and ${_BY}$ is a dg $B$-module. If both ${_AX}$ and ${_BY}$ are homotopically projective as left dg modules, then so is ${_A(X \otimes_B Y)}$. 
\end{enumerate}
\end{lem}

\begin{proof}
(1) This follows from the description of homotopically projective dg modules in~\cite[Remark 2.15]{StPo}.

(2) We know that $Y$ is a summand of $P = \bigcup_{i \ge 0} P_i$ where $P_0 = 0$ and each $P_{i+1}/P_i$ is a coproduct of copies of suspensions of ${_BB}$. Since each $P_i \subseteq P_{i+1}$ splits as a map of graded $B$-modules and the underlying graded $A$-modules of $X \otimes_B P_i$ are simply the corresponding tensor products of graded modules over $B$, the chain
\[ 0 = X \otimes_B P_0 \la X \otimes_B P_1 \la X \otimes_B P_2 \la X \otimes_B P_3 \la \cdots \]
consists of monomorphism. Indeed, all these morphisms are split monomorphisms of graded $A$-modules. Moreover, each factor $X \otimes_B P_{i+1}/P_i$ is isomorphic to a coproduct of suspensions of $X$, hence is homotopically projective. Thus, $X \otimes_B Y$ is a homotopically projective dg $A$-module by part (1).
\end{proof}

\subsection{A projective model structure on dg algebras}

There is also a model structure on the category $\Dga k$ of dg algebras over $k$ whose knowledge, however, does not seem to be so widely spread among algebraists. Now we need the power of the formalism of model categories since $\Dga k$ is far from being an additive category.

\begin{prop} \label{prop:model-dgas} \cite{Jard95,SchSh00}
Let $k$ be a commutative ring. Then $\Dga k$ admits a model structure \st
\begin{enumerate}
 \item Weak equivalences are the quasi-isomorphisms of dg algebras.
 \item Fibrations are the surjective maps of dg algebras.
\end{enumerate}
In particular every dg algebra is fibrant. Moreover, if $f\dd A \to B$ is a cofibration of dg algebras and the underlying complex of $A$ is homotopically projective in $\Cpx k$, then so is the underlying complex of $B$. If $A$ is a cofibrant dg algebra, this in particular means that the underlying complex of $A$ is homotopically projective in $\Cpx k$.
\end{prop}

\begin{proof}
The model structure comes from~\cite{Jard95}, and the statement is a specialization of~\cite[Theorem 4.1(3)]{SchSh00} (see also~\cite[pp. 503--504]{SchSh00}).
\end{proof}

The second half of the proposition is useful when dealing with homotopical epimorphisms of dg algebras since some results of \cite{NS09} can be applied only to dg algebras whose underlying complexes of $k$-modules are homotopically projective. Here we see that this is not an essential restriction since every dg algebra is quasi-isomorphic to its cofibrant replacement which has this property (the authors are grateful to Pedro Nicol\'as for explaining them this point).

To conclude our discussion, suppose that $f\dd B \to A$ is a homomorphism of dg algebras, and that $X_A$ and ${_A Y}$ are dg $A$-modules. We can also view $X$ and $Y$ as dg $B$-modules via $f$, and we can apply the left derived tensor product functors both over $A$ and over $B$. A favorable fact, useful in~\S\ref{sec:hom-epi-dga}, is that we obtain the same result as long as $A$ and $B$ are isomorphic in $\Ho{\Dga k}$.

\begin{lem} \label{lem:dga-change}
Let $f\dd B \to A$ be a homomorphism of dg algebras which is a quasi-isomorphism, and let $X_A$ and ${_A Y}$ be dg $A$-modules. Then there is a natural isomorphism $X \Lotimes_A Y \cong X \Lotimes_B Y$ in $\Der k$.
\end{lem}

\begin{proof}
The key claim is that if $p\dd P_B \to X_B$ is a homotopically projective replacement of $X$ as a dg $B$-module, then the composition of $p \otimes_B A\dd P \otimes_B A \to X\otimes_B A $ with the multiplication $\mu\dd X\otimes_B A\to X$ is a homotopically projective replacement of $X$ as a dg $A$-module. Indeed, since $p$ is surjective, so is $\mu\circ (p \otimes_B A)$, hence $\mu\circ (p \otimes_B A)$ is a fibration in $\Cpx A$. Further, $P \otimes_B A$ is homotopically projective over~$A$ by Lemma~\ref{lem:htp-proj-tensor}(2).

It remains to prove that $\mu\circ (p \otimes_B A)$ is a quasi-isomorphism. To this end, note that $p$ can be written as the composition
\[
\xymatrix{
P \ar[rr]^p \ar[d]_{P \otimes_B f} && X  \\
P \otimes_B A \ar[rr]^{p \otimes_B A} && X \otimes_B A \ar[u]_\mu. \\
}
\]
Since $p$ is a quasi-isomorphism to start with, our task is equivalent to proving that $P \otimes_B f$ is a quasi-isomorphism. However, the latter follows from the fact that $f$ is a quasi-isomorphism of left dg $B$-modules and $P_B$ is homotopically projective as a right dg $B$-module. This proves the claim.

Now we have $X  \Lotimes_B Y = P \otimes_B Y$ and 
$X \Lotimes_A Y = (P \otimes_B A) \otimes_A Y$ (see~\cite[Proposition 8.4.8]{Hir})
and the right hand sides are naturally isomorphic. 
\end{proof}

\bibliographystyle{alpha}
\bibliography{references}

\begin{thebibliography}{AHKL11}

\bibitem[AHKL11]{AKL11}
Lidia Angeleri~H{\"u}gel, Steffen Koenig, and Qunhua Liu.
\newblock Recollements and tilting objects.
\newblock {\em J. Pure Appl. Algebra}, 215(4):420--438, 2011.

\bibitem[Avr13]{Av13}
Luchezar~L. Avramov.
\newblock ({C}ontravariant) {K}oszul duality for {DG} algebras.
\newblock In {\em Algebras, quivers and representations}, volume~8 of {\em Abel
  Symp.}, pages 13--58. Springer, Heidelberg, 2013.

\bibitem[Bal05]{Bal05}
Paul Balmer.
\newblock The spectrum of prime ideals in tensor triangulated categories.
\newblock {\em J. Reine Angew. Math.}, 588:149--168, 2005.

\bibitem[BBD82]{BBD}
A.~A. Be{\u\i}linson, J.~Bernstein, and P.~Deligne.
\newblock Faisceaux pervers.
\newblock In {\em Analysis and topology on singular spaces, {I} ({L}uminy,
  1981)}, volume 100 of {\em Ast\'erisque}, pages 5--171. Soc. Math. France,
  Paris, 1982.

\bibitem[Bec14]{Beck14}
Hanno Becker.
\newblock Models for singularity categories.
\newblock {\em Adv. Math.}, 254:187--232, 2014.

\bibitem[BF11]{BaFa11}
Paul Balmer and Giordano Favi.
\newblock Generalized tensor idempotents and the telescope conjecture.
\newblock {\em Proc. Lond. Math. Soc. (3)}, 102(6):1161--1185, 2011.

\bibitem[BIK08]{BIK08}
Dave Benson, Srikanth~B. Iyengar, and Henning Krause.
\newblock Local cohomology and support for triangulated categories.
\newblock {\em Ann. Sci. \'Ec. Norm. Sup\'er. (4)}, 41(4):573--619, 2008.

\bibitem[BIK11]{BIK11}
Dave Benson, Srikanth~B. Iyengar, and Henning Krause.
\newblock Stratifying triangulated categories.
\newblock {\em J. Topol.}, 4(3):641--666, 2011.

\bibitem[BK72]{BK72}
A.~K. Bousfield and D.~M. Kan.
\newblock The core of a ring.
\newblock {\em J. Pure Appl. Algebra}, 2:73--81, 1972.

\bibitem[BK73]{BK72-corr}
A.~K. Bousfield and D.~M. Kan.
\newblock Correction to ``{T}he core of a ring'' ({J}. {P}ure {A}ppl. {A}lgebra
  {\bf 2} (1972), 73--81).
\newblock {\em J. Pure Appl. Algebra}, 3:409, 1973.

\bibitem[BO95]{BO95}
Alexey~I. Bondal and Dmitri~O. Orlov.
\newblock Semiorthogonal decomposition for algebraic varieties.
\newblock Preprint, arXiv:alg-geom/9506012v1, 1995.

\bibitem[Bou79]{Bou79}
A.~K. Bousfield.
\newblock The localization of spectra with respect to homology.
\newblock {\em Topology}, 18(4):257--281, 1979.

\bibitem[CE56]{CE56}
Henri Cartan and Samuel Eilenberg.
\newblock {\em Homological algebra}.
\newblock Princeton University Press, Princeton, N. J., 1956.

\bibitem[DHS88]{DHS88}
Ethan~S. Devinatz, Michael~J. Hopkins, and Jeffrey~H. Smith.
\newblock Nilpotence and stable homotopy theory. {I}.
\newblock {\em Ann. of Math. (2)}, 128(2):207--241, 1988.

\bibitem[DP08]{DwPa08}
W.~G. Dwyer and J.~H. Palmieri.
\newblock The {B}ousfield lattice for truncated polynomial algebras.
\newblock {\em Homology Homotopy Appl.}, 10(1):413--436, 2008.

\bibitem[EJ00]{EJ00}
Edgar~E. Enochs and Overtoun M.~G. Jenda.
\newblock {\em Relative homological algebra}, volume~30 of {\em de Gruyter
  Expositions in Mathematics}.
\newblock Walter de Gruyter \& Co., Berlin, 2000.

\bibitem[FS01]{FS01}
L{\'a}szl{\'o} Fuchs and Luigi Salce.
\newblock {\em Modules over non-{N}oetherian domains}, volume~84 of {\em
  Mathematical Surveys and Monographs}.
\newblock American Mathematical Society, Providence, RI, 2001.

\bibitem[GdlP87]{GdlP87}
P.~Gabriel and J.~A. de~la Pe{\~n}a.
\newblock Quotients of representation-finite algebras.
\newblock {\em Comm. Algebra}, 15(1-2):279--307, 1987.

\bibitem[Gil04]{G3}
James Gillespie.
\newblock The flat model structure on {${\rm Ch}(R)$}.
\newblock {\em Trans. Amer. Math. Soc.}, 356(8):3369--3390 (electronic), 2004.

\bibitem[GL91]{GL91}
Werner Geigle and Helmut Lenzing.
\newblock Perpendicular categories with applications to representations and
  sheaves.
\newblock {\em J. Algebra}, 144(2):273--343, 1991.

\bibitem[Gla89]{Glaz89}
Sarah Glaz.
\newblock {\em Commutative coherent rings}, volume 1371 of {\em Lecture Notes
  in Mathematics}.
\newblock Springer-Verlag, Berlin, 1989.

\bibitem[Gla05]{Glaz05}
Sarah Glaz.
\newblock Pr\"ufer conditions in rings with zero-divisors.
\newblock In {\em Arithmetical properties of commutative rings and monoids},
  volume 241 of {\em Lect. Notes Pure Appl. Math.}, pages 272--281. Chapman \&
  Hall/CRC, Boca Raton, FL, 2005.

\bibitem[GZ67]{GZ67}
P.~Gabriel and M.~Zisman.
\newblock {\em Calculus of fractions and homotopy theory}.
\newblock Ergebnisse der Mathematik und ihrer Grenzgebiete, Band 35.
  Springer-Verlag New York, Inc., New York, 1967.

\bibitem[Hir03]{Hir}
Philip~S. Hirschhorn.
\newblock {\em Model categories and their localizations}, volume~99 of {\em
  Mathematical Surveys and Monographs}.
\newblock American Mathematical Society, Providence, RI, 2003.

\bibitem[Hov99]{Hov99}
Mark Hovey.
\newblock {\em Model categories}, volume~63 of {\em Mathematical Surveys and
  Monographs}.
\newblock American Mathematical Society, Providence, RI, 1999.

\bibitem[Hov02]{Hov02}
Mark Hovey.
\newblock Cotorsion pairs, model category structures, and representation
  theory.
\newblock {\em Math. Z.}, 241(3):553--592, 2002.

\bibitem[Hov07]{Hov07}
Mark Hovey.
\newblock Cotorsion pairs and model categories.
\newblock In {\em Interactions between homotopy theory and algebra}, volume 436
  of {\em Contemp. Math.}, pages 277--296. Amer. Math. Soc., Providence, RI,
  2007.

\bibitem[HPS97]{HPS97}
Mark Hovey, John~H. Palmieri, and Neil~P. Strickland.
\newblock Axiomatic stable homotopy theory.
\newblock {\em Mem. Amer. Math. Soc.}, 128(610):x+114, 1997.

\bibitem[Isb69]{Isb69}
John~R. Isbell.
\newblock Epimorphisms and dominions. {IV}.
\newblock {\em J. London Math. Soc. (2)}, 1:265--273, 1969.

\bibitem[Jar97]{Jard95}
J.~F. Jardine.
\newblock A closed model structure for differential graded algebras.
\newblock In {\em Cyclic cohomology and noncommutative geometry ({W}aterloo,
  {ON}, 1995)}, volume~17 of {\em Fields Inst. Commun.}, pages 55--58. Amer.
  Math. Soc., Providence, RI, 1997.

\bibitem[Kel94a]{Kel94}
Bernhard Keller.
\newblock Deriving {DG} categories.
\newblock {\em Ann. Sci. \'Ecole Norm. Sup. (4)}, 27(1):63--102, 1994.

\bibitem[Kel94b]{Kel94-smash}
Bernhard Keller.
\newblock A remark on the generalized smashing conjecture.
\newblock {\em Manuscripta Math.}, 84(2):193--198, 1994.

\bibitem[Kel98]{Kel98}
Bernhard Keller.
\newblock On the construction of triangle equivalences.
\newblock In {\em Derived equivalences for group rings}, volume 1685 of {\em
  Lecture Notes in Math.}, pages 155--176. Springer, Berlin, 1998.

\bibitem[Kel07]{Kel07}
Bernhard Keller.
\newblock Derived categories and tilting.
\newblock In {\em Handbook of tilting theory}, volume 332 of {\em London Math.
  Soc. Lecture Note Ser.}, pages 49--104. Cambridge Univ. Press, Cambridge,
  2007.

\bibitem[KP15]{KoPi13}
Joachim Kock and Wolfgang Pitsch.
\newblock Hochster duality in derived categories and point-free reconstruction
  of schemes.
\newblock Preprint, arXiv:1305.1503v5, 2015.

\bibitem[Kra00]{Kr00}
Henning Krause.
\newblock Smashing subcategories and the telescope conjecture---an algebraic
  approach.
\newblock {\em Invent. Math.}, 139(1):99--133, 2000.

\bibitem[Kra05]{Kr05-telescope}
Henning Krause.
\newblock Cohomological quotients and smashing localizations.
\newblock {\em Amer. J. Math.}, 127(6):1191--1246, 2005.

\bibitem[Kra07]{Kr07}
Henning Krause.
\newblock Derived categories, resolutions, and {B}rown representability.
\newblock In {\em Interactions between homotopy theory and algebra}, volume 436
  of {\em Contemp. Math.}, pages 101--139. Amer. Math. Soc., Providence, RI,
  2007.

\bibitem[Kra10]{Kr10}
Henning Krause.
\newblock Localization theory for triangulated categories.
\newblock In {\em Triangulated categories}, volume 375 of {\em London Math.
  Soc. Lecture Note Ser.}, pages 161--235. Cambridge Univ. Press, Cambridge,
  2010.

\bibitem[K{\v{S}}10]{KrSt}
Henning Krause and Jan {\v{S}}{\v{t}}ov{\'{\i}}{\v{c}}ek.
\newblock The telescope conjecture for hereditary rings via {E}xt-orthogonal
  pairs.
\newblock {\em Adv. Math.}, 225(5):2341--2364, 2010.

\bibitem[Laz69]{Laz69}
Daniel Lazard.
\newblock Autour de la platitude.
\newblock {\em Bull. Soc. Math. France}, 97:81--128, 1969.

\bibitem[Maz68]{Maz68}
Pierre Mazet.
\newblock Caract\'erisation des \'epimorphismes par relations et
  g\'en\'erateurs.
\newblock In {\em Les \'epimorphismes d'anneaux}, Sem. P. Samuel (2), pages
  A2.01--A2.08. Paris, 1968.

\bibitem[Mit73]{Mitch73}
Barry Mitchell.
\newblock The cohomological dimension of a directed set.
\newblock {\em Canad. J. Math.}, 25:233--238, 1973.

\bibitem[ML98]{McL2}
Saunders Mac~Lane.
\newblock {\em Categories for the working mathematician}, volume~5 of {\em
  Graduate Texts in Mathematics}.
\newblock Springer-Verlag, New York, second edition, 1998.

\bibitem[Nee92a]{Nee92}
Amnon Neeman.
\newblock The chromatic tower for {$D(R)$}.
\newblock {\em Topology}, 31(3):519--532, 1992.
\newblock With an appendix by Marcel B{\"o}kstedt.

\bibitem[Nee92b]{Nee92-loc}
Amnon Neeman.
\newblock The connection between the {$K$}-theory localization theorem of
  {T}homason, {T}robaugh and {Y}ao and the smashing subcategories of
  {B}ousfield and {R}avenel.
\newblock {\em Ann. Sci. \'Ecole Norm. Sup. (4)}, 25(5):547--566, 1992.

\bibitem[Nee00]{Nee00-oddball}
Amnon Neeman.
\newblock Oddball {B}ousfield classes.
\newblock {\em Topology}, 39(5):931--935, 2000.

\bibitem[Nee01]{Nee01}
Amnon Neeman.
\newblock {\em Triangulated categories}, volume 148 of {\em Annals of
  Mathematics Studies}.
\newblock Princeton University Press, Princeton, NJ, 2001.

\bibitem[NS09]{NS09}
Pedro Nicol{\'a}s and Manuel Saor{\'{\i}}n.
\newblock Parametrizing recollement data for triangulated categories.
\newblock {\em J. Algebra}, 322(4):1220--1250, 2009.

\bibitem[NS13]{NS13}
Pedro Nicol{\'a}s and Manuel Saor{\'{\i}}n.
\newblock Pseudoepimorphisms and semiorthogonal decompositions.
\newblock Private communication, 2013.

\bibitem[Pau09]{Pauk09}
David Pauksztello.
\newblock Homological epimorphisms of differential graded algebras.
\newblock {\em Comm. Algebra}, 37(7):2337--2350, 2009.

\bibitem[Qui67]{QHtp}
Daniel~G. Quillen.
\newblock {\em Homotopical algebra}.
\newblock Lecture Notes in Mathematics, No. 43. Springer-Verlag, Berlin, 1967.

\bibitem[Rav84]{Rav84}
Douglas~C. Ravenel.
\newblock Localization with respect to certain periodic homology theories.
\newblock {\em Amer. J. Math.}, 106(2):351--414, 1984.

\bibitem[Sch85]{Scho85}
A.~H. Schofield.
\newblock {\em Representation of rings over skew fields}, volume~92 of {\em
  London Mathematical Society Lecture Note Series}.
\newblock Cambridge University Press, Cambridge, 1985.

\bibitem[Sch05]{Sch12}
Karl Schwede.
\newblock Gluing schemes and a scheme without closed points.
\newblock In {\em Recent progress in arithmetic and algebraic geometry}, volume
  386 of {\em Contemp. Math.}, pages 157--172. Amer. Math. Soc., Providence,
  RI, 2005.

\bibitem[Sil67]{Sil67}
L.~Silver.
\newblock Noncommutative localizations and applications.
\newblock {\em J. Algebra}, 7:44--76, 1967.

\bibitem[{\v{S}}P16]{StPo}
Jan {\v{S}}{\v{t}}ov{\'{\i}}{\v{c}}ek and David Posp{\'{\i}}{\v{s}}il.
\newblock On compactly generated torsion pairs and the classification of
  co-{$t$}-structures for commutative noetherian rings.
\newblock {\em Trans. Amer. Math. Soc.}, 368(9):6325--6361, 2016.

\bibitem[Spa88]{Spa88}
N.~Spaltenstein.
\newblock Resolutions of unbounded complexes.
\newblock {\em Compositio Math.}, 65(2):121--154, 1988.

\bibitem[SS00]{SchSh00}
Stefan Schwede and Brooke~E. Shipley.
\newblock Algebras and modules in monoidal model categories.
\newblock {\em Proc. London Math. Soc. (3)}, 80(2):491--511, 2000.

\bibitem[Ste75]{Sten75}
Bo~Stenstr{\"o}m.
\newblock {\em Rings of quotients}.
\newblock Springer-Verlag, New York, 1975.
\newblock Die Grundlehren der Mathematischen Wissenschaften, Band 217, An
  introduction to methods of ring theory.

\bibitem[Ste14]{Ste14}
Greg Stevenson.
\newblock Derived categories of absolutely flat rings.
\newblock {\em Homology Homotopy Appl.}, 16(2):45--64, 2014.

\bibitem[{\v{S}}{\v{t}}o14]{St14}
Jan {\v{S}}{\v{t}}ov{\'{\i}}{\v{c}}ek.
\newblock Exact model categories, approximation theory, and cohomology of
  quasi-coherent sheaves.
\newblock In David~J. Benson, Henning Krause, and Andrzej Skowro{\'n}ski,
  editors, {\em Advances in Representation Theory of Algebras ({C}onf. {ICRA}
  {B}ielefeld, {G}ermany, 8-17 {A}ugust, 2012)}, EMS Series of Congress
  Reports, pages 297--367. EMS Publishing House, Z{\"u}rich, 2014.

\bibitem[Tho97]{Th97}
R.~W. Thomason.
\newblock The classification of triangulated subcategories.
\newblock {\em Compositio Math.}, 105(1):1--27, 1997.

\end{thebibliography}

\end{document}